\newcommand{\ns}{\!\!}
\newcommand{\nss}{\ns\ns}
\newcommand{\qqquad}{\qquad\qquad}
\newcommand{\qqqquad}{\qqquad\qqquad}
\newcommand{\set}[1]{\mathcal{#1}}
\newcommand{\class}[1]{\mathscr{#1}}
\newcommand{\bs}{\setminus}
\newcommand{\haus}{\mathscr{H}}
\newcommand{\nats}{\mathbb{N}}
\newcommand{\whls}{\mathbb{W}}
\newcommand{\re}{\mathbb{R}}
\newcommand{\ren}{\re^n}
\newcommand{\reo}{\ov{\re}}
\newcommand{\bll}{\set{B}}
\newcommand{\cnt}{\class{C}}
\newcommand{\smfrac}[2]{{\textstyle{\frac{#1}{#2}}}}
\newcommand{\slint}[1]{\int_{#1}}
\newcommand{\sluint}[2]{\int_{#1}^{#2}}
\newcommand{\lint}[1]{\int\limits_{#1}}
\newcommand{\fslint}[1]{\fint_{#1}}
\newcommand{\flint}[1]{\fint\limits_{#1}}
\newcommand{\dd}{\mathrm{d}}
\newcommand*\bdot{\mathpalette\bdot@{.65}}
\newcommand*\bdot@[2]{\mathbin{\vcenter{\hbox{\scalebox{#2}{$\m@th#1\bullet$}}}}}
\newcommand*\bddot{\mathpalette\bddot@{.65}}
\newcommand*\bddot@[2]{\mathbin{\vcenter{\hbox{\scalebox{#2}
    {$\m@th#1\smash{{}_{\bullet}^{\bullet}}$}}}}}
\newcommand{\circled}[2][]{%
  \tikz[baseline=(char.base)]{%
    \node[shape = circle, draw, inner sep = .5pt]
    (char) {\phantom{\ifblank{#1}{#2}{#1}}};%
    \node at (char.center) {\makebox[0pt][c]{#2}};}}
\newcommand{\ve}[1]{\vectorsym{#1}}
\newcommand{\vx}{\ve{x}}
\newcommand{\vy}{\ve{y}}
\newcommand{\vz}{\ve{z}}
\newcommand{\lp}{\left(}
\newcommand{\rp}{\right)}
\newcommand{\lc}{\left\{}
\newcommand{\rc}{\right\}}
\newcommand{\lt}{\left.}
\newcommand{\rt}{\right.}
\def\loc{\operatorname{loc}}
\def\dist{\operatorname{dist}}
\def\diam{\operatorname{diam}}
\def\card{\operatorname{card}}
\newcommand{\dplus}{\mathbin{+\mkern-10mu+}}
\newcommand{\nd}{\quad\text{ and }\quad}
\newcommand{\frl}{\quad\text{ for all }}
\newcommand{\ov}[1]{\overline{#1}}
\newcommand{\ul}[1]{\underline{#1}}
\newcommand{\vep}{\varepsilon}
\newcommand{\wh}[1]{\widehat{#1}}
\newcommand{\supth}[1]{#1^\text{th}}
\newcommand\reallywidecheck[1]{%
\savestack{\tmpbox}{\stretchto{%
  \scaleto{%
    \scalerel*[\widthof{\ensuremath{#1}}]{\kern-.6pt\bigwedge\kern-.6pt}%
    {\rule[-\textheight/2]{1ex}{\textheight}}
  }{\textheight}%
}{0.5ex}}%
\stackon[1pt]{#1}{\scalebox{-1}{\tmpbox}}%
}
\newcommand{\dom}{\Omega}
\newcommand{\bnd}{\Gamma}
\newcommand{\vei}{\ve{i}}
\newcommand{\ovx}{\ov{\vx}}
\newcommand{\ovy}{\ov{\vy}}
\newcommand{\ovz}{\ov{\vz}}
\theoremstyle{plain}
\newtheorem{theorem}{Theorem}[section]
\newtheorem{lemma}[theorem]{Lemma}
\newtheorem{corollary}[theorem]{Corollary}
\theoremstyle{definition}
\newtheorem{definition}{Definition}[section]
\newtheorem{example}{Example}[section]
\theoremstyle{remark}
\newtheorem{remark}{Remark}
\title{Traces on General Sets in $\ren$ for Functions with no Differentiability Requirements\thanks{The author's research is supported by the NSF award DMS-1716790}}
\author{Mikil Foss\thanks{203 Avery Hall,
    Lincoln, NE 68588-0130 USA, University of Nebraska-Lincoln (mfoss3@unl.edu)}}
\begin{document}

\maketitle

\begin{abstract}
   This paper is concerned with developing a theory of traces for functions that are integrable but need not possess any differentiability within their domain. Moreover, the domain can have an irregular boundary with cusp-like features and codimension not necessarily equal to one, or even an integer. Given $\dom\subseteq\ren$ and $\bnd\subseteq\partial\dom$, we introduce a function space $\class{N}^{s(\cdot),p}(\dom)\subseteq L^p_{\loc}(\dom)$ for which a well-defined trace operator can be identified. Membership in $\class{N}^{s(\cdot),p}(\dom)$ constrains the oscillations in the function values as $\bnd$ is approached, but does not imply any regularity away from $\bnd$. Under connectivity assumptions between $\dom$ and $\bnd$, we produce a linear trace operator from $\class{N}^{s(\cdot),p}(\dom)$ to the space of measurable functions on $\bnd$. The connectivity assumptions are satisfied, for example, by all $1$-sided nontangentially accessible domains. If $\bnd$ is upper Ahlfors-regular, then the trace is a continuous operator into a Sobolev-Slobodeckij space. If $\bnd=\partial\dom$ and is further assumed to be lower Ahlfors-regular, then the trace exhibits the standard Lebesgue point property. To demonstrate the generality of the results, we construct $\dom\subseteq\re^2$ with a $t>1$-dimensional Ahlfors-regular $\bnd\subseteq\partial\dom$ satisfying the main domain hypotheses, yet $\bnd$ is nowhere rectifiable and for every neighborhood of every $\ovx\in\bnd$, there exists a boundary point within that neighborhood that is only tangentially accessible.\\[5pt]
 {\it Keywords}:
 Nonlocal function spaces, Trace operator, Higher codimensional boundaries, Ahlfors-regular boundaries\\[5pt]
 {\it AMS2010}:
 35A23, 46E35, 47G10

 \end{abstract}


\section{Introduction}
\subsection{Overview}\label{SS:Overview}
Suppose that $\dom\subseteq\ren$ is an open, not necessarily bounded, set and that $u\in\cnt(\dom)$ is uniformly continuous. Though the boundary $\partial\dom$ is not in the domain of $u$, there is a natural choice for a trace function $Tu\in\cnt(\partial\dom)$ that can be identified as the values of $u$ on $\partial\dom$, since $u$ has a continuous extension to $\ov{\dom}$. In the context of Sobolev spaces, where $u$ is not necessarily continuous in $\dom$, the well-known Gagliardo's theorem~\cite{Gag:57a} states that, with a sufficiently regular boundary, this trace operator can be extended from the space of functions with uniformly continuous derivatives to a continuous linear operator $T:W^{1,p}(\dom)\to W^{1-\frac{1}{p},p}(\partial\dom)$, for each $1<p<\infty$. Here $0<\beta<1$ and $W^{\beta,p}(\partial\dom)$ is the standard Sobolev–Slobodeckij space (\cref{D:SobSlo}). Generalizations to Sobolev-Slobodeckij and Besov spaces, traces on general closed subsets of $\ren$, Sobolev and Besov spaces on metric spaces, Sobolev spaces with variable exponent, etc. have since been produced (see~\cite{DieHar:11a,JonWal:84a,Mar:18a,Mar:87a,SakSot:17a}). This paper is concerned with establishing analogous trace results compatible with a nonlocal framework, where only $p$-integrability is assumed away from the boundary.

There has been a surge of interest in developing and analyzing models that employ a nonlocal operator. Such models can incorporate long-range interactions and multiple scales and can expand the space of admissible solutions to permit singular and discontinuous attributes. Nonlocal models have been successfully employed in a wide variety of contexts, including image processing~\cite{GilOsh:09a,KatFoi:10a}, population and flocking models~\cite{CovDup:07a,ShyTad:19a}, diffusion~\cite{AndMazRos:10a}, phase transitions~\cite{BatHan:05a,GalGioGras:17a}, and material deformation with failure in peridynamics models~\cite{Sil:00a,Sil:18a}. For a comprehensive introduction to nonlocal modeling and their analysis, we refer to the monograph~\cite{Du:19a}.

A common class of nonlocal operators have a convolution, or cross-correlation, like structure. At each point $\vx\in\dom$, the operator uses an integral and an integrable kernel to accumulate weighted data for a function over a neighborhood of $\vx$. This makes them generally insensitive to function values on sets of zero measure and, in particular, to function irregularities across sets of dimension less than $n$. With an appropriate kernel, however, the operator can approximate a differential operator and provide information about the rate of change of a function.

The current work contributes to the rigorous development of a framework in which convolution-like operators, with integrable kernels, can incorporate data on sets with positive codimension. In addition to mathematical interest, there are two primary motivations for these efforts. Both are related to the fact that at points where the support of the kernel extends outside of $\dom$, the evaluation of the operator requires function values outside of $\dom$. Thus, for associated nonlocal problems, the analogue of a Dirichlet-type boundary condition is a volume-constraint, where the value of a solution is prescribed on a region of positive measure in $\ren\bs\dom$~\cite{DuGunLeh:11a,Ros:16a}. It can be a nontrivial issue to identify appropriate volume-constraints when data on only a lower-dimensional set $\bnd\subseteq\partial\dom$ is readily available. If the integral operator is responsive to function behavior on $\bnd$, then one can instead formulate nonlocal problems subject to classical Dirichlet-type boundary conditions. This also facilitates seamless transitions between nonlocal and local system descriptions. There is substantially more computational expense to numerically solve nonlocal equations when compared to their local counterpart. With a nonlocal operator that ``localizes'' as $\bnd$ is approached, one can couple a computationally efficient local model to a nonlocal model that confined to a region where it is essential to employ nonlocal operators~\cite{DelLiSel:19a,TaoTiaDu:19a}. The set $\bnd$ is a lower-dimensional interface between these two regions. In both contexts, it is essential to have a trace theory to ensure well-posedness and mathematical consistency.





In~\cite{TiaDu:17a}, Du and Tian produced some of the first trace results for functions in the nonlocal setting, with operators that have an integrable kernel that concentrates near the boundary. They considered the space $\class{S}(\dom)\subseteq L^2(\dom)$ consisting of functions satisfying $\|u\|^2_{\class{S}(\dom)}:=\|u\|^2_{L^2(\dom)}+|u|^2_{\class{S}(\dom)}<\infty$,
where $\dom\subseteq\ren$ is an open Lipschitz domain and
\[
    |u|^2_{\class{S}(\dom)}
    :=
    \int_\dom\fslint{\Psi(\vx)}
    \lp\frac{|u(\vy)-u(\vx)|}{d_{\partial\dom(\vx)}}\rp^2\dd\vy\dd\vx
    \footnote{For the purposes of comparison, this is a simplified norm equivalent to the one defined in~\cite{TiaDu:17a}.}.
\]
Here
\[
    \fslint{E}v(\vx)\dd\vx:=\frac{1}{|E|}\slint{E}v(\vx)\dd\vx,
    \frl E\in\class{B}(\ren),\text{ with }|E|>0\text{ and }v\in L^1(E),
\]
\[
    d_{\partial\dom}(\vx):=\inf_{\ovx\in\partial\dom}\|\ovx-\vx\|_{\ren},
    \nd
    \Psi(\vx):=\bll_{\frac{1}{2}d_{\partial\dom}(\vx)}(\vx),
    \frl\vx\in\ren,
\]
where $\class{B}(\ren)$ denotes the family of Borel subsets of $\ren$, $|E|$ is the Lebesgue measure of $E$, and $\bll_\rho(\vx)$ is the open ball centered at $\vx$ with radius $\rho>0$. The kernel, in the semi-norm $|\cdot|_{\class{S}(\dom)}$, can be identified as $\vx\mapsto d_{\partial\dom}(\vx)^{-n-2}|\bll_\frac{1}{2}|^{-1}\chi_{\Psi(\vx)}$. Since $d_{\partial\dom}$ is uniformly positive on any open $\dom'$ compactly contained in $\dom$, one can only expect $u\in L^2(\dom')$. Thus, the space $\class{S}(\dom)$ includes functions that have no regularity, beyond square-integrability, away from the $\partial\dom$. As the $\partial\dom$ is approached, however, the kernel concentrates sufficiently strongly that the behavior of $u$ at $\partial\dom$ contributes to $|\cdot|_{\class{S}(\dom)}$. In fact, the singularity is strong enough that $|\cdot|_{\class{S}(\dom)}$ is sensitive to oscillations in $u$ at the boundary and $\|u\|_{\class{S}(\dom)}<\infty$ implies that there is a well-defined trace $Tu\in L^2(\partial\dom)$ and, moreover, that $Tu\in W^{\frac{1}{2},2}(\partial\dom)$. These results have recently been generalized to exponents $1\le p<\infty$ in~\cite{DuMenTia:20a}.

In this paper we vastly expand the class of functions and domains, for which a well-defined trace can be identified for a given $\bnd\in\class{B}(\partial\dom)$. Let $1\le p<\infty$ and $s\in L^\infty(\dom)$ be given. For each $u\in L^1(\dom)$, define
\begin{equation}\label{D:nuDef}
  \nu^{s(\cdot),p}(E;u)
  :=
  \slint{\dom\cap E}\lp\fslint{\Psi(\vx)}
    \frac{|u(\vy)-u(\vx)|}{d_{\partial\dom}(\vx)^{s(\vx)}}\dd\vy\rp^p\dd\vx,
  \frl E\in\class{B}(\ren).
\end{equation}
The focus of this paper is on developing a trace theory for functions belonging to the space
\[
  \class{N}^{s(\cdot),p}(\dom)
  :=
  \lc u\in L^1(\dom):\nu^{s(\cdot),p}(\dom;u)<\infty
  \rc.
\]
A straightforward argument shows that $|\cdot|_{\class{N}^{s(\cdot),p}(\dom)}:\class{N}^{s(\cdot),p}(\dom)\to[0,\infty)$, given by $|u|^p_{\class{N}^{s(\cdot),p}(\dom)}:=\nu^{s(\cdot),p}(\dom;u)$, provides a semi-norm. We easily see that if for some $0<\beta'<1$, we have $s\ge\beta'$ throughout $\dom$, then $W^{\beta',p}(\dom)\subseteq B^{1,p}_{\beta'}(\dom)\subseteq\class{N}^{s(\cdot),p}(\dom)$, with $B^{1,p}_{\beta'}(\dom)$ is a Besov space, as defined in~\cite{JonWal:84a}. Moreover, the spaces introduced in~\cite{DuMenTia:20a,TiaDu:17a} are subspaces of the corresponding space $\class{N}^{s(\cdot),p}$, with $s$ constant. (For example, $\class{S}\subseteq\class{N}^{1,2}$.) In fact, \cref{E:StrictContain} shows that, in general, the containment is strict. Thus, many results currently available in the literature can be viewed as corollaries of the trace theorems established in this paper.

With $\bnd\in\class{B}(\partial\dom)$, the main results of the paper provide assumptions on $\dom$, $\bnd$, and $s$ (collected in Section~\ref{SS:Assumptions}) that ensure {\it the existence of a continuous linear trace operator} $T:\class{N}^{s(\cdot),p}(\dom)\to W^{\beta,p}(\bnd)$, for some $0<\beta<1$. The primary assumptions on $\dom$ and $\bnd$ are (H1) a corkscrew-type condition, (H2) a uniform-connectedness condition, (H3) a constraint on the oscillations of $u$ near $\bnd$, and (H4) Ahlfors-regularity of $\bnd$. Loosely speaking, the oscillation constraint is satisfied if $s(\vx)\ge a(\ovx)+\beta+\frac{n-\dim(\bnd)}{p}$, for $\vx\in\dom$ near $\ovx\in\bnd$. The values of $a(\ovx)>0$ depend, in an explicit way, on the approachability of $\ovx$ from $\dom$. Following some definitions recorded in Section~\ref{SS:Definitions}, precise statements of the main assumptions and theorems are given in Sections~\ref{SS:Assumptions} and~\ref{SS:MainThms}.

The distinguishing features of this work are the following:
\begin{itemize}
\item {\bf We provide trace results allowing ``very rough'' boundaries.} In~\cite{DuMenTia:20a} and~\cite{TiaDu:17a}, the set $\bnd$ is required to be Lipschitz, so that an atlas of Lipschitz transforms are available to ``flatten out'' $\bnd$. They connect the rate of change of a function in directions parallel to the flattened boundary to the rate of change in the normal direction. In this paper, a different approach is presented, based on a continuous extension of a function~\eqref{D:gMeanFun} related to the mean values of $u$. The corkscrew and connectedness properties ensure there is a region of approach to points in $\bnd$ that allows us to work directly with $\bnd$, without any transformations, and allow $\bnd$ to have minimal regularity.

\hspace{17pt}In fact, identifying a natural candidate for a measurable trace does not require {\it any} assumptions on $\dom$ and $\bnd$ beyond (H1) and (H2). Assuming upper Ahlfors-regularity, we prove $Tu\in W^{\beta,p}(\bnd)$. If, additionally, there is a lower Ahlfors-regular neighborhood in $\bnd$, then we can establish the Lesbesgue point property for $Tu$ within that neighborhood; i.e., as $\rho\to0^+$, the mean values of $u$ over $\dom\cap\bll_\rho(\ovx)$ converge in the $p$-norm to $Tu(\ovx)$, for a.e. $\ovx\in\bnd$. Thus, with respect to the surface measure, $Tu$ agrees a.e. on $\bnd$ with the strictly defined function associated with $u$.

\item {\bf Within $\dom$, the functions need only be $p$-integrable.} Given a function $u\in\class{N}^{s(\cdot),p}(\dom)$ and an open set $\dom'$ compactly contained in $\dom$, Jensen's inequality implies
\[
    \slint{\dom'}\!|u(\vx)|^p\dd\vx
    \le
    c\slint{\dom'}\!\lp\fslint{\Psi(\vx)}\ns|u(\vy)-u(\vx)|\dd\vy\rp^p\dd\vx\\
    +c\slint{\dom'}\!\lp\fslint{\Psi(\vx)}\ns|u(\vy)|\dd\vy\rp^p\dd\vx
    <\infty,
\]
since $u\in L^1(\dom)$ and $d_{\partial\dom}(\vx)$ is uniformly positive in $\dom'$. Hence, $\class{N}^{s(\cdot),p}(\dom)\subseteq L^p_{\loc}(\dom)$. As with the space $\class{S}(\dom)$, however, no additional regularity can be expected for $u$ in $\dom'$, regardless of the behavior of $s$ in $\dom'$. This makes $\class{N}^{s(\cdot),p}$ a viable solution space for nonlocal systems where even discontinuous functions are admissible. For models of phenomena exhibiting sharp transitions or jumps, it is critical to include irregular functions as solution candidates. Moreover, functions in this space possess well-defined fine properties as we approach $\bnd$.

\item {\bf The traces are captured on possibly ``very thin or porous'' sets.} The set $\bnd$ can have non-integer Hausdorff dimension, any positive codimension, and may also possess cusp-like features. To demonstrate how irregular the domain can be, we produce an $\dom\subseteq\re^2$, with a nonrectifiable Ahlfors-regular self-similar set $\bnd\in\class{B}(\partial\dom)$ that has Hausdorff dimension $1<t<2$ and has the following property: for every $\ovx\in\bnd$ and $\rho>0$, there exists a $\ovy\in\bnd\cap\bll_\rho(\ovx)$ that is only tangentially accessible. In other words, the cone of directions in the approach region for $\ovy$ degenerates as it is approached. Nevertheless, provided the oscillation constraint is satisfied, there is a trace $Tu\in W^{\beta,p}(\bnd)$ possessing the Lebesgue point property (see \cref{E:Domains}(\ref{E:Prickly}), \cref{R:ThmRmks}(\ref{R:PricklyExample}), and Section~\ref{S:Appendix}). We mention the recent interest in developing a theory for elliptic problems with higher-codimensional boundaries~\cite{DavFenMay:19a,MayGuyFen:18a,FenMayZha:18a,MayZih:19a} and trace theorems and boundary value problems on fractal sets~\cite{AchSabTch:06a,AchTch:10a,Cap:13a}.


\end{itemize}

\subsection{Definitions}\label{SS:Definitions}

A more detailed presentation of the main theorems requires some additional definitions. For each $\rho>0$, $E\subseteq\ren$, and $\vx\in\ren$, define $E_\rho(\vx):=E\cap\bll_\rho(\vx)$. Given $\tau\ge0$ and $E\in\class{B}(\ren)$, we use $\haus^\tau(E)$ to denote the $\tau$-dimensional Hausdorff measure of $E$. We use $L(E)$ for the space of Borel-measurable functions. In general, by measurable, we mean Borel-measurable. We now introduce the two main geometric properties needed throughout the paper. A discussion of these definitions with accompanying figures is provided in the next section, following the main assumptions.
\begin{definition}
With $0<\eta<1\le\theta<\infty$, we will say that $Q\subseteq\dom$ is an \emph{$(\eta,\theta)$-corkscrew region} for $\ovx\in\partial\dom$ if there exists a $\delta_0>0$ such that, for each $0<\delta\le\delta_0$, there exists $\vx\in Q$ satisfying
\[
    \eta\delta<\|\ovx-\vx\|_{\ren}<\delta
    \nd
    d_{\partial\dom}(\vx)>(\eta\delta)^\theta.
\]
We refer to $\delta_0$ as the \emph{radius of $Q$}.
\end{definition}
\begin{definition}
With $C,\theta\ge 1$, we will say that $E\subseteq\dom$ is \emph{$(C,\theta)$-connected} to $\ovx\in\partial\dom$ if there exists $\rho_0>0$ with the following property: for each $0<\lambda<1$, there exists $0<\vep_\lambda\le\lambda$ such that for any $0<\rho\le\rho_0$, if
\[
    \vx,\vx'\in E_\rho(\ovx)
    \nd
    d_{\partial\dom}(\vx),d_{\partial\dom}(\vx')\ge(\lambda\rho)^\theta,
\]
then there exists a rectifiable path $\ve{\gamma}:[0,1]\to E$ between $\vx$ and $\vx'$ such that
\begin{equation}\label{H:PathProp}
    \haus^1(\ve{\gamma}([0,1]))\le C\rho
    \nd
    d_{\partial\dom}(\ve{\gamma}(\tau))\ge\vep_\lambda\rho^\theta,
    \frl\tau\in[0,1].
\end{equation}
\end{definition}
In the definition above, without loss of generality, we assume $\vep_{\lambda}\le\vep_{\lambda'}$, if $\lambda\le\lambda'$. Next we recall the definition of the Sobolev-Slobodeckij spaces.
\begin{definition}\label{D:SobSlo}
Let $E\in\class{B}(\ren)$ be a $\haus^\tau$-dimensional set. For each $1\le p<\infty$ and $\beta>0$, define $|\cdot|_{W^{\beta,p}(E)}:L(E)\to[0,\infty]$ by
\[
    |u|^p_{W^{\beta,p}(E)}
    :=
    \slint{E}\slint{E}\frac{|u(\vy)-u(\vx)|^p}{\|\vy-\vx\|^{\tau+\beta p}_{\ren}}
        \dd\haus^\tau(\vy)\dd\haus^\tau(\vx)
\]
and the \emph{Sobolev-Slobodeckij space}
\[
    W^{\beta,p}(E):=\lc u\in L^p(E):|u|_{W^{\beta,p}(E)}<\infty\rc.
\]
\end{definition}
If $E\subseteq\ren$ is a closed set and $0<\beta<1$, then our definition of $W^{\beta,p}(E)$ corresponds to the definition of the Besov space $B^{p,p}_{\beta}(E)$ given in~\cite{Jon:94a} (see also~\cite{JonWal:84a}). We note that, in general, \cref{D:SobSlo} does not provide the standard Sobolev or Sobolev-Slobodeckij spaces when $\beta\ge 1$. In fact, if $E$ is an open connected set, $\beta\ge1$, and $u\in W^{\beta,p}(E)$, as defined above, then $u$ is a constant function~\cite{Bre:02a,MarMar:08a}. It appears to be unknown whether this is also true for $E$ not open.

Finally, we need a bit more notation. For each $0<\lambda<1\le\theta$ and $\ovx\in\partial\dom$, set
\[
    Q^\theta_\lambda(\ovx)
    :=
    \lc
        \vx\in\dom
        : d_{\partial\dom}(\vx)>\lp\lambda\|\ovx-\vx\|_{\ren}\rp^\theta
    \rc.
\]
(See \cref{F:Approach} below.) Given $\delta>0$, we also define $Q^\theta_{\lambda,\delta}(\ovx):=Q^\theta_\lambda\cap\bll_\delta(\ovx)$. For convenience, we define $\alpha\in\cnt(\re^2)$ by
\[
    \alpha(s,\theta):=\lc\begin{array}{ll}
        p(1-\theta)+(ps-n)\theta, & ps-n\le p-1,\\
        (1-\theta)+(ps-n), & ps-n> p-1.
    \end{array}\rt
\]
We note that $\alpha$ is piecewise bilinear and that $\alpha(\cdot,\theta)$ is increasing, for each $\theta\ge 1$.  For the remainder of the paper, we fix the parameters $0\le t<n$ and $0<\delta_\bnd<1$ and the uniformly bounded measurable functions $s:\dom\to[0,\infty)$ and $\theta_\bnd:\bnd\to[1,\infty)$. Put $\ov{\theta}_\bnd:=\sup_{\ovx\in\bnd}\theta_\bnd(\ovx)$. For each $0<\delta\le\delta_\bnd$, define $\ul{s}_\delta,\ul{\alpha}_\delta,\ul{\alpha}_0:\bnd\to\re$ by
\[
    \ul{s}_\delta(\ovx):=\inf_{\vx\in\dom_\delta(\ovx)}s(\vx),\quad
    \ul{\alpha}_\delta(\ovx):=\alpha(\ul{s}_\delta(\ovx),\theta_\bnd(\ovx)),
    \nd
    \ul{\alpha}_0(\ovx):=\lim_{\delta\to0^+}\ul{\alpha}_\delta(\ovx).
\]
We observe that the above functions are each measurable and uniformly bounded.

\subsection{Assumptions}\label{SS:Assumptions}
We now list our primary assumptions for $\dom$, $\bnd$ and the space $\class{N}^{s(\cdot),p}(\dom)$. We assume that $\bnd\in\class{B}(\partial\dom)$ has $\haus^t$-dimension. When convenient, we will just write $Q_{\lambda}(\ovx)$ and $Q_{\lambda,\delta}(\ovx)$ for $Q^{\theta_\bnd(\ovx)}_\lambda(\ovx)$ and $Q^{\theta_\bnd(\ovx)}_{\lambda,\delta}(\ovx)$, respectively.
\begin{itemize}
\item[(H1)] \textbf{Uniform $\theta_\bnd$-Corkscrew Condition}: There exists $0<\lambda_0,\eta_0<1$ such that, for each $\ovx\in\bnd$, the set $Q_{\lambda_0}(\ovx)$ is an $(\eta_0,\theta_\bnd(\ovx))$-corkscrew region for $\ovx$ with radius $\delta_\bnd$.
\item[(H2)] \textbf{Uniform $\theta_\bnd$-Connectedness Condition}: There exists $C_\bnd\ge 1$ such that, for each $\ovx\in\bnd$, the set $\dom_{\delta_\bnd}(\ovx)$ is $(C_\bnd,\theta_\bnd(\ovx))$-connected to $\ovx$.
\item[(H3)] \textbf{Oscillation Constraints}:
\begin{itemize}
    \item[(H3$'$)] \textbf{Pointwise Oscillation Constraint at $\bnd$}:
\[
    \ul{\alpha}_0(\ovx)>-t,
    \quad\text{ for $\haus^t$-a.e. }\ovx\in\bnd.
\]
    \item[(H3$''$)] \textbf{Uniform Oscillation Constraint near $\bnd$}:
\[
    \ul{\alpha}_{\bnd}:=\inf_{\ovx\in\bnd}\ul{\alpha}_{\delta_\bnd}(\ovx)>-t.
\]
\end{itemize}
\item[(H4)] \textbf{Ahlfors-Regularity}: There exists $A_\bnd\ge 1$ such that, for each $\ovx\in\bnd$,
\begin{itemize}
    \item[(H4$'$)] \textbf{Upper Ahlfors-Regularity}:
\[
    \haus^t(\bnd\cap\bll_\rho(\ovx))\le A_\bnd\rho^t,
    \frl \rho>0.
\]
    \item[(H4$''$)] \textbf{Lower Ahlfors-Regularity}:
\[
    A^{-1}_\bnd\rho^t\le\haus^t(\bnd\cap\bll_\rho(\ovx)),
    \frl 0<\rho\le\diam(\bnd).
\]
\end{itemize}
\end{itemize}

\begin{figure}[h]
    \centering
    \parbox[b]{2.6in}{
    \scalebox{1.1}{\includegraphics[trim=150pt 520pt 310pt 125pt,clip]{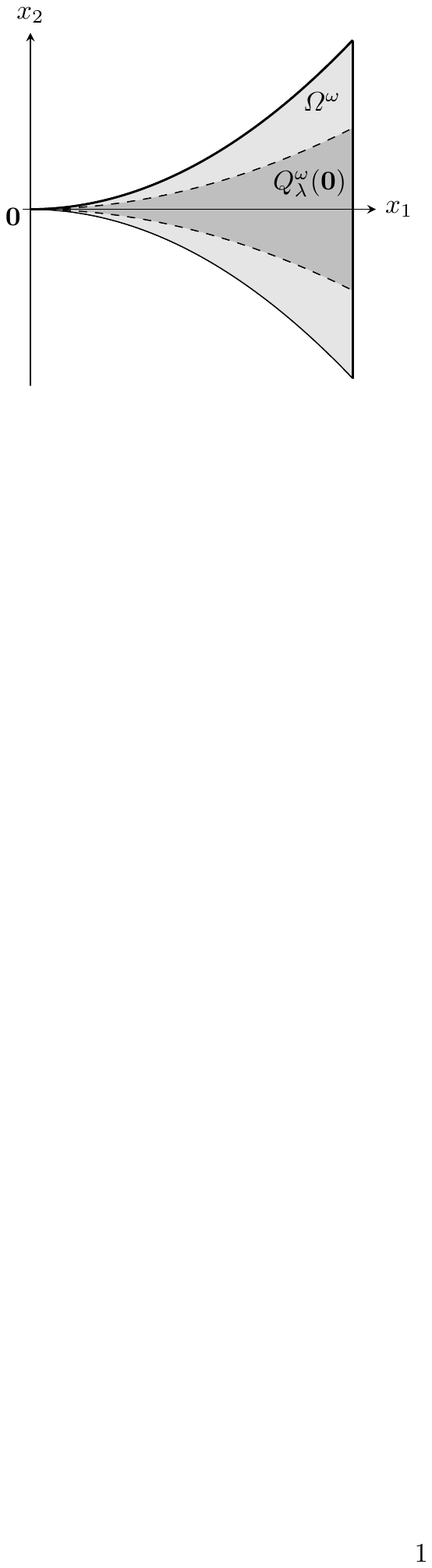}}
    \caption{Approach region $Q^\omega_\lambda(\ve{0})$ for $\ve{0}$}
    \label{F:Approach}}
    \hspace{20pt}
    \parbox[b]{2.6in}{
    \scalebox{.43}{\includegraphics[trim=10pt 17pt 10pt 15pt,clip]{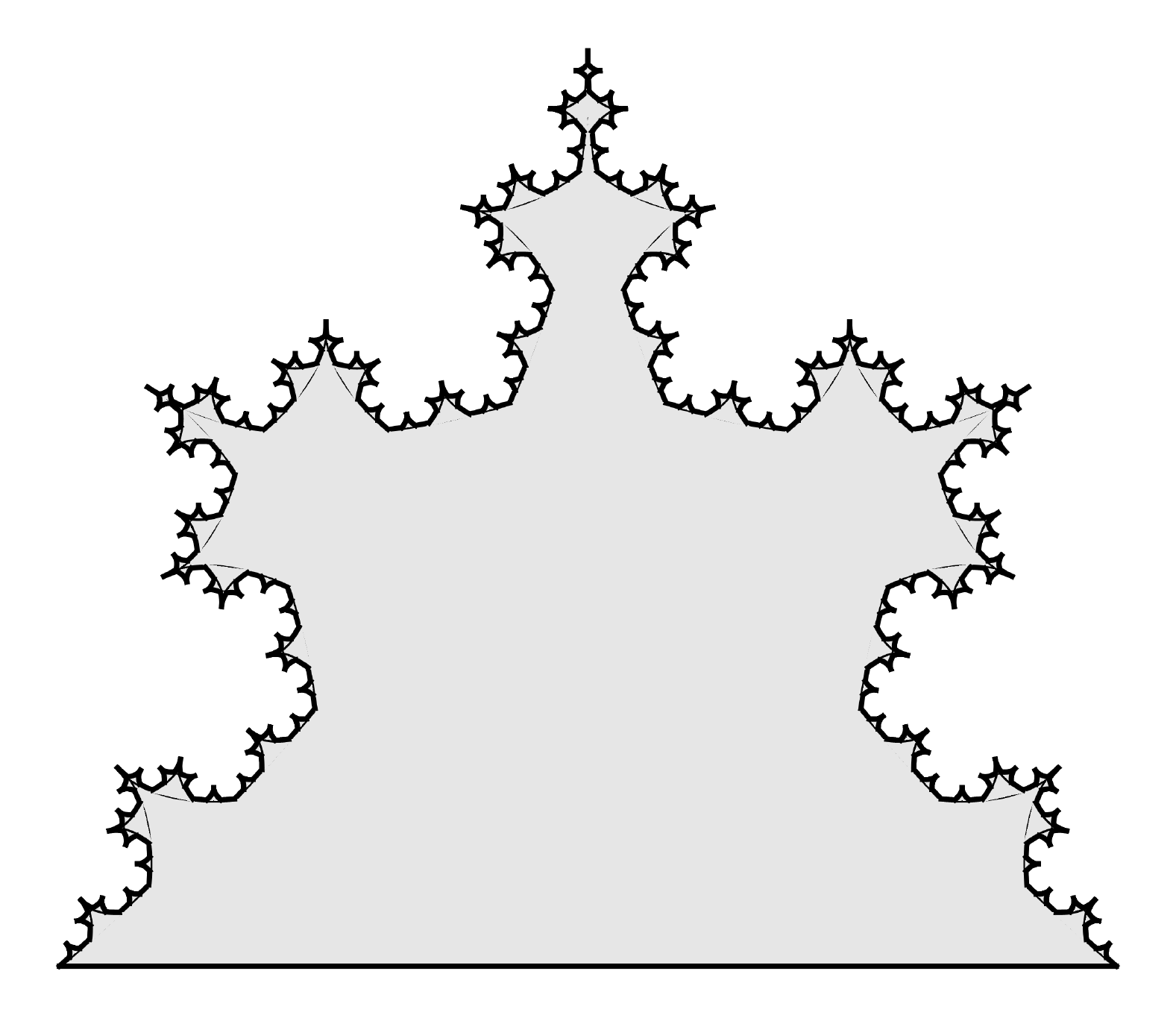}}
    \caption{Snowflake with cusps}
    \label{F:Snowflake}
    }
\end{figure}

\begin{example}\label{E:Domains}
\begin{enumerate}[(a)]
\item\label{E:Proto} The prototypical example for $Q^{\theta_\bnd(\ovx)}_\lambda(\ovx)$ is a wedge-like region where $\ovx$ is at a cusp. Let $\theta_0\ge1$ and $\frac{1}{2}<H\le\frac{\sqrt{3}}{2}$ be given. Define
\[
    \dom^{\theta_0}=\lc\vx=(x_1,x_2)\in\re^2:0<x_1<H
    \text{ and }0<|x_2|<\smfrac{1}{2}\lp x_1/H\rp^{\theta_0}\rc.
\]
Then, for any $0<\lambda_0,\eta_0<1$, the hypotheses (H1) and (H2) are satisfied with $\bnd=\partial\dom^{\theta_0}$ and $\theta_\bnd:=\theta_0\chi_{\{\ve{0}\}}+\chi_{\bnd\bs\{\ve{0}\}}$. Figure 1 depicts the approach region $Q^{\theta_\bnd(\ve{0})}_{\lambda_0}(\ve{0})=Q^{\theta_0}_{\lambda_0}(\ve{0})$ when $\theta_0>1$.
\item\label{E:Prickly} As mentioned in the introduction, assumptions (H1) and (H2), themselves, do not imply any regularity of the boundary. The Koch snowflake is a well-known example of an nontangentially accessible (NTA) domain (see \cref{R:HypConn}(\ref{R:NTA}) and~\cite{Fau:95a}) that has a nonrectifiable boundary but satisfies (H1) and (H2), with $\theta_\bnd\equiv1$ and $\vep_\lambda$ independent of $\lambda$. The Koch snowflake can be generated as the union of an iteratively produced sequence of polygonal domains. The initial domain is an equilateral triangle, for example $\dom^1$ with $H=\sqrt{3}/2$ defined in \cref{E:Domains}(\ref{E:Proto}) above. Subsequent domains are obtained by replacing the middle third of the boundary line segments with an appropriately scaled outward-pointing equilateral triangle. With a similar procedure, using $\dom^{\theta_0}$ with $\theta_0>1$, we can produce a ``prickly'' version of the snowflake domain (see \cref{F:Snowflake}). Taking $\bnd$ to be the fractal portion of the resulting domain boundary, we find that its Hausdorff dimension is $(\ln4/\ln3)\le t<2$ and $0<\haus^t(\bnd)<\infty$. Moreover, $\dom$ and $\bnd$ satisfy hypotheses (H1) and (H2), with $\theta_\bnd\equiv\theta_0$, and both (H4${}^\prime$) and (H4${}^{\prime\prime}$). The set $\dom$, however, fails to be a $1$-sided NTA domain. More specifically, for every $\rho>0$ and $\ovx\in\bnd$, there exists $\ovy\in\bnd\cap\bll_\rho(\ovx)$ such that there is no $(C,\theta)$-connected $(\eta,\theta)$-corkscrew region with positive radius for $\ovy$, for any $0<\eta<1\le\theta<\theta_0$ and $C\ge 1$. Thus, in particular, $\dom_\rho(\ovx)$ is not a $1$-sided NTA domain, for any $\rho>0$. Additional details are provided in the appendix (Section~\ref{S:Appendix}).
\end{enumerate}
\end{example}

We next put the corkscrew and connectedness assumptions into a broader context.
\begin{remark}\label{R:HypConn}
Together, (H1) and (H2) ensure each point $\ovx\in\bnd$ can be approached along a path $\ve{\gamma}\subseteq\dom$ with a quantitative control on the distance between points $\vx\in\ve{\gamma}$ and $\partial\dom$. For the rest of this remark, suppose that $\bnd=\partial\dom$ is bounded and that $\theta_\bnd\equiv1$ on $\partial\dom$.
\begin{enumerate}[(a)]
\item\label{R:NTA} In this setting, hypotheses (H1) is the standard (interior) corkscrew condition. If $\dom$ also possesses the (interior) Harnack chain property, then $\dom$ is said to be a \emph{$1$-sided NTA domain}. An NTA domain is a $1$-sided NTA domain such that $\ren\bs\dom$ also satisfies the corkscrew condition. These domains were introduced in~\cite{JerKen:82a} in connection to the absolute continuity of the harmonic measure with respect to the surface measure on $\partial\dom$. As indicated in \cref{E:Domains}(\ref{E:Prickly}), NTA domains need not even have rectifiable boundaries. Recently, it has been shown that if $\dom$ is $1$-sided NTA and $\partial\dom$ is both $(n-1)$-dimensional and upper and lower Ahlfors-regular, then rectifiability of $\partial\dom$ is actually equivalent to the absolute continuity of the harmonic measure~\cite{AzzMar:17a}.
\item\label{R:H2Rem}  The motivation for (H2) is Lemma 2.1 in~\cite{MayGuyFen:18a}. As part of their investigation of elliptic problems in domains with higher codimensional boundaries, they show that Ahlfors-regular sets in $\ren$, with dimension $0\le t<n-1$, satisfy (H2), again with $\theta_\bnd\equiv 1$. There is a close relation between assumptions (H1) and (H2) and local uniformity. The domain $\dom$ is \emph{locally uniform} if there exists $C\ge 1$ and $\rho_0>0$ such that, for every $\vx,\vx'\in\dom$ satisfying $\|\vx-\vx'\|_{\ren}\le\rho_0$, there exists a rectifiable path $\ve{\gamma}:[0,1]\to\dom$ between $\vx$ and $\vx'$ such that
\[
    \haus^1(\ve{\gamma}([0,1]))\le C\|\vx-\vx'\|_{\ren}
\]
and
\[
    d_{\partial\dom}(\ve{\gamma}(\tau))
    \ge
    C^{-1}\min\{\haus^1(\ve{\gamma}([0,\tau])),\haus^1(\ve{\gamma}([\tau,1]))\}.
\]
We see that local uniformity implies (H2) with $\vep_\lambda$ independent of $\lambda$. Assumptions (H1) and (H2) together, however, imply $\dom$ is locally uniform (see Lemma~A.1 and the proof for Theorem 2.15 in~\cite{AzzMar:17a}). A condition equivalent to local uniformity is the $(\vep,\delta)$-condition. It has been shown that, on $(\vep,\delta)$-domains, there exists a linear continuous extension operator for the BMO and Sobolev mappings~\cite{Jon:80a,Jon:81a}. If $\dom$ is locally uniform with $\rho_0\ge\diam(\dom)$, then it is a \emph{uniform} domain. These domains were introduced in~\cite{MarSar:79a}, for which injectivity and approximation results for locally bi-Lipschitz mappings were established. The class of uniform domains is actually equivalent to the class $1$-sided NTA. In view of part (a) above, we see that assumptions (H1) and (H2) are both satisfied, with $\theta_\bnd\equiv1$, by $1$-sided NTA domains. We point out that, in \cref{E:Domains}(\ref{E:Prickly}), there are no locally uniform neighborhoods of any point in $\bnd$.
\end{enumerate}
\end{remark}

Assumptions (H3) can, in some sense, be interpreted as a requirement that the oscillations in $u\in\class{N}^{s(\cdot),p}(\dom)$ decay as $\bnd$ is approached. If $\nu^{s(\cdot,p}(\dom;u)<\infty$, then there exists $f\in L^p(\dom)$ such that
\[
    \fslint{\Psi(\vx)}|u(\vy)-u(\vx)|\dd\vy\le d_{\partial\dom}(\vx)^{ps(\vx)}f(\vx),
    \quad\text{ for a.e.-}\vx\in\dom.
\]
The left-hand side ``measures'' the deviation of the values of $u$ from $u(\vx)$ over the ball $\Psi(\vx)$. Since $t<n$, (H3) implies that, for each $\ovx\in\bnd$, there exists a $\delta_0>0$ such that $s$ is uniformly positive within the corkscrew region $Q_{\lambda_0,\delta_0}(\ovx)$. Thus, (H3) ensures the oscillations of $u$ dampen in $Q_{\lambda_0,\delta}(\ovx)$, as $\delta\to0^+$, which allows a well-defined value for $u(\ovx)$ to be identified.

While Jensen's inequality implies, for all $1\le q<\infty$, we have
\[
    \nu^{s(\cdot),p}(\dom;u)
    \le
    \nu^{s(\cdot),(p,q)}(\dom;u)
    :=
    \slint{\dom\cap U}\lp\fslint{\Psi(\vx)}
    \frac{|u(\vy)-u(\vx)|^q}{d_{\partial\dom}(\vx)^{qs(\vx)}}\dd\vy\rp^\frac{p}{q}\dd\vx.
\]
As just discussed, assumption (H3) is a type of decay requirement for the oscillations of $u\in\class{N}^{s(\cdot),p}(\dom)$ but only in the sense of averages. This still allows the oscillations with uniformly positive amplitude to concentrate on sets of decreasing measure, and its possible for $\nu^{s(\cdot),(p,q)}(\dom;u)=+\infty$, for every $q>1$.
\begin{example}\label{E:StrictContain}
For this example, we use $\dom=(0,2)$ and $\bnd=\{0,2\}$, so assumptions (H1) and (H2) are obviously satisfied. Fix $p\ge1$ and $s_0\ge1/p$. We will construct $u\in L^\infty\cap\class{N}^{s_0,p}(\dom)$ such that $\nu^{s_0,(p,q)}(\dom;u)=+\infty$, for all $q>1$. Here, we are putting $s(\cdot)\equiv s_0$ on $\dom$. For each $j\in\nats$, define
\[
    a_j:=\frac{4^{-j\lp s_0-\frac{1}{p}\rp}}{5j^\frac{1}{p}\lp\ln(j+2)\rp^\frac{2}{p}},
    \quad
    E_j:=(4^{-j},a_j4^{-j}),
    \nd
    F_j:=[a_j4^{-j},4^{-j+1}],
\]
so $a_j\le\frac{6}{5}$ and $(0,1]=\bigcup_{j=1}^\infty E_j\cup F_j$. On the interval $(0,1]$, we define $u:=\sum_{j=1}^\infty\chi_{E_j}$ and extend $u$ to $(1,2)$ by putting $u(x):=u(2-x)$, for each $x\in(1,2)$. Since $u$ is symmetric, it is sufficient to show $\nu^{s_0,p}((0,1])<\infty$ and $\nu^{s_0,(p,q)}((0,1])=+\infty$, for $q>1$. We observe that, for each $j\in\nats$, if $x\in\wh{E}_j:=(4^{-j},2\cdot4^{-j})$, then
\[
    x<2\cdot4^{-j}\Longrightarrow\smfrac{1}{2}x<4^{-j}
    \nd
    x>4^{-j}\Longrightarrow\smfrac{3}{2}x>\smfrac{3}{2}4^{-j}\ge a_j4^{-j}
\]
Consequently, $E_j\subseteq\Psi(x)$. Furthermore, if $\Psi(x)\cap E_j\neq\emptyset$, then either
\[
    \smfrac{1}{2}x<a_j4^{-j}\Longrightarrow \smfrac{3}{2}x<3a_j4^{-j}\le 4^{-j+1}
    \:\text{ or }\:
    \smfrac{3}{2}x>4^{-j}\Longrightarrow\smfrac{1}{2}x>\smfrac{1}{3}4^{-j}\ge a_{j+1}4^{-j-1}.
\]
In either case, $\Psi(\vx)\cap E_{j-1}\cup E_{j+1}=\emptyset$. Set $\wh{F}_j:=[2\cdot4^{-j},4^{-j+1}]$. Now, if $x\in E_j\subseteq\wh{E}_j$, then $u(x)=1$ and
\[
    \slint{\Psi(x)}|u(y)-u(x)|^q\dd x=|\Psi(x)\bs E_j|
    \ge
    \lp4^{-j}-\smfrac{1}{2}x\rp+\lp\smfrac{3}{2}x-\smfrac{6}{5}4^{-j}\rp
    \ge
    \smfrac{3}{5}4^{-j}
    \ge\smfrac{1}{2}|E_j|.
\]
On the other hand, if $x\in \wh{E}_j\bs E_j$, then $u(x)=1$ and $\slint{\Psi(x)}|u(y)-u(x)|^q\dd x=|E_j|$.
We conclude that
\[
    \lc\begin{array}{ll}
        \smfrac{1}{2}|E_j|, & x\in\wh{E}_j,\\
        0, & x\in\wh{F}_j
    \end{array}\rt
    \le
    \slint{\Psi(x)}|u(y)-u(x)|^q\dd y
    \le
    |E_j|,
    \frl x\in(4^{-j},4^{-j+1}].
\]
Thus,
\begin{align*}
    \lc\begin{array}{ll}
        \lp\smfrac{1}{4}\rp^{s_0p+\frac{p}{q}}|E_j|^\frac{p}{q}4^{j\lp s_0p+\frac{p}{q}\rp}, & x\in\wh{E}_j\\
        0, & x\in\wh{F}_j
    \end{array}\rt
    \le&
    \lc\begin{array}{ll}
       \lp\smfrac{1}{2}|E_j|\rp^\frac{p}{q}
       d_{\partial\dom}(x)^{-s_0p}x^{-\frac{p}{q}}, & x\in\wh{E}_j\\
        0, & x\in\wh{F}_j
    \end{array}\rt\\
    \le&
    \lp\fslint{\Psi(x)}\frac{|u(y)-u(x)|^q}{d_{\partial\dom}(x)^{qs_0}}\dd y\rp^\frac{p}{q}\\
    \le&
    |E_j|^\frac{p}{q}d_{\partial\dom}(x)^{-s_0p}x^{-\frac{p}{q}}\\
    \le&
    |E_j|^\frac{p}{q}4^{j\lp s_0p+\frac{p}{q}\rp},\frl x\in (4^{-j},4^{-j+1}].
\end{align*}
Since $|E_j|=a_j4^{-j}$ and $|\wh{E}_j|=2\cdot4^{-j}$, we conclude that
\[
    \lp\frac{1}{8}\rp^{s_0p+\frac{p}{q}}\sum_{j=1}^\infty a_j^\frac{p}{q}4^{j(s_0p-1)}
    \le
    \sluint{0}{1}\lp\fslint{\Psi(x)}\frac{|u(y)-u(x)|^q}{d_{\partial\dom}(x)^{s_0}}
        \dd y\rp^\frac{p}{q}\dd x
    \le
    \sum_{j=1}^\infty a_j^\frac{p}{q}4^{j(s_0p-1)}.
\]
Plugging in the definition for $a_j$, we see that
\begin{align*}
    q=1
    \Longrightarrow&
    \nu^{s_0,(p,q)}(\dom;u)=\nu^{s_0,p}(\dom;u)
    \le
    \lp\frac{1}{5}\rp^p\sum_{j=1}^\infty\frac{1}{j\ln(j+2)^2}<\infty\\
    q>1
    \Longrightarrow&
    \nu^{s_0,(p,q)}(\dom;u)\ge
     \lp\frac{1}{8}\rp^{s_0p+\frac{p}{q}}
     \lp\frac{1}{5}\rp^\frac{p}{q}\sum_{j=1}^\infty\frac{4^{j\lp\frac{q-1}{q}\rp(s_0p-1)}}
        {j^\frac{1}{q}(\ln(j+2))^\frac{2}{q}}=+\infty
\end{align*}
\end{example}
\begin{remark}
If we modify the definition of $u$, on $(0,1]$, to $u:=\sum_{j=1}^\infty\frac{1}{\ln(j+1)}\chi_{E_j}$, then we find $\lim_{x\to0^+}u(x)=\lim_{x\to2^-}u(x)=0$ yet still $\nu^{s_0,(p,q)}=+\infty$, for all $q>1$.
\end{remark}

\subsection{Main Results}\label{SS:MainThms}
Given $u\in\class{N}^{s(\cdot),p}(\dom)$, we define $g:\dom\to\re$ by
\begin{equation}\label{D:gMeanFun}
    g(\vx;u):=\fslint{\Phi(\vx)}u(\vy)\dd\vy,
\end{equation}
where $\Phi(\vx):=\bll_{\frac{1}{6}d_{\partial\dom}(\vx)}(\vx)$. The function $g(\cdot;u)$ is continuous in $\dom$. Our first result identifies the trace of $u$ on $\bnd$ through a continuous extension of $g$ to $\bnd$ at $\haus^t$-a.e. point in $\bnd$.
\begin{theorem}\label{T:TraceExists}
Assume (H1) and (H2), and (H3$\,'$). Then there exists a linear operator $T:\class{N}^{s(\cdot),p}(\dom)\to L(\bnd)$ such that, for each $u\in\class{N}^{s(\cdot),p}(\dom)$ there exists a $\haus^t$-measurable set $\bnd'\subseteq\bnd$ such that
\begin{equation}\label{E:TraceLimit}
    \haus^t(\bnd\bs\bnd')=0
    \nd
    \lim_{\vx\to\ovx;\;\vx\in Q_\lambda(\ov{\vx})}g(\vx;u)=Tu(\ovx),
    \frl0<\lambda<1.
\end{equation}
Moreover, for every $\ovx\in\bnd'$ and $0<\beta<\lp\ul{\alpha}_0(\ovx)+t\rp/p$, there exists a $\delta_\beta=\delta_\beta(\ovx)>0$ with the following property: for each $0<\lambda<1$ there exists $C_{\beta,\lambda}(\ovx)<\infty$ such that
\begin{equation}\label{E:TraceHoldCont}
    |Tu(\ovx)-g(\vx;u)|
    \le
    C_{\beta,\lambda}(\ovx)
        \|\ovx-\vx\|^\beta,
    \frl\vx\in Q_{\lambda,\delta_\beta}(\ovx).
\end{equation}
\end{theorem}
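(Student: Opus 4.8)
The plan is to realize $Tu(\ovx)$ as the limit of the mean‑value function $g(\cdot;u)$ along chains of points running from deep inside $\dom$ out to $\ovx$, and to bound the oscillation of $g$ along such chains by $\nu^{s(\cdot),p}(\dom;u)$. Throughout write $\omega(\vx):=\fslint{\Psi(\vx)}|u(\vy)-u(\vx)|\,\dd\vy$ (defined for a.e.\ $\vx\in\dom$) and $f(\vx):=\omega(\vx)\,d_{\partial\dom}(\vx)^{-s(\vx)}$, so $f\in L^p(\dom)$ with $\|f\|_{L^p(\dom)}^p=\nu^{s(\cdot),p}(\dom;u)$. The first ingredient is a purely local stability estimate for $g$: there are dimensional constants $0<c_0,c_1<1$ such that if $\vx,\vx'\in\dom$ with $\|\vx-\vx'\|<c_0 d_{\partial\dom}(\vx)$, then $\Phi(\vx),\Phi(\vx')$ both lie in $B^{*}:=\bll_{c_1 d_{\partial\dom}(\vx)}(\vx)$, occupy a fixed fraction of it, and $B^{*}\subseteq\Psi(\vz)$ for every $\vz\in B^{*}$; feeding $|g(\vx;u)-g(\vx';u)|\le\fslint{\Phi(\vx)}\fslint{\Phi(\vx')}|u(\vy)-u(\vz)|\,\dd\vz\,\dd\vy$ through these inclusions and averaging the base point $\vz$ over $B^{*}$ gives $|g(\vx;u)-g(\vx';u)|\le C\fslint{B^{*}}\omega$. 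If moreover $B^{*}\subseteq\dom_\delta(\ovx)$ for some $\ovx\in\bnd$ and $\delta<\tfrac12$, then on $B^{*}$ we have $d_{\partial\dom}<1$ and $s\ge\ul{s}_\delta(\ovx)\ge0$, and Hölder's inequality upgrades this to
\[
  |g(\vx;u)-g(\vx';u)|\le C\,d_{\partial\dom}(\vx)^{\ul{s}_\delta(\ovx)-n/p}\Big(\slint{B^{*}}f(\vz)^p\,\dd\vz\Big)^{1/p}.
\]

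Next, fix $\ovx\in\bnd$, put $\delta_k:=2^{-k}\delta_\bnd$, and use (H1) to choose corkscrew points $\vx_k\in Q_{\lambda_0}(\ovx)$ with $\eta_0\delta_k<\|\ovx-\vx_k\|<\delta_k$ and $d_{\partial\dom}(\vx_k)>(\eta_0\delta_k)^{\theta_\bnd(\ovx)}$. Since $\vx_k,\vx_{k+1}\in\dom_{\delta_k}(\ovx)$ with $d_{\partial\dom}(\vx_k),d_{\partial\dom}(\vx_{k+1})>(\tfrac{\eta_0}{2}\delta_k)^{\theta_\bnd(\ovx)}$, (H2) yields a rectifiable path $\ve\gamma_k\subseteq\dom$ from $\vx_k$ to $\vx_{k+1}$ with $\haus^1(\ve\gamma_k)\le C_\bnd\delta_k$ and $d_{\partial\dom}\ge\vep_{\eta_0/2}\,\delta_k^{\theta_\bnd(\ovx)}$ along it, hence $\ve\gamma_k\subseteq\dom_{C_2\delta_k}(\ovx)$ for a fixed $C_2\ge1$. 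Cover $\ve\gamma_k$ by a Whitney‑type chain of balls $\bll_{c_1 r_{k,i}}(\vx_{k,i})$ on the path, $r_{k,i}:=d_{\partial\dom}(\vx_{k,i})$, with consecutive balls overlapping, bounded overlap of the associated balls $B^{*}_{k,i}$, and $\sum_i r_{k,i}\approx\haus^1(\ve\gamma_k)$; telescoping the displayed local estimate along this chain, grouping indices by the dyadic size of $r_{k,i}$ and applying Hölder within each group together with $\sum_i r_{k,i}\le C_\bnd\delta_k$ and $\sum_i\slint{B^{*}_{k,i}}f^p\lesssim\slint{\dom\cap\bll_{C_2\delta_k}(\ovx)}f^p$, one obtains, with $\theta:=\theta_\bnd(\ovx)$,
\[
  |g(\vx_k;u)-g(\vx_{k+1};u)|\le C\,\delta_k^{\,(\alpha(\ul{s}_{C_2\delta_k}(\ovx),\theta)+\theta-1)/p}\Big(\slint{\dom\cap\bll_{C_2\delta_k}(\ovx)}f^p\,\dd\vx\Big)^{1/p}.
\]
Carrying out this bookkeeping correctly is the main obstacle: one must separate how deeply the connecting path may dip toward $\partial\dom$ (which controls how many telescoping steps are needed, a factor that naively grows like $\delta_k^{1-\theta}$) from how much of the $L^p$‑mass of $f$ may accumulate there, and it is precisely whether $p\,\ul{s}_{C_2\delta_k}(\ovx)-n$ exceeds $p-1$ in the Hölder estimate that produces the two branches in the definition of $\alpha$.

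To pass to the limit, note that $E\mapsto\slint{\dom\cap E}f^p$ is a finite Borel measure, so a standard Vitali/Besicovitch covering argument shows that the set of $\ovx$ with $\limsup_{\rho\to0^+}\rho^{-t}\slint{\dom\cap\bll_\rho(\ovx)}f^p=\infty$ has $\haus^t$‑measure zero. Let $\bnd'$ be the intersection of its complement with $\{\ovx\in\bnd:\ul{\alpha}_0(\ovx)>-t\}$; by (H3$'$), $\haus^t(\bnd\bs\bnd')=0$. Fix $\ovx\in\bnd'$ and $0<\beta<(\ul{\alpha}_0(\ovx)+t)/p$, and put $M(\ovx):=1+\limsup_{\rho\to0^+}\rho^{-t}\slint{\dom\cap\bll_\rho(\ovx)}f^p<\infty$. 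Since $\ul{s}_{C_2\delta_k}(\ovx)\uparrow\ul{s}_0(\ovx)$ and $\alpha(\cdot,\theta)$ is continuous and increasing, $\alpha(\ul{s}_{C_2\delta_k}(\ovx),\theta)+\theta-1\to\ul{\alpha}_0(\ovx)+\theta-1\ge\ul{\alpha}_0(\ovx)$, while $\slint{\dom\cap\bll_{C_2\delta_k}(\ovx)}f^p\le M(\ovx)(C_2\delta_k)^t$ for $k$ large; hence the right side of the last display is at most $C\,M(\ovx)^{1/p}\delta_k^{\,\beta}$ for all $k\ge k_0=k_0(\ovx,\beta)$, the sequence $\{g(\vx_k;u)\}$ is Cauchy, and summing the geometric tail gives $|g(\vx_{k_0};u)-Tu(\ovx)|\le C\,M(\ovx)^{1/p}\delta_{k_0}^{\,\beta}$, where $Tu(\ovx):=\lim_k g(\vx_k;u)$.

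Finally, set $\delta_\beta:=\delta_{k_0(\ovx,\beta)}$ (shrunk by a fixed factor). For $0<\lambda<1$ and $\vx\in Q_{\lambda,\delta_\beta}(\ovx)$, apply (H2) with $\rho\approx\|\ovx-\vx\|$ and parameter $\approx\min(\lambda,\eta_0)$ to connect $\vx$ to a corkscrew point at the comparable scale, and run the chaining estimate along that path and along the corkscrew chain down to $\ovx$; this yields \eqref{E:TraceHoldCont}, with $C_{\beta,\lambda}(\ovx)$ depending on $\ovx$, $\beta$, and (through $\vep_{\min(\lambda,\eta_0)}$ and $M(\ovx)$) on $\lambda$, but nothing else. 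In particular \eqref{E:TraceLimit} holds for every $\lambda$, and this cone‑limit characterization shows $Tu(\ovx)$ is independent of the chosen corkscrew points. Linearity of $T$ is immediate from linearity of $u\mapsto g(\cdot;u)$ and \eqref{E:TraceLimit} on the full‑measure intersection of the associated sets $\bnd'$. For measurability, observe that $\ovx\mapsto\fslint{Q_{\lambda_0,\rho}(\ovx)}g(\vx;u)\,\dd\vx$ is $\haus^t$‑measurable — the set $\{(\ovx,\vx):\vx\in Q_{\lambda_0,\rho}(\ovx)\}$ is Borel because $\theta_\bnd$ is measurable, $g(\cdot;u)$ is continuous, and the denominator is positive by (H1) — and that \eqref{E:TraceHoldCont} forces these averages to converge to $Tu(\ovx)$ as $\rho\to0^+$, so $Tu\in L(\bnd)$ after discarding a $\haus^t$‑null set.
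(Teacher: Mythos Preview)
Your proposal follows essentially the same route as the paper: a local stability estimate for $g$ (the paper's Lemma~2.4), telescoping along the (H2)-path joining corkscrew points at successive dyadic scales (Lemma~2.5), a Vitali/Giusti covering argument to show the exceptional set is $\haus^t$-null (Lemma~2.1), and then the definition of $Tu$ as the cone limit together with the H\"older estimate (Theorem~3.1). One small caveat: your bounded-overlap claim for the balls $B^{*}_{k,i}$ along $\ve\gamma_k$ is not justified---the (H2)-path may revisit the same region many times---so the dyadic H\"older argument may not actually deliver the sharper exponent $(\alpha+\theta-1)/p$; the paper instead simply bounds each $\nu^{s(\cdot),p}(\Phi(\vx_j))\le\nu^{s(\cdot),p}(\bll_\delta(\ovx))$ and multiplies by the chain length $N\lesssim\vep_{\lambda'}^{-1}\delta^{1-\theta}$, obtaining $\ul\alpha_\delta(\ovx)/p$, which is already enough since you only ever use $\ul\alpha_0(\ovx)+t>0$ afterward.
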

\begin{remark}
If $u\in\cnt(\dom\cup\bnd)$, then $Tu(\vx)=u(\vx)$, for all $\vx\in\bnd$.
\end{remark}
Assuming a stronger oscillation constraint in a neighborhood of $\bnd$ and some regularity for $\bnd$, we may establish some differentiability and the Lebesgue point property for the trace operator provided by the previous theorem.
\begin{theorem}\label{T:TraceProp}
Assume (H1), (H2), and (H3$\,''$). Put $\beta_0:=(\ul{\alpha}_\bnd+t)/p>0$.
\begin{enumerate}[(a)]
    \item If $\bnd$ satisfies (H4$\,'$), then $T:\class{N}^{s(\cdot),p}(\dom)\to W^{\beta,p}(\bnd)$ is a continuous linear operator, for each $0<\beta<\beta_0$.
    \item If $\bnd=\partial\dom$ satisfies both (H4$\,'$) and (H4$\,''$) and $\ul{\alpha}_\bnd>n(\theta_\bnd(\ovx)-1)-t$, for all $\ovx\in\bnd$, then
\[
    \lim_{\rho\to0^+}\fslint{\dom_\rho(\ovx)}|Tu(\ovx)-u(\vx)|^p\dd\vx=0,
    \quad\text{for }\haus^t\text{-a.e. }\ovx\in\bnd.
\]
\end{enumerate}
\end{theorem}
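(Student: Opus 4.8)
The plan is to derive both parts from the quantitative H\"older estimate \eqref{E:TraceHoldCont} of \cref{T:TraceExists}, read in a ``localized-energy'' form, together with the Ahlfors regularity of $\bnd$, which is exactly the tool that converts $n$-dimensional Lebesgue measure in a neighborhood of $\bnd$ into the surface measure on $\bnd$. Since $t<n$, (H3$''$) forces $\ul{s}_{\delta_\bnd}$ to be bounded below by a positive constant, and, tracking the constants in \cref{T:TraceExists}, \eqref{E:TraceHoldCont} holds with $\beta$ allowed up to $\beta_0=(\ul{\alpha}_\bnd+t)/p$ uniformly over $\ovx\in\bnd'$, with $C_{\beta,\lambda}(\ovx)^p$ bounded by a fixed multiple of a dyadically weighted sum of the localized energies $\nu^{s(\cdot),p}(\dom_{2^{-k}}(\ovx)\setminus\dom_{2^{-k-1}}(\ovx);u)$: this is what the telescoping series $Tu(\ovx)-g(\vx;u)=\sum_k\bigl(g(\vx_{k+1};u)-g(\vx_k;u)\bigr)$ behind \eqref{E:TraceHoldCont} produces, each increment being estimated along a controlled path joining the corkscrew points of $\ovx$ at consecutive dyadic scales via (H2), and summed over a dyadic net of centres these localized energies reconstruct a fixed multiple of $\nu^{s(\cdot),p}(\dom;u)$. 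For part (a), fix $0<\beta<\beta'<\beta_0$; for $\ovx,\ovy\in\bnd'$ with $r:=\|\ovx-\ovy\|_{\ren}$ small, (H1) supplies corkscrew points $\vx_r,\vy_r$ of $\ovx,\ovy$ at scale $\simeq r$, both lying in $\dom_{Cr}(\ovx)$ with $d_{\partial\dom}\gtrsim r^{\ov{\theta}_\bnd}$, and (H2) joins them by a controlled bridge, so that
\[
  |Tu(\ovx)-Tu(\ovy)|\le|Tu(\ovx)-g(\vx_r;u)|+|g(\vx_r;u)-g(\vy_r;u)|+|g(\vy_r;u)-Tu(\ovy)|
\]
has $p$-th power bounded by $r^{\beta'p}$ times localized energies at $\ovx$, at $\ovy$, and on the bridge. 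Inserting this into $|Tu|^p_{W^{\beta,p}(\bnd)}$, decomposing the inner $\ovy$-integral into dyadic annuli $\{\ovy:\|\ovy-\ovx\|_{\ren}\simeq2^{-j}\}$, bounding their $\haus^t$-measure by $2^{-jt}$ via (H4$'$), and rearranging, the surplus $(\beta'-\beta)p>0$ makes the $j$-series converge and the finite overlap collapses the energies to a fixed multiple of $\nu^{s(\cdot),p}(\dom;u)$; the $L^p(\bnd)$-bound $\|Tu\|_{L^p(\bnd)}\lesssim\|u\|_{L^1(\dom)}+\nu^{s(\cdot),p}(\dom;u)^{1/p}$ is similar but simpler (estimate $|Tu(\ovx)|\le|Tu(\ovx)-g(\vx_*;u)|+\fslint{\Phi(\vx_*)}|u|$ for a fixed-scale corkscrew point $\vx_*$ and integrate against $\haus^t$, localizing if $\bnd$ is unbounded). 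Hence $T:\class{N}^{s(\cdot),p}(\dom)\to W^{\beta,p}(\bnd)$ is bounded for each $0<\beta<\beta_0$.

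For part (b), put $f(\vx):=\fslint{\Psi(\vx)}|u(\vy)-u(\vx)|\,d_{\partial\dom}(\vx)^{-s(\vx)}\,\dd\vy$, so $\mu(E):=\nu^{s(\cdot),p}(E;u)=\int_{\dom\cap E}f^p$ is a finite measure concentrated on $\dom$. For $\haus^t$-a.e.\ $\ovx\in\bnd$ I may assume: $\ovx\in\bnd'$; $\ovx$ is a $\beta'$-Lebesgue point of $Tu$ for every $\beta'<\beta_0$, i.e.\ $\fslint{\bnd_\rho(\ovx)}|Tu(\ovz)-Tu(\ovx)|^p\,\dd\haus^t(\ovz)\lesssim_{\ovx}\rho^{\beta'p}$ (using $Tu\in W^{\beta,p}(\bnd)$ from part (a) and that $(\bnd,\haus^t)$ is doubling, which needs both (H4$'$) and (H4$''$)); and $\mu(\bll_\rho(\ovx))=o(\rho^t)$ (measure differentiation: $\mu$ is mutually singular with $\haus^t$ on $\bnd$, plus (H4$'$)). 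Fix such an $\ovx$. For $\vx\in\dom_\rho(\ovx)$, with $\rho$ small depending on $\lambda$, let $\ovz_\vx\in\bnd$ be a nearest boundary point; since $\|\vx-\ovz_\vx\|_{\ren}=d_{\partial\dom}(\vx)$ one checks $\vx\in Q_\lambda(\ovz_\vx)$ and $\ovz_\vx\in\bnd_{2\rho}(\ovx)$. I would then bound
\begin{align*}
  \fslint{\dom_\rho(\ovx)}|Tu(\ovx)-u(\vx)|^p\dd\vx
  &\lesssim\fslint{\dom_\rho(\ovx)}|u(\vx)-g(\vx;u)|^p\dd\vx\\
  &\quad+\fslint{\dom_\rho(\ovx)}|g(\vx;u)-Tu(\ovz_\vx)|^p\dd\vx+\fslint{\dom_\rho(\ovx)}|Tu(\ovz_\vx)-Tu(\ovx)|^p\dd\vx
\end{align*}
and treat the three terms, using throughout the lower bound $|\dom_\rho(\ovx)|\ge c\,\rho^{\,n\theta_\bnd(\ovx)}$ from (H1). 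First, $|u(\vx)-g(\vx;u)|\le3^n d_{\partial\dom}(\vx)^{s(\vx)}f(\vx)\le3^n\rho^{\,\ul{s}_{\delta_\bnd}(\ovx)}f(\vx)$, so this term is $\lesssim\rho^{\,p\ul{s}_{\delta_\bnd}(\ovx)-n\theta_\bnd(\ovx)}\mu(\bll_\rho(\ovx))=\rho^{\,p\ul{s}_{\delta_\bnd}(\ovx)-n\theta_\bnd(\ovx)+t}\,o(1)$, and the identity $ps\ge\alpha(s,\theta)+n$ (valid for all $\theta\ge1$, by the piecewise-bilinear form of $\alpha$) gives exponent $\ge\beta_0p-n(\theta_\bnd(\ovx)-1)$. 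Second, the energy-weighted \eqref{E:TraceHoldCont} gives $|g(\vx;u)-Tu(\ovz_\vx)|\le C_{\beta',\lambda}(\ovz_\vx)d_{\partial\dom}(\vx)^{\beta'}$; combining with the volume lower bound, a tube-volume estimate for $\{d_{\partial\dom}<\rho\}$ near $\bnd_{2\rho}(\ovx)$ coming from (H4$'$)--(H4$''$), and a Besicovitch cover of $\bnd_{2\rho}(\ovx)$ to sum the localized energies, this term is $\lesssim\rho^{\,\beta'p-n(\theta_\bnd(\ovx)-1)}\,o(1)$. Third, pushing forward under $\vx\mapsto\ovz_\vx$ and comparing, at scales $\gtrsim\rho$, with $\rho^{\,n-t}$ times $\haus^t$ on $\bnd_{2\rho}(\ovx)$ via the tube estimate, then invoking the $\beta'$-Lebesgue-point property, this term is $\lesssim\rho^{\,n(1-\theta_\bnd(\ovx))}\fslint{\bnd_{2\rho}(\ovx)}|Tu(\ovz)-Tu(\ovx)|^p\dd\haus^t(\ovz)\lesssim\rho^{\,\beta'p-n(\theta_\bnd(\ovx)-1)}\,o(1)$. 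The hypothesis $\ul{\alpha}_\bnd>n(\theta_\bnd(\ovx)-1)-t$, equivalently $\beta_0p>n(\theta_\bnd(\ovx)-1)$, lets us pick $\beta'<\beta_0$ with $\beta'p>n(\theta_\bnd(\ovx)-1)$, so all three powers are strictly positive and each term $\to0$ as $\rho\to0^+$.

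The hard part is the analysis of the averages $\fslint{\dom_\rho(\ovx)}(\cdot)$ over a possibly thin, cuspy $\dom$: when $\theta_\bnd(\ovx)>1$, the Lebesgue measure of $\dom_\rho(\ovx)$ can be as small as $\rho^{\,n\theta_\bnd(\ovx)}$, so these averages are \emph{not} comparable to averages against the surface measure, and the push-forward of Lebesgue measure under the nearest-point projection is ``lumpy'' at scales below $\rho$ (not absolutely continuous with respect to $\haus^t$ on $\bnd$). Closing the estimates then requires a careful dyadic layering of $\dom_\rho(\ovx)$ by $d_{\partial\dom}$, Ahlfors-regularity-driven tube-volume bounds layer by layer, some measure-theoretic care with the nearest-point projection, and the precise arithmetic of $\alpha$ so that the exponent bookkeeping balances under exactly the stated hypothesis. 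A secondary technical point is extracting from the proof of \cref{T:TraceExists} the uniform, energy-decomposed form of \eqref{E:TraceHoldCont} and verifying the finite overlap of the dyadic cones, corkscrew points, and bridge paths used in the summations.
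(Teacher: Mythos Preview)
Your plan for (a) is essentially the paper's: split $|Tu(\ovy)-Tu(\ovx)|$ through corkscrew-scale values of $g$, sum dyadically in $\|\ovy-\ovx\|$, and close via a bounded-overlap lemma turning $\int_{\bnd''}G_\delta\,\dd\haus^t$ into $\delta^t|u|^p_{\class{N}^{s(\cdot),p}}$. The paper works with the \emph{averaged} $g_{\rho_k}(\ovx):=\fslint{F_{\lambda_0,\rho_k}(\ovx)}g$ rather than a single corkscrew point, which makes the middle ``bridge'' term and the dependence on $\theta_\bnd$ cleaner (when $\theta_\bnd(\ovx)\neq\theta_\bnd(\ovy)$, placing one side's corkscrew into the other's approach region needs a case split; the paper separates $\bnd''_{1,\rho}$ and $\bnd''_{2,\rho}$ by the sign of $\theta_\bnd(\ovy)-\theta_\bnd(\ovx)$).

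For (b) there is a real gap. Your third term requires the push-forward of Lebesgue measure on $\dom_\rho(\ovx_0)$ under $\vx\mapsto\ovz_\vx$ to be controlled by $\rho^{n-t}\haus^t|_\bnd$, and your second term requires integrating the pointwise constant $C_{\beta',\lambda}(\ovz_\vx)^p\sim M_t(\ovz_\vx)$, which is only $\haus^t$-a.e.\ finite. You correctly flag the first issue (``lumpy'' below scale $d_{\partial\dom}(\vx)$), but the fix you gesture at---dyadic layering plus tube-volume bounds---still commits to $\ovz_\vx$ and does not resolve it: on each layer the push-forward mass on a small boundary ball can exceed $\rho_j^{n-t}\haus^t$ by an uncontrolled factor. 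The paper avoids the projection altogether. After layering $\dom_{\rho_0}(\ovx_0)$ by $E_j=\{\rho_{j+1}\le d_{\partial\dom}<\rho_j\}$ and Besicovitch-covering by balls $B_i$, it covers $\partial\dom\cap\bll_{\rho_0}(\ovx_0)$ by boundary balls $\bnd'_{j,k}$ of radius $\rho_j$ with bounded overlap, assigns each $B_i$ to the first $\bnd'_{j,k}$ containing a chosen nearby $\ovx_i\in\partial\dom$, sets $U_{j,k}:=\bigcup_{i\in I_{j,k}}B_i$, and uses
\[
  \int_{U_{j,k}}|g-Tu(\ovx_0)|^p
  \le\frac{|U_{j,k}|}{\haus^t(\bnd'_{j,k})}\int_{\bnd'_{j,k}}
  \Bigl(\fint_{U_{j,k}}|g-Tu(\ovz)|^p+|Tu(\ovz)-Tu(\ovx_0)|^p\Bigr)\dd\haus^t(\ovz).
\]
The \emph{averaging over $\ovz\in\bnd'_{j,k}$}, instead of committing to the nearest point, is the missing idea: (H4$''$) then gives $\haus^t(\bnd'_{j,k})\gtrsim\rho_j^t$ uniformly, the first piece goes through \cref{L:TugDiffBndG} and \cref{L:supGBnd} (no $M_t$ appears), and the second becomes precisely the $W^{\beta,p}$-Lebesgue-point quantity handled by \cref{C:LebPoints}. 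One minor point: your $\mu(\bll_\rho(\ovx))=o(\rho^t)$ is stronger than what \cref{L:ModGiusti} delivers and stronger than needed; the paper uses only $O(\rho^t)$, with convergence coming from the strictly positive exponent $t-\omega-n(\theta_0-1)$.
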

\begin{remark}\label{R:ThmRmks}
\begin{enumerate}[(a)]
\item As mentioned in \cref{R:HypConn}, if $\dom$ is a $1$-sided NTA domain, then it satisfies (H1) and (H2), with $\bnd=\partial\dom$ and $\theta_\bnd\equiv1$. Assuming $\partial\dom$ is Ahlfors-regular, the hypotheses of \cref{T:TraceExists} and both parts of \cref{T:TraceProp} are satisfied if there exists an $s_0>(n-t)/p$ such that $s(\vx)\ge s_0$ in $\dom$.
\item~\label{R:PricklyExample} For a concrete example, consider $\dom$ and $\bnd$ as described in \cref{E:Domains}(\ref{E:Prickly}), with $\theta_0=2$, so $1<t<2$ and assumptions (H1), (H2), and (H4) are satisfied. Suppose that, for some $\delta_\bnd>0$ and constants $s_0<\infty$,
\[
    \alpha(s(\vx),2)\ge\alpha(s_0,2)
    =\lc\begin{array}{ll}
        2ps_0-p-4, & s_0\le(p+1)/p,\\
        ps_0-3, & s_0>(p+1)/p,
    \end{array}\rt
    \frl\vx\in U_\bnd,
\]
where $U_\bnd:=\bigcup_{\ovx\in\bnd}\dom_{\delta_\bnd}(\ovx)$. \cref{T:TraceExists} and \cref{T:TraceProp}(a) require $(p,s_0)$ to be in a region where $\alpha(s_0,2)>-t$ (shaded region in \cref{F:Alpha1}). For \cref{T:TraceProp}(b), we need $\alpha(s_0,2)>2-t$ (shaded region in \cref{F:Alpha2}).
\begin{figure}[h]
    \centering
    \hspace{0pt}
    \parbox[b]{2.73in}{
    \scalebox{.77}{\includegraphics[trim=150pt 565pt 200pt 125pt,clip]{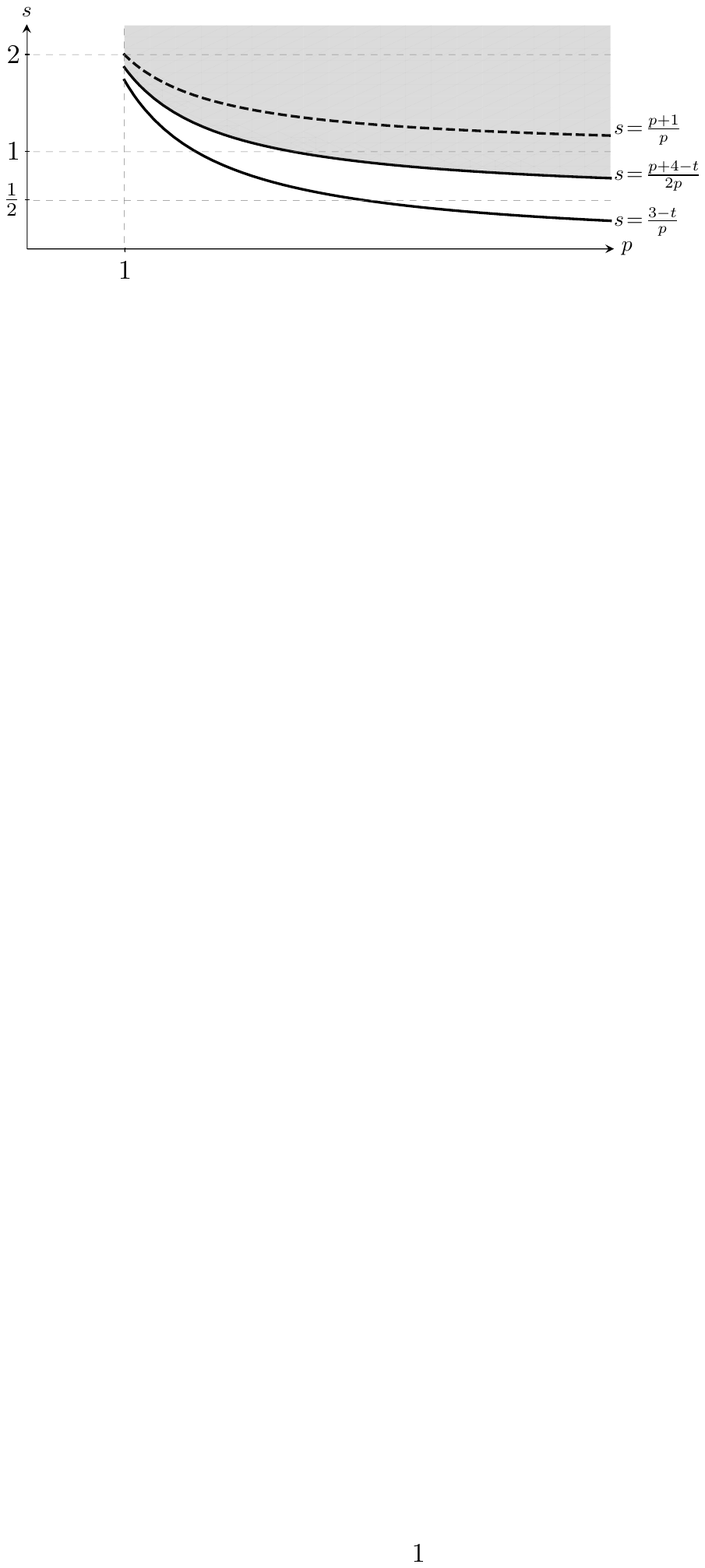}}
    \caption{Region where $\alpha(s,2)>-t$}
    \label{F:Alpha1}}
    \hspace{10pt}
    \parbox[b]{2.73in}{
    \scalebox{.77}{\includegraphics[trim=150pt 565pt 200pt 125pt,clip]{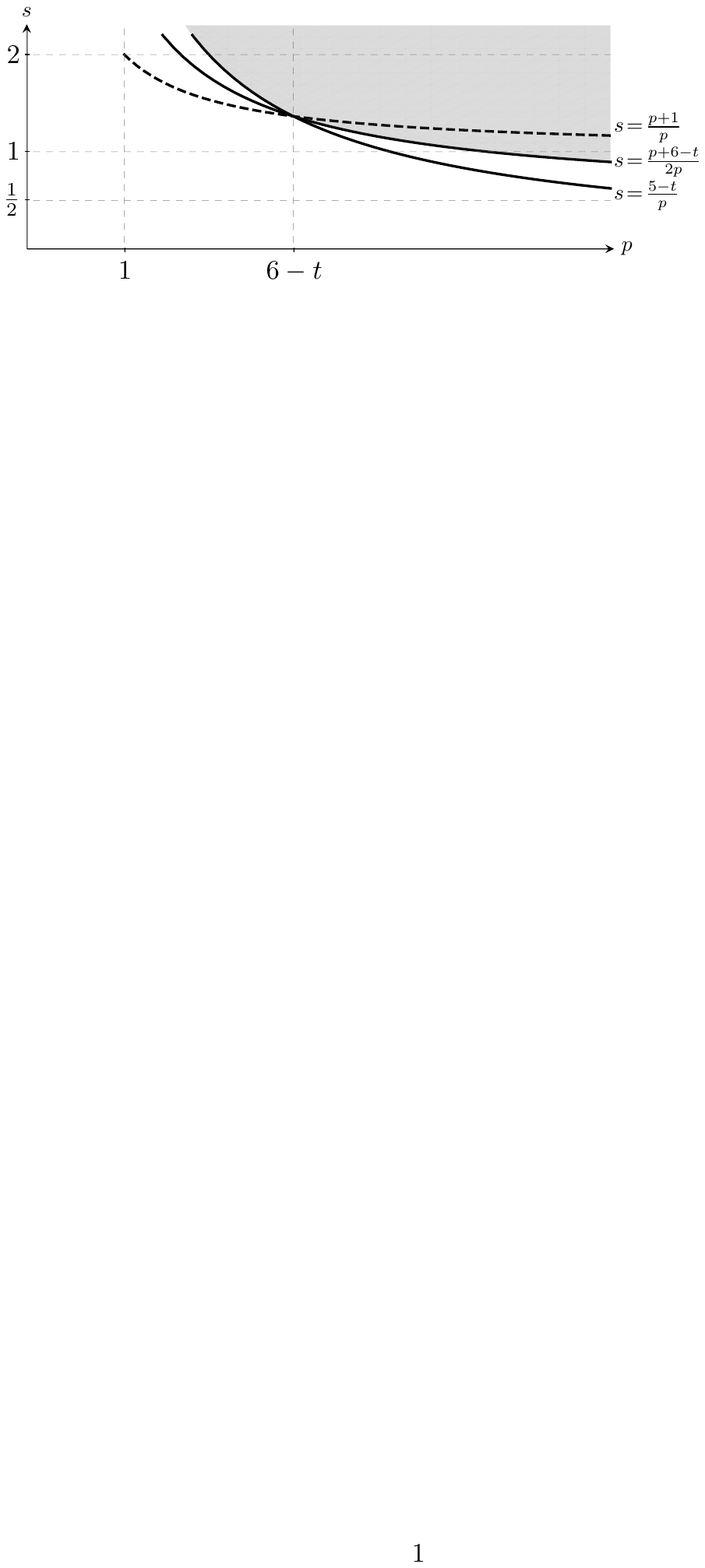}}
    \caption{Region where $\alpha(s,2)>2-t$}
    \label{F:Alpha2}
    }
\end{figure}\\
(Note: Generally, the curves do not have a common point of intersection.)
\item In Section~\ref{SS:Overview}, it was mentioned that $L^p_{\loc}(\dom)\subseteq\class{N}^{s(\cdot),p}(\dom)$. \cref{T:TraceProp}(b) implies that, for sufficiently small $\rho>0$, we find $u\in L^p(\dom_\rho(\ovx))$, for $\haus^t$-a.e. $\ovx\in\bnd$ (see \cref{R:TraceLebProp}(\ref{R:uIntegrability})). It is unclear, however, whether $L^p(\dom)\subseteq\class{N}^{s(\cdot),p}(\dom)$, even under the assumptions of \cref{T:TraceProp}(b).
\end{enumerate}
\end{remark}

\subsection{Organization of Paper}\label{SS:Organization}

In the next section, we establish some results needed for the main theorems. \cref{L:guMeansDiff}, in particular, provides the key bounds for the rate of change for $g$ within an approach region $Q_{\lambda}(\ovx)$. The main result in Section~\ref{S:Exists} is \cref{T:gGenResults}, which is a slight refinement of \cref{T:TraceExists}. As a corollary, we also show that the Lebesgue point property holds for the trace when the means are taking over $Q_{\lambda,\rho}(\ovx)$. Both parts of \cref{T:TraceProp} follow from the main results in Section~\ref{S:Props}. Part (a) is a consequence of \cref{T:uInWbp}, where a more precise connection between the fractional differentiability of $Tu$ and certain subsets of $\bnd$ is provided. The Lebesgue point property at points contained in a relative neighborhood of $\bnd$ is proved in \cref{T:LebProp}. The paper concludes with an Appendix, in Section~\ref{S:Appendix}, where the claims made in \cref{E:Domains}(\ref{E:Prickly}) are justified.

\section{Supporting Results}\label{S:SupResults}

For the remainder of the paper, fix $u\in\class{N}^{s(\cdot),p}(\dom)$ and put $\rho_\bnd:=\lp\frac{1}{3C_\bnd}\rp\delta_\bnd$. We use $c$ to denote a constant that may change from line to line but, unless otherwise indicated, is independent of the functions $s$ and $\theta_\bnd$ and the parameters $0<\eta\le\eta_0$, $0<\lambda\le\lambda_0$, and $\rho>0$. In particular, it may depend on $n$, $t$, $p$, $A_\bnd$, $C_\bnd$, and $\delta_\bnd$.

\begin{lemma}[Modified Giusti's Lemma]\label{L:ModGiusti}
Suppose that $\nu$ is an $\re$-valued function satisfying the following:
\begin{enumerate}[(i)]
\item The domain for $\nu$ includes all open sets in $\ren$;
\item $\nu$ is finite, nonnegative, and nondecreasing;
\item $\nu$ is countably superadditive; i.e. if $\{U_j\}_{j=1}^\infty$ are disjoint open subsets of $\ren$, then
\[
  \nu\lp\bigcup_{j=1}^\infty U_j\rp\ge\sum_{j=1}^\infty\nu(U_j);
\]
\end{enumerate}
Let $\tau\ge0$ be given, and set
\[
  S:=\lc\vx\in\ren
    :\limsup_{\rho\to0^+}\rho^{-\tau}\nu(\bll_\rho(\vx))=\infty\rc.
\]
Then $\dim_{\haus}(S)\le\tau$ and $\haus^\tau(S)=0$.
\end{lemma}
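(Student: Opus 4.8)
The plan is to prove this by a standard Vitali-type covering argument, the same way one establishes the classical fact that a set where the upper $\tau$-density of a measure is infinite must be $\haus^\tau$-null. Fix $\tau\ge 0$, let $S$ be as in the statement, and fix an arbitrary $N>0$; it suffices to show $\haus^\tau(S\cap B)=0$ for every fixed bounded open ball $B$, and in fact it suffices to bound the $\tau$-dimensional Hausdorff pre-measure at an arbitrary small scale. So fix $\vep>0$ and a large threshold $M<\infty$, to be sent to $\infty$ at the end. For each $\vx\in S\cap B$, the definition of $S$ gives arbitrarily small radii $\rho$ with $\nu(\bll_\rho(\vx))>M\rho^\tau$; in particular we can choose such a $\rho=\rho(\vx)<\vep$ with $\bll_\rho(\vx)\subseteq B'$, where $B'$ is a fixed bounded open set slightly larger than $B$. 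This produces a Vitali cover of $S\cap B$ by closed (or open) balls.

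Next I would apply the Vitali covering lemma (the $5r$-covering version, which needs no measure-theoretic hypotheses on $\nu$, only that we are in $\ren$) to extract a countable pairwise-disjoint subfamily $\{\bll_{\rho_j}(\vx_j)\}_{j=1}^\infty$ with $S\cap B\subseteq\bigcup_j \bll_{5\rho_j}(\vx_j)$. Then estimate, using the definition of Hausdorff pre-measure with gauge sets of diameter at most $10\vep$:
\[
  \haus^\tau_{10\vep}(S\cap B)
  \le
  \sum_{j=1}^\infty \bigl(\diam \bll_{5\rho_j}(\vx_j)\bigr)^\tau
  \le
  10^\tau \sum_{j=1}^\infty \rho_j^\tau
  <
  \frac{10^\tau}{M}\sum_{j=1}^\infty \nu\bigl(\bll_{\rho_j}(\vx_j)\bigr).
\]
Here is where hypotheses (i)–(iii) enter: the balls $\bll_{\rho_j}(\vx_j)$ are disjoint open subsets of $\ren$ (so $\nu$ is defined on each and on their union by (i)), and countable superadditivity (iii) together with monotonicity (ii) gives $\sum_j \nu(\bll_{\rho_j}(\vx_j)) \le \nu\bigl(\bigcup_j \bll_{\rho_j}(\vx_j)\bigr) \le \nu(B')<\infty$, the last finiteness from (ii). Hence $\haus^\tau_{10\vep}(S\cap B) \le 10^\tau \nu(B')/M$. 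Letting $\vep\to 0^+$ gives $\haus^\tau(S\cap B)\le 10^\tau\nu(B')/M$, and then letting $M\to\infty$ gives $\haus^\tau(S\cap B)=0$. Since $B$ was an arbitrary bounded ball, $\haus^\tau(S)=0$, and this immediately forces $\dim_{\haus}(S)\le\tau$.

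The argument has essentially no hard step — it is the textbook density-bound argument — so the only thing requiring care is the bookkeeping that makes superadditivity applicable: one must ensure the selected balls are genuinely pairwise disjoint (handed to us by the Vitali lemma) and contained in a single fixed bounded open set on which $\nu$ is finite, so that the telescoped sum $\sum_j\nu(\bll_{\rho_j}(\vx_j))$ is controlled by a single finite quantity independent of $M$ and $\vep$. A minor subtlety is that $\nu$ is only assumed finite (not, say, a Borel measure), so I must avoid invoking any regularity or continuity of $\nu$; monotonicity and countable superadditivity on disjoint open sets are exactly — and only — what the estimate above uses. One should also note at the outset that $S$ is contained in a countable union of bounded balls, reducing to the bounded case, and that $\haus^\tau_{\delta}$ is monotone in $\delta$ so that the limit $\vep\to0^+$ legitimately computes $\haus^\tau$.
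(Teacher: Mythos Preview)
Your proof is correct and follows essentially the same approach as the paper: a Vitali $5r$-covering argument, then bounding $\sum_j \rho_j^\tau$ by $\tfrac{1}{M}\sum_j \nu(\bll_{\rho_j}(\vx_j))$ and invoking superadditivity plus monotonicity. The only difference is that your preliminary localization to a bounded ball $B$ (and the enlarged $B'$) is unnecessary here, since hypothesis~(ii) already gives $\nu(\ren)<\infty$; the paper simply bounds $\sum_j \nu(\bll_{\rho_j}(\vx_j)) \le \nu(\ren)$ globally and concludes in one stroke.
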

\begin{proof}
Recall that the $\haus^\tau$-outer measure $S\subset\ren$ is defined by
\[
    \haus^\tau(S)
    :=
    \lim_{\rho\to0^+} \haus^\tau_\rho(S),
\]
where
\begin{equation}\label{E:HausRho}
    \haus^\tau_\rho(S)
    :=
    \inf\lc\sum_{j=1}^\infty\diam(U_j)^\tau
        : \diam(U_j)<\rho\nd S\subseteq\bigcup_{j=1}^\infty U_j\rc.
\end{equation}
We will use Vitali's Covering Lemma. Let $0<K<\infty$ and $\rho>0$ be given. For each $\vx\in S_\tau$, we may select $0<r(\vx)<\rho$ such that
\[
    \nu(\bll_{r(\vx)}(\vx))>K\rho^\tau.
\]
Thus $S\subseteq\bigcup_{\vx\in S_\tau}\bll_{r(\vx)}(\vx)$. By Vitali's Covering Lemma, we may extract a countable sets $J$ and  $\lc\vx_j\rc_{j\in J}\subseteq S_\tau$ such that the sets
$\lc\bll_{r_j}(\vx_j)\rc_{j\in J}$, with $r_j:=r(\vx_j)$, are mutually disjoint and
\[
    \bigcup_{\vx\in S}\bll_{r(\vx)}(\vx)\subseteq\bigcup_{j\in J}\bll_{5r_j}(\vx_j).
\]
It follows that
\[
    \haus_\rho^\tau(S)
    \le
    \sum_{j\in J}\lp5r_j\rp^\tau
    =
    5^\tau\sum_{j\in J}r_j^\tau
    <\frac{5^\tau}{K}\sum_{j\in J}\nu\lp\bll_{r_j}(\vx_j)\rp
    \le
    \frac{5^\tau}{K}\nu\lp\bigcup_{j\in J}\bll_{r_j}(\vx_j)\rp
    \le
    \frac{5^\tau}{K}\nu(\ren).
\]
Since $0<K<\infty$ was arbitrary and $\nu(\ren)<\infty$, we conclude that $\haus_\rho^\tau(S)=0$. Taking the limit, as $\rho\to0^+$, yields the result.
\end{proof}
\begin{remark}
Our proof for \cref{L:ModGiusti} is a modification of one found, for example, in~\cite{DuzGasMin:04a} (see also~\cite{Giu:69a} for the original version). In~\cite{DuzGasMin:04a}, it is shown that the set
\[
    \lc\vx\in\dom:\limsup_{\rho\to0^+}\rho^{-\tau}\nu(\bll_r(\vx))>0\rc\subseteq S
\]
is a $\haus^\tau$-null set under the additional assumption that $\lim_{\vep\to0}\nu(U_\vep)=0$ whenever $\{U_\vep\}_{\vep>0}$ is a family of open sets satisfying $\lim_{\vep\to0}|U_\vep|=0$. (Note: A generalization of the spherical Hausdorff measure is actually considered in~\cite{DuzGasMin:04a} and $\haus^\tau$ is a special case.)
\end{remark}

We will also need the following fact, which follows from Besicovitch's covering theorem and the fact that there is a constant $c=c(n)$ such that packing number of the unit ball with balls of radius $r>0$ is bounded by $cr^{-n}$.
\begin{lemma}\label{L:BoundedOverlap}
There exists a constant $c=c(n)$ with the following property: for any $E\subseteq\ren$ and $r>0$ and $\Lambda\ge1$, there exists a countable set $I$ and set of points $\{\vx_i\}_{i\in I}\subseteq E$ such that $E\subseteq\bigcup_{i\in I}\bll_r(\vx_i)$ and $\sup_{\vx\in\ren}\sum_{i\in I}\chi_{\bll_{\Lambda r}(\vx_i)}(\vx)\le c\Lambda^n$.
\end{lemma}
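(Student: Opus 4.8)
The plan is to build the cover from a maximal separated set, which reduces the overlap estimate to a one-line volume comparison (this is the ``packing number'' bound referenced in the statement, and it makes the appeal to Besicovitch's theorem unnecessary). Fix $r>0$ and $\Lambda\ge1$. First I would choose, by Zorn's lemma (or a greedy construction using separability of $\ren$), a set $\{\vx_i\}_{i\in I}\subseteq E$ that is maximal subject to $\|\vx_i-\vx_j\|_{\ren}\ge r$ for all distinct $i,j\in I$. The half-radius balls $\bll_{r/2}(\vx_i)$ are then pairwise disjoint, so $I$ is at most countable. Maximality forces $E\subseteq\bigcup_{i\in I}\bll_r(\vx_i)$: any $\vy\in E$ lying outside every $\bll_r(\vx_i)$ would satisfy $\|\vy-\vx_i\|_{\ren}\ge r$ for all $i\in I$, and adjoining $\vy$ would contradict maximality.

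Next I would bound the multiplicity of the $\Lambda$-dilated balls. Fix $\vx\in\ren$ and set $I_\vx:=\{i\in I:\vx\in\bll_{\Lambda r}(\vx_i)\}$, so that $\sum_{i\in I}\chi_{\bll_{\Lambda r}(\vx_i)}(\vx)=\card(I_\vx)$. For each $i\in I_\vx$ we have $\vx_i\in\bll_{\Lambda r}(\vx)$, hence $\bll_{r/2}(\vx_i)\subseteq\bll_{\Lambda r+r/2}(\vx)\subseteq\bll_{\tfrac{3}{2}\Lambda r}(\vx)$, where the second inclusion uses $\Lambda\ge1$. Since the balls $\{\bll_{r/2}(\vx_i)\}_{i\in I_\vx}$ are pairwise disjoint, comparing Lebesgue measures and using translation invariance gives $\card(I_\vx)\,(r/2)^n\le(\tfrac{3}{2}\Lambda r)^n$, i.e. $\card(I_\vx)\le3^n\Lambda^n$. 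Taking $c=c(n):=3^n$ then yields the claim for all $\vx\in\ren$ simultaneously.

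\textbf{Expected main obstacle.} There is essentially no hard step: the content of the lemma is just the fact that dilating a maximal $r$-separated family by a factor $\Lambda$ produces a family of controlled multiplicity. The only points that need a little care are (i) the countability of $I$, which follows from separability of $\ren$ together with disjointness of the $\bll_{r/2}(\vx_i)$, and (ii) keeping the constant dependent only on $n$ while scaling like $\Lambda^n$ --- here the hypothesis $\Lambda\ge1$ is precisely what lets one absorb the additive term $r/2$ into a fixed multiple of $\Lambda r$. If one instead prefers the route indicated in the statement, one applies Besicovitch's covering theorem to $\{\bll_r(\vx):\vx\in E\}$ to extract a countable cover that splits into $N(n)$ subfamilies of pairwise disjoint balls, then runs the same volume-packing count within each subfamily; this also gives a constant of the form $c(n)\Lambda^n$, but with a worse dimensional constant and an extra ingredient.
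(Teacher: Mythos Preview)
Your argument is correct. It differs from the route the paper indicates: the paper does not write out a proof but states that the lemma ``follows from Besicovitch's covering theorem and the fact that \ldots\ the packing number of the unit ball with balls of radius $r>0$ is bounded by $cr^{-n}$.'' In that approach, Besicovitch supplies the countable subcover of $E$ by $r$-balls with centers in $E$ and dimensional bounded overlap, and then the packing bound is applied to control the overlap after dilation by $\Lambda$. Your route replaces the Besicovitch step by the maximal $r$-separated set, which simultaneously yields the cover (by maximality) and the disjointness of the $\bll_{r/2}(\vx_i)$, so the volume-comparison packing bound is all that remains. This is strictly more elementary---it avoids the nontrivial Besicovitch theorem entirely---and gives the explicit constant $c(n)=3^n$, whereas the Besicovitch route carries the (implicit, dimension-dependent) Besicovitch constant as an extra factor. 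Both approaches produce the same $\Lambda^n$ scaling, and your observation that $\Lambda\ge1$ is exactly what absorbs the additive $r/2$ is the right reason the constant stays purely dimensional.
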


Recall that $\bnd_\rho(\ovx)=\bnd\cap\bll_\rho(\ovx)$, for each $\rho>0$ and $\ovx\in\bnd$. The following result follows from the argument for statement (2) of Theorem 1.1 in~\cite{Hu:03a}. A proof is included for the sake of completeness.
\begin{theorem}
    Suppose that $\bnd$ satisfies (H4$''$); i.e. is lower Ahlfors-regular; and that $v\in W^{\beta,p}(\bnd)$, for some $\beta>0$. Define $v_{\beta}:\bnd\to\reo$ by
    \[
        v_{\beta}(\ovx):=\lp\slint{\bnd_1(\ovx)}
            \frac{|v(\ovz)-v(\ovx)|^p}{\|\ovz-\ovx\|_{\ren}^{t+\beta p}}
            \dd\haus^t(\ovz)\rp^\frac{1}{p}.
    \]
    Then $v_\beta\in L^p(\bnd)$ and
    \begin{equation}\label{E:vHoldCont}
        |v(\ovy)-v(\ovx)|
        \le
        c\|\ovy-\ovx\|_{\ren}^{\beta}\lp v_\beta(\ovy)+v_\beta(\ovx)\rp,
        \frl\ovx,\ovy\in\bnd.
    \end{equation}
\end{theorem}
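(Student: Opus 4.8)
The plan is to prove the Hölder-type estimate \eqref{E:vHoldCont} first, by a standard ``chaining through intermediate balls'' argument adapted to the lower Ahlfors-regular setting, and then to deduce $v_\beta\in L^p(\bnd)$ as a consequence of $v\in W^{\beta,p}(\bnd)$ together with a Fubini argument. The key device for \eqref{E:vHoldCont} is the comparison of $v(\ovx)$ and $v(\ovy)$ to the average of $v$ over $\bnd_r(\ovx)$, where $r:=\|\ovy-\ovx\|_{\ren}$. Concretely, set $\ov v_{r}(\ovx):=\fslint{\bnd_{r}(\ovx)}v\,\dd\haus^t$ and estimate $|v(\ovx)-\ov v_{r}(\ovx)|$ by Jensen's inequality:
\[
    |v(\ovx)-\ov v_r(\ovx)|^p
    \le
    \fslint{\bnd_r(\ovx)}|v(\ovz)-v(\ovx)|^p\,\dd\haus^t(\ovz).
\]
Using lower Ahlfors-regularity (H4$''$) to bound $\haus^t(\bnd_r(\ovx))^{-1}\le A_\bnd r^{-t}$ and the crude bound $\|\ovz-\ovx\|_{\ren}<r$ to insert the weight $\|\ovz-\ovx\|_{\ren}^{-t-\beta p}\ge r^{-t-\beta p}$ inside the integral, one gets
\[
    |v(\ovx)-\ov v_r(\ovx)|^p
    \le
    A_\bnd\, r^{-t}\cdot r^{t+\beta p}\slint{\bnd_r(\ovx)}
        \frac{|v(\ovz)-v(\ovx)|^p}{\|\ovz-\ovx\|_{\ren}^{t+\beta p}}\,\dd\haus^t(\ovz)
    \le
    A_\bnd\, r^{\beta p}\,v_\beta(\ovx)^p,
\]
the last step using $r\le 1$ (so $\bnd_r(\ovx)\subseteq\bnd_1(\ovx)$) — here I should note that for $\ovx,\ovy$ with $r>1$ a separate trivial estimate handles the claim, or one rescales; I will treat the $r\le 1$ case and remark on the other. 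The same bound holds with $\ovx$ replaced by $\ovy$. It remains to compare the two averages $\ov v_r(\ovx)$ and $\ov v_r(\ovy)$: since $\bnd_r(\ovx)\cap\bnd_r(\ovy)\supseteq\bnd_{r/2}(\tfrac12(\ovx+\ovy))\cap\bnd$ is nonempty and, by (H4$''$) again, has $\haus^t$-measure comparable to $r^t$, one writes
\[
    |\ov v_r(\ovx)-\ov v_r(\ovy)|
    \le
    \fslint{\bnd_r(\ovx)}|v(\ovz)-\ov v_r(\ovy)|\,\dd\haus^t(\ovz),
\]
bounds the common-overlap contribution, and iterates the same Jensen/weight-insertion trick; the overlap has measure $\gtrsim r^t$ while $\bnd_r(\ovx),\bnd_r(\ovy)$ have measure $\lesssim r^t$ (the upper bound I get for free from $\bnd\subseteq\ren$ and a packing bound, or I can simply invoke (H4$'$) if it is in force; if only (H4$''$) is assumed I use that $\haus^t(\bnd_r(\ovx))\le\haus^t(\bnd_{2r}(\ovz_0))\le$ a constant times $r^t$ is \emph{not} automatic, so I will route the overlap comparison through a point $\ovz_0$ in the intersection and the triangle inequality $\|\ovz-\ovz_0\|\le 2r$, giving $|\ov v_r(\ovx)-v(\ovz_0)|\le A_\bnd^{1/p}(2r)^\beta v_\beta(\ovz_0)$ and symmetrically, which avoids needing an upper regularity bound entirely). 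Summing the three pieces via the triangle inequality yields \eqref{E:vHoldCont} with a constant $c$ depending only on $A_\bnd$, $p$, and $t$.

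For the integrability claim $v_\beta\in L^p(\bnd)$, I would simply apply Tonelli's theorem:
\[
    \slint{\bnd}v_\beta(\ovx)^p\,\dd\haus^t(\ovx)
    =
    \slint{\bnd}\slint{\bnd_1(\ovx)}
        \frac{|v(\ovz)-v(\ovx)|^p}{\|\ovz-\ovx\|_{\ren}^{t+\beta p}}
        \,\dd\haus^t(\ovz)\,\dd\haus^t(\ovx)
    \le
    |v|_{W^{\beta,p}(\bnd)}^p
    <\infty,
\]
since $\bnd_1(\ovx)\subseteq\bnd$ for every $\ovx$, so the inner domain of integration is a subset of $\bnd$ and we are just discarding the (nonnegative) contribution from pairs at distance $\ge 1$. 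This step is essentially immediate and uses only that $v\in W^{\beta,p}(\bnd)$ in the sense of \cref{D:SobSlo}.

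The main obstacle is the overlap comparison of the two local averages: one needs both that $\bnd_r(\ovx)$ and $\bnd_r(\ovy)$ share a chunk of measure comparable to $r^t$ (which is exactly where lower Ahlfors-regularity is essential — without it the averages over the two balls could be nearly unrelated) and a clean way to bound the difference of an average over a large set by a quantity controlled by $v_\beta$ at a single anchor point. Routing everything through a fixed anchor point $\ovz_0\in\bnd_r(\ovx)\cap\bnd_r(\ovy)$ with the triangle inequality $\|\ovz-\ovz_0\|_{\ren}\le 2r\le 2$ (so the relevant balls still sit inside $\bnd_1(\ovz_0)$ after at most doubling, or more carefully inside $\bnd_2(\ovz_0)$, which forces a minor adjustment of the truncation radius or an extra covering step) sidesteps any need for upper regularity. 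A small bookkeeping point to watch is the passage from the truncated seminorm $v_\beta$ (integration over $\bnd_1(\ovx)$) to balls of radius $2r$: one keeps $r\le 1/2$ throughout and notes that the case $\|\ovy-\ovx\|_{\ren}>1/2$ follows from $|v(\ovy)-v(\ovx)|\le|v(\ovy)-\ov v_{1/2}(\ovz_0)|+|v(\ovx)-\ov v_{1/2}(\ovz_0)|$ applied with a $\ovz_0$ near the segment, absorbing constants. These are routine once the anchor-point scheme is fixed.
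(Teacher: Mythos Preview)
Your $L^p$ argument for $v_\beta$ is fine and matches the paper. The gap is in the pointwise inequality. Your three-term decomposition
\[
    |v(\ovy)-v(\ovx)|
    \le
    |v(\ovy)-\ov v_r(\ovy)|+|\ov v_r(\ovy)-\ov v_r(\ovx)|+|\ov v_r(\ovx)-v(\ovx)|
\]
forces you to control the middle term, and your anchor-point device produces the bound $|\ov v_r(\ovx)-v(\ovz_0)|\le c\,r^\beta v_\beta(\ovz_0)$. That is the wrong quantity: the inequality~\eqref{E:vHoldCont} must be in terms of $v_\beta(\ovx)$ and $v_\beta(\ovy)$ only, and there is no reason $v_\beta(\ovz_0)$ should be controlled by $v_\beta(\ovx)+v_\beta(\ovy)$ for a $\ovz_0$ that depends on the pair $(\ovx,\ovy)$. (A secondary issue: the overlap $\bnd\cap\bll_{r/2}(\tfrac12(\ovx+\ovy))$ need not be nonempty, since neither $\ovx$ nor $\ovy$ lies strictly inside that ball; this is repairable by enlarging radii, but the $v_\beta(\ovz_0)$ problem is not.)

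The paper's argument sidesteps all of this by never introducing a third anchor. With $\rho:=\|\ovy-\ovx\|\le\tfrac12$, average the pointwise triangle inequality $|v(\ovy)-v(\ovx)|\le|v(\ovz)-v(\ovy)|+|v(\ovz)-v(\ovx)|$ over $\ovz\in\bnd_\rho(\ovy)$. The first averaged term is handled exactly as in your Step~1, giving $c\rho^\beta v_\beta(\ovy)$. For the second, use $\bnd_\rho(\ovy)\subseteq\bnd_{2\rho}(\ovx)$ together with the lower Ahlfors bound $\haus^t(\bnd_\rho(\ovy))\ge A_\bnd^{-1}\rho^t$ to obtain
\[
    \fslint{\bnd_\rho(\ovy)}|v(\ovz)-v(\ovx)|^p\,\dd\haus^t(\ovz)
    \le
    A_\bnd\rho^{-t}\slint{\bnd_{2\rho}(\ovx)}|v(\ovz)-v(\ovx)|^p\,\dd\haus^t(\ovz)
    \le
    c\rho^{\beta p}v_\beta(\ovx)^p,
\]
inserting the weight $\|\ovz-\ovx\|^{-t-\beta p}\ge(2\rho)^{-t-\beta p}$ and using $2\rho\le1$. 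No upper regularity and no auxiliary point are needed. Equivalently, you could keep a single average and use the two-term split $|v(\ovy)-v(\ovx)|\le|v(\ovy)-\ov v_\rho(\ovy)|+|\ov v_\rho(\ovy)-v(\ovx)|$; the second term is then bounded directly by $c\rho^\beta v_\beta(\ovx)$ via the same inclusion $\bnd_\rho(\ovy)\subseteq\bnd_{2\rho}(\ovx)$.
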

\begin{proof}
From the definition of $W^{p,\beta}(\bnd)$, it is immediate that $v_\beta\in L^p(\bnd)$. To verify~\eqref{E:vHoldCont}, it is sufficient to consider $\ovx,\ovy\in\bnd$ such that $\rho:=\|\ovx-\ovy\|_{\ren}\le\frac{1}{2}$. Since $\bll_\rho(\ovy)\subseteq\bll_{2\rho}(\ovx)$, by assumption (H4$''$), we have
\begin{align*}
    |v(\ovy)-v(\ovx)|^p
    \le&
    c\fslint{\bnd_\rho(\ovy)}|v(\ovz)-v(\ovy)|^p\dd\haus^t(\ovz)
    +c\fslint{\bnd_{2\rho}(\ovx)}|v(\ovz)-v(\ovx)|^p\dd\haus^t(\ovz)\\
    \le&
    c\rho^{\beta p}\ns\slint{\bnd_\rho(\ovy)}\ns
            \frac{|v(\ovz)-v(\ovy)|^p}{\|\ovz-\ovy\|_{\ren}^{t+\beta p}}
            \dd\haus^t(\ovz)
    +
    c\rho^{\beta p}\ns\slint{\bnd_{2\rho}(\ovx)}\ns
            \frac{|v(\ovz)-v(\ovx)|^p}{\|\ovz-\ovx\|_{\ren}^{t+\beta p}}
            \dd\haus^t(\ovz)\\
    \le&
    c\|\ovy-\ovx\|_{\ren}^p\lp v_\beta(\ovy)+v_\beta(\ovx)\rp^p.
\end{align*}
\end{proof}
Our proof of \cref{T:TraceProp}(b) requires the following
\begin{corollary}\label{C:LebPoints}
    If $\bnd$ satisfies (H4$''$) and $v\in W^{\beta,p}(\bnd)$, then for any $0<\beta'<\beta$
\[
    \lim_{\rho\to0^+}\rho^{-p\beta'}\fslint{\bnd\cap\bll_\rho(\ovx_0)}
        |v(\ovx)-v(\ovx_0)|^p\dd\haus^t(\ovx)=0,
    \quad\text{for $\haus^t$-a.e. }\ovx_0\in\bnd.
\]
\end{corollary}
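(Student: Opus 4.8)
The plan is to derive this from the Modified Giusti's Lemma (\cref{L:ModGiusti}), applied to the finite measure $\nu(U):=\int_{\bnd\cap U}v_\beta(\ovz)^p\,\dd\haus^t(\ovz)$, with the H\"older-type inequality~\eqref{E:vHoldCont} of the preceding theorem supplying the pointwise bound. First I would fix $\ovx_0\in\bnd$ and, for $\ovx\in\bnd_\rho(\ovx_0)$, combine $\|\ovx-\ovx_0\|_{\ren}<\rho$ with~\eqref{E:vHoldCont} and the elementary inequality $(a+b)^p\le2^{p-1}(a^p+b^p)$ to get $|v(\ovx)-v(\ovx_0)|^p\le c\rho^{\beta p}\bigl(v_\beta(\ovx)^p+v_\beta(\ovx_0)^p\bigr)$. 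Averaging over $\bnd_\rho(\ovx_0)$ and invoking the lower Ahlfors bound (H4$''$) in the form $\haus^t(\bnd_\rho(\ovx_0))\ge A_\bnd^{-1}\rho^t$ (valid for $\rho\le\diam(\bnd)$) then yields
\[
    \rho^{-p\beta'}\fslint{\bnd_\rho(\ovx_0)}|v(\ovx)-v(\ovx_0)|^p\,\dd\haus^t(\ovx)
    \le
    c\,\rho^{(\beta-\beta')p-t}\,\nu(\bll_\rho(\ovx_0))
    +c\,\rho^{(\beta-\beta')p}\,v_\beta(\ovx_0)^p .
\]
Since $v_\beta\in L^p(\bnd)$ by the theorem, $\nu$ is finite; it is also nonnegative, nondecreasing, defined on all Borel (in particular open) subsets of $\ren$, and countably additive---hence superadditive---on disjoint open sets, so it satisfies the hypotheses of \cref{L:ModGiusti}. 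Moreover $v_\beta(\ovx_0)<\infty$ for $\haus^t$-a.e.\ $\ovx_0$, and because $\beta'<\beta$ the second term on the right vanishes as $\rho\to0^+$ at every such $\ovx_0$.

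It then remains to show that $\rho^{(\beta-\beta')p-t}\nu(\bll_\rho(\ovx_0))\to0$ for $\haus^t$-a.e.\ $\ovx_0\in\bnd$. Writing $\gamma:=(\beta-\beta')p>0$, I would apply \cref{L:ModGiusti} with $\tau:=\max\{0,\,t-\gamma/2\}$, so that $0\le\tau\le t$. The lemma gives $\haus^\tau(S)=0$ for $S:=\{\ovx:\limsup_{\rho\to0^+}\rho^{-\tau}\nu(\bll_\rho(\ovx))=\infty\}$, and since $\tau\le t$ this forces $\haus^t(S)=0$. For $\ovx_0\notin S$ choose $M(\ovx_0)<\infty$ with $\rho^{-\tau}\nu(\bll_\rho(\ovx_0))\le M(\ovx_0)$ for all sufficiently small $\rho$; then
\[
    \rho^{(\beta-\beta')p-t}\nu(\bll_\rho(\ovx_0))
    =\rho^{\gamma-t+\tau}\,\rho^{-\tau}\nu(\bll_\rho(\ovx_0))
    \le M(\ovx_0)\,\rho^{\gamma-t+\tau},
\]
where $\gamma-t+\tau\ge\gamma/2>0$ when $\tau=t-\gamma/2$, and $\gamma-t+\tau=\gamma-t>0$ when $\tau=0$ (i.e.\ $\gamma>2t$; there even the crude bound $\nu(\bll_\rho(\ovx_0))\le\nu(\ren)<\infty$ already suffices). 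In either case the right-hand side tends to $0$. Combining this with the displayed estimate completes the proof on the full-$\haus^t$-measure set $\bnd\setminus(S\cup\{\,v_\beta=\infty\,\})$, using only (H4$''$), consistently with the statement.

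The one genuine obstacle is that \cref{L:ModGiusti}, as stated, only asserts that $\limsup_{\rho\to0^+}\rho^{-\tau}\nu(\bll_\rho(\cdot))$ is \emph{finite}---not that it vanishes---off a $\haus^\tau$-null set, whereas the Lebesgue point conclusion needs genuine decay. I would get around this through the exponent sacrifice above: taking $\tau=t-\gamma/2$ strictly larger than the ``critical'' value $t-\gamma$ leaves a spare factor $\rho^{\gamma/2}\to0$ that upgrades the mere boundedness of $\rho^{-\tau}\nu(\bll_\rho(\ovx_0))$ into the required convergence to $0$. Everything else---the pointwise inequality, the averaging, and the verification of the Giusti hypotheses---is routine.
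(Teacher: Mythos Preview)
Your argument is correct and follows essentially the same route as the paper: you define the same measure $\nu(U)=\int_{\bnd\cap U}v_\beta^p\,\dd\haus^t$, invoke the Modified Giusti Lemma, and combine with the pointwise H\"older-type estimate~\eqref{E:vHoldCont} and the lower Ahlfors bound (H4$''$). The only difference is that your ``exponent sacrifice'' to $\tau=t-\gamma/2$ is unnecessary: the paper simply takes $\tau=t$, so that Giusti gives $\limsup_{\rho\to0^+}\rho^{-t}\nu(\bll_\rho(\ovx_0))<\infty$ for $\haus^t$-a.e.\ $\ovx_0$, and then the \emph{full} factor $\rho^{(\beta-\beta')p}$ (already present in your displayed estimate) supplies the decay---no need to split $\gamma$ in half or worry about passing from $\haus^\tau$-null to $\haus^t$-null.
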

\begin{proof}
The previous theorem implies $v_\beta\in L^p(\bnd)$, so $|v_\beta(\ovx_0)|<\infty$, for $\haus^t$-a.e. $\ovx_0\in\bnd$. Given an open set $U\subseteq\re^n$, define
\[
    \nu(U):=\slint{\bnd\cap U}v_\beta(\ovx)^p\dd\haus^t(\ovx).
\]
Then $\nu$ satisfies the hypotheses of \cref{L:ModGiusti}. Under assumption (H4$''$), we deduce that
\[
    \lim_{\rho\to0^+}\fslint{\bnd_\rho(\ovx_0)}v_\beta(\ovx)^p\dd\haus^t(\ovx)
    \le
    c\rho^{-t}\nu(\bll_\rho(\ovx_0))<\infty,
    \quad\text{ for $\haus^t$-a.e. }\ovx_0\in\bnd.
\]
The result now follows from~\eqref{E:vHoldCont}.
\end{proof}

The next lemma collects some statements that are straightforward consequences of the definitions and assumptions.
\begin{lemma}\label{L:Hypotheses}
Let $\ovx\in\bnd$ and $0<\eta,\lambda<1\le\theta$ be given.
\begin{enumerate}[(a)]
\item\label{L:Open} For each $\delta>0$, the set $Q^\theta_{\lambda,\delta}(\ovx)$ is open.
\item\label{L:Corkscrew} Clearly $Q^{\theta}_{\lambda}(\ovx)\subseteq Q^{\theta'}_{\lambda'}(\ovx)$, for any $0<\lambda'\le\lambda<1\le\theta\le\theta'<\infty$. Thus, if $Q^\theta_{\lambda,\delta}(\ovx)$ is an $(\eta,\theta)$-corkscrew region, then $Q^{\theta'}_{\lambda',\delta}(\ovx)$ is an $(\eta',\theta')$-corkscrew region, for each $0<\eta'\le\eta$.
\item Let $0<\rho,\rho'<1$ be given. If $\ovx'\in\bnd$ satisfies $\|\ovx-\ovx'\|_{\ren}<\rho'$ and $\theta'\ge\theta$, then $Q^\theta_{\lambda,\rho}\bs\bll_{\eta\rho}(\ovx)\subseteq Q^{\theta'}_{\lambda'}(\ovx')$, with $\lambda':=\lambda\eta\rho/(\rho+\rho')$.
\item\label{L:ThinCorkscrew} Let $\delta_0>0$ and $C\ge1$ be given. Suppose that $Q^\theta_{\lambda}(\ovx)$ is both $(C,\theta)$-connected to $\ovx$ and an $(\eta,\theta)$-corkscrew region radius $\delta_0$. Then $Q^\theta_\lambda(\ovx)$ is an $(\eta',\theta)$-corkscrew region with radius $\eta\delta_0$, for every $0<\eta'<1$.
\item\label{L:CorkscrewVol} Assume (H1), and put $R_0:=\sup_{\vx\in Q^{\theta_\bnd(\ovx)}_{\lambda_0,\delta}(\ovx)}d_{\partial\dom}(\vx)$.
Then, $R_0\ge(\eta_0\lambda_0\delta)^{\theta_\bnd(\ovx)}$, and there exists a dimensional constant $c>0$ such that, for each $0<\lambda<\lambda_0$,
\[
    |Q^{\theta_\bnd(\ovx)}_{\lambda,\delta}(\ovx)|
    \ge
    c(1-\lambda/\lambda_0)^{n\theta_\bnd(\ovx)}R_0^n
    \nd
    |\dom_\rho(\ovx)|\ge cR_0^n.
\]
\end{enumerate}
\end{lemma}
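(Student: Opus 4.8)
The plan is to treat each of the six items essentially independently, since all are "straightforward consequences of the definitions," but the final two items (\ref{L:ThinCorkscrew}) and (\ref{L:CorkscrewVol}) carry the real content. For (\ref{L:Open}), I would note that $\vx\mapsto d_{\partial\dom}(\vx)$ is continuous (it is $1$-Lipschitz) and $\vx\mapsto\|\ovx-\vx\|_{\ren}$ is continuous, so $Q^\theta_\lambda(\ovx)=\{\vx\in\dom:d_{\partial\dom}(\vx)-(\lambda\|\ovx-\vx\|_{\ren})^\theta>0\}$ is the preimage of $(0,\infty)$ under a continuous function, hence relatively open in $\dom$; intersecting with the open ball $\bll_\delta(\ovx)$ and recalling $\dom$ is open gives the claim. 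For (\ref{L:Corkscrew}), the inclusion $Q^\theta_\lambda(\ovx)\subseteq Q^{\theta'}_{\lambda'}(\ovx)$ for $\lambda'\le\lambda$, $\theta\le\theta'$ follows pointwise from monotonicity: if $d_{\partial\dom}(\vx)>(\lambda\|\ovx-\vx\|)^\theta$ then since (after shrinking the ambient scale so that the relevant quantities are $\le1$, or arguing by cases on whether $\lambda\|\ovx-\vx\|\ge1$) one has $(\lambda'\|\ovx-\vx\|)^{\theta'}\le(\lambda\|\ovx-\vx\|)^{\theta}$; the corkscrew-region transfer is then immediate because the same witness point $\vx$ works and $d_{\partial\dom}(\vx)>(\eta\delta)^\theta\ge(\eta'\delta)^\theta$. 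Item (c) is a direct chase: for $\vx\in Q^\theta_{\lambda,\rho}\bs\bll_{\eta\rho}(\ovx)$ one has $\|\ovx'-\vx\|_{\ren}\le\|\ovx-\vx\|_{\ren}+\rho'<\rho+\rho'$ and $\|\ovx-\vx\|_{\ren}\ge\eta\rho$, so $\lambda\|\ovx-\vx\|_{\ren}\ge\lambda\eta\rho\ge\lambda\eta\rho\,\|\ovx'-\vx\|_{\ren}/(\rho+\rho')=\lambda'\|\ovx'-\vx\|_{\ren}$, whence $d_{\partial\dom}(\vx)>(\lambda\|\ovx-\vx\|_{\ren})^\theta\ge(\lambda'\|\ovx'-\vx\|_{\ren})^{\theta'}$ after again disposing of the $\ge1$ case.

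For (\ref{L:ThinCorkscrew}) — which I expect to be the main obstacle — the point is to upgrade the corkscrew property so it holds with \emph{arbitrarily small} constant $\eta'$ at the cost of shrinking the radius from $\delta_0$ to $\eta\delta_0$. Fix $0<\eta'<1$ and $0<\delta\le\eta\delta_0$. Apply the $(\eta,\theta)$-corkscrew property at scale $\delta/\eta\le\delta_0$ to get $\vx\in Q^\theta_\lambda(\ovx)$ with $\eta\cdot(\delta/\eta)<\|\ovx-\vx\|_{\ren}<\delta/\eta$, i.e. $\delta<\|\ovx-\vx\|_{\ren}<\delta/\eta$, and $d_{\partial\dom}(\vx)>(\eta\cdot\delta/\eta)^\theta=\delta^\theta$. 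Now I want a witness point for scale $\delta$, i.e. a point $\vx''$ with $\eta'\delta<\|\ovx-\vx''\|_{\ren}<\delta$ and $d_{\partial\dom}(\vx'')>(\eta'\delta)^\theta$. The idea is to move from $\vx$ (which is too far, at distance between $\delta$ and $\delta/\eta$) inward toward $\ovx$: take a second corkscrew point $\vx'$ at a much smaller scale, lying well inside $\bll_\delta(\ovx)$ but still with $d_{\partial\dom}(\vx')\ge(\lambda'\text{-type quantity})^\theta$, then use the $(C,\theta)$-connectedness of $Q^\theta_\lambda(\ovx)$ to join $\vx$ and $\vx'$ by a rectifiable path $\ve\gamma$ staying in $Q^\theta_\lambda(\ovx)$ with $\haus^1(\ve\gamma)\le C\cdot(\delta/\eta)$ and $d_{\partial\dom}(\ve\gamma(\tau))\ge\vep\,(\delta/\eta)^\theta$ for a uniform $\vep$; along this path the function $\tau\mapsto\|\ovx-\ve\gamma(\tau)\|_{\ren}$ is continuous and takes a value $>\delta$ at one endpoint and a value $<\eta'\delta$ (after choosing the small scale for $\vx'$ appropriately small) at the other, so by the intermediate value theorem there is $\tau_*$ with $\|\ovx-\ve\gamma(\tau_*)\|_{\ren}$ exactly in $(\eta'\delta,\delta)$; set $\vx'':=\ve\gamma(\tau_*)$, which lies in $Q^\theta_\lambda(\ovx)$ and has $d_{\partial\dom}(\vx'')\ge\vep(\delta/\eta)^\theta\ge(\eta'\delta)^\theta$ provided $\eta'$ is small enough relative to $\vep,\eta,\theta$ — and if the a priori fixed $\eta'$ is not that small, one simply shrinks it, which only makes the conclusion easier. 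The care needed is in checking that all the scales invoked ($\delta/\eta$, the small scale for $\vx'$, $\rho_0$ from connectedness, $\delta_0$ from corkscrew) are simultaneously admissible for all $0<\delta\le\eta\delta_0$, but this is bookkeeping once the geometric picture is fixed.

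For (\ref{L:CorkscrewVol}), assuming (H1): the bound $R_0\ge(\eta_0\lambda_0\delta)^{\theta_\bnd(\ovx)}$ comes directly from the corkscrew property applied to $Q_{\lambda_0}(\ovx)$ at the scale $\delta$ (or any scale $\le\delta_\bnd$), which yields a point $\vx\in Q^{\theta_\bnd(\ovx)}_{\lambda_0,\delta}(\ovx)$ with $d_{\partial\dom}(\vx)>(\eta_0\delta)^{\theta_\bnd(\ovx)}$ — actually one should double-check the exact exponent against the definition, using $\|\ovx-\vx\|_{\ren}<\delta$ and $d_{\partial\dom}(\vx)>(\eta_0\delta)^{\theta_\bnd(\ovx)}$, and then possibly the factor $\lambda_0$ enters because the point must also satisfy $d_{\partial\dom}(\vx)>(\lambda_0\|\ovx-\vx\|_{\ren})^{\theta_\bnd(\ovx)}$ to lie in $Q_{\lambda_0}$; tracing this gives the stated $(\eta_0\lambda_0\delta)^{\theta_\bnd(\ovx)}$ lower bound on $R_0$. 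For the volume bounds, pick $\vx_0$ realizing (up to a factor $2$) the supremum $R_0$, so $d_{\partial\dom}(\vx_0)\ge R_0/2$ and $\vx_0\in Q^{\theta_\bnd(\ovx)}_{\lambda_0,\delta}(\ovx)$; then a ball of radius $cR_0$ around $\vx_0$ sits inside $\dom$ (giving $|\dom_\rho(\ovx)|\ge cR_0^n$ once $\rho\ge$ the relevant scale — here one uses $\|\ovx-\vx_0\|_{\ren}<\delta$ and that $R_0\le\delta^{\theta}\le\delta$), and for the sharper $Q_{\lambda,\delta}$ bound one observes that shrinking from $\lambda_0$ to $\lambda<\lambda_0$ only enlarges the set, and the cone-opening at $\vx_0$ of half-angle comparable to $(1-\lambda/\lambda_0)$ forces a region of measure $\gtrsim(1-\lambda/\lambda_0)^{n\theta_\bnd(\ovx)}R_0^n$ to lie in $Q^{\theta_\bnd(\ovx)}_{\lambda,\delta}(\ovx)$ — concretely, all points $\vx$ with $\|\vx-\vx_0\|_{\ren}$ a small multiple of $(1-\lambda/\lambda_0)R_0$ still satisfy both $d_{\partial\dom}(\vx)>(\lambda\|\ovx-\vx\|_{\ren})^{\theta_\bnd(\ovx)}$ (since $d_{\partial\dom}$ doesn't drop much and the defining inequality had slack) and $\|\ovx-\vx\|_{\ren}<\delta$. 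The anticipated friction here is entirely in matching exponents of $\theta_\bnd(\ovx)$ correctly and keeping the dimensional constant $c$ genuinely independent of $\lambda,\theta_\bnd$, as the paper's constant convention demands.
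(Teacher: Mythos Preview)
Your plan matches the paper's approach almost exactly: the paper dismisses (a)--(c) as ``direct consequences of the definitions,'' handles (d) by picking corkscrew points $\vx_j\in Q^\theta_{\lambda,\eta^j\delta_0}\setminus\bll_{\eta^{j+1}\delta_0}(\ovx)$ for each $j$ and invoking connectedness to join consecutive ones (your two-point-plus-IVT variant is equivalent), and handles (e) by taking any $\vx\in Q_{\lambda_0,\delta}(\ovx)$, setting $r=d_{\partial\dom}(\vx)$, and showing that the sub-ball $\bll_{\sigma r}(\vx)\cap\dom_\delta(\ovx)$ lies in $Q_{\lambda,\delta}(\ovx)$ for $\sigma=1-(\lambda/\lambda_0)^{\theta_\bnd(\ovx)}$, then taking the supremum over such $\vx$.

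One point in your outline for (d) is backwards and worth correcting. You write that if the given $\eta'$ is not small enough for the inequality $\vep(\delta/\eta)^\theta\ge(\eta'\delta)^\theta$ to hold, ``one simply shrinks it, which only makes the conclusion easier.'' But monotonicity goes the other way: by part (b), an $(\eta,\theta)$-corkscrew region is automatically $(\eta',\theta)$-corkscrew for every $\eta'\le\eta$, so the \emph{small}-$\eta'$ case is already free. The content of (d) is precisely the \emph{large}-$\eta'$ case ($\eta<\eta'<1$), and proving it for a smaller parameter does not imply it for a larger one. The IVT argument still works, though: along the connecting path you can pick the witness $\vx''$ at distance arbitrarily close to (but below) $\delta$, so the annulus condition $\eta'\delta<\|\ovx-\vx''\|<\delta$ is satisfied for every $\eta'<1$; the required bound $d_{\partial\dom}(\vx'')>(\eta'\delta)^\theta$ then comes from the path's uniform lower bound $d_{\partial\dom}(\ve\gamma(\tau))\ge\vep_{\lambda^*}\rho^\theta$ at the appropriate scale. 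The paper's proof is equally terse on this point (``this yields the claim''), so your plan is not worse---just fix the direction of the monotonicity remark.
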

\begin{proof}
Parts (a)--(c) are direct consequences of the definitions.

For part (d), by the definition of an $(\eta,\theta)$-corkscrew region, for each $j\in\whls$, there exists $\vx_j\in Q^\theta_{\lambda,\eta^j\delta_0}\bs\bll_{\eta^{j+1}\delta_0}(\ovx)$. The connectedness assumption implies there is a path of points in $Q^\theta_\lambda(\ovx)$ joining $\vx_j$ and $\vx_{j+1}$, and this yields the claim.

For part (e), assumption (H1) implies there exists an $\vx\in Q^{\theta_\bnd(\ovx)}_{\lambda_0,\delta}(\ovx)$ and the lower bound for $R_0$. Put $\rho:=\|\ovx-\vx\|_{\ren}$ and $r:=d_{\partial\dom}(\ovx)$, so $r\ge(\lambda_0\rho)^{\theta_\bnd(\ovx)}$. Since $\bll_r(\vx)\subseteq\dom$ and $\delta\ge\rho\ge r$, we find that $\bll_r(\vx)\cap\dom_{\delta}(\ovx)$ contains a set congruent to $\bll_{\rho}(\ve{0})\cap\bll_{r}(\ve{e}\rho)$, with $\ve{e}\in\ren$ a unit vector. Thus, there is dimensional constant $c>0$ such that
\[
    |\bll_{\sigma r}(\vx)\cap\dom_{\delta}(\ovx)|\ge c\sigma^nr^n,
    \frl 0<\sigma\le 1.
\]
For each $0<\sigma<1$ and $\vy\in\bll_{\sigma r}(\vx)\cap\dom_{\delta}(\ovx)$, we have
\[
    d_{\partial\dom}(\vy)
    \ge
    d_{\partial\dom}(\vx)-\sigma r
    =(1-\sigma)r
    \nd
    \|\ovx-\vy\|_{\ren}\le\delta.
\]
We may select $0<\sigma<1$ so that $\lambda=(1-\sigma)^{1/\theta_\bnd(\ovx)}\lambda_0$. It follows that $\bll_{\sigma r}(\vx)\cap\dom_{\delta}(\ovx)\subseteq Q^{\theta_\bnd(\ovx)}_{\lambda,\delta}(\ovx)$. Hence
\[
    |Q^{\theta_\bnd(\ovx)}_{\lambda,\delta}(\ovx)|\ge c\sigma^nr^n
    \ge c\lp(\lambda_0-\lambda)\rho\rp^{n\theta_\bnd(\ovx)}.
\]
Since $\vx\in Q^{\theta_\bnd(\ovx)}_{\lambda_0,\delta}(\ovx)$ was arbitrary, we deduce that
\[
    |Q^{\theta_\bnd(\ovx)}_{\lambda,\delta}(\ovx)|\ge c\sigma^nR_0^n
    =c\lp1-\lambda/\lambda_0\rp^{n\theta_\bnd(\ovx)}R_0^n.
\]
This also yields $|\dom_\rho(\ovx)|=\lim_{\lambda\to0^+}|Q^{\theta_\bnd(\ovx)}_{\lambda,\delta}|
    \ge cR_0^n$.
\end{proof}

\begin{lemma}\label{L:guMeansDiff}
Assume (H2). Let $\ovx\in\bnd$ be given. Then for each $0<\rho\le\rho_\bnd$, $0<\eta\le\eta_0$, $0<\lambda\le\lambda_0$ and $\vx,\vx'\in Q_{\lambda,\rho}\bs\bll_{\eta\rho}(\ovx)$, we have
\[
    |g(\vx;u)-g(\vx';u)|^p
    \le
    c\vep_{\lambda'}^{-n-p}\delta^{\alpha_\delta(\ovx)}
        \nu^{s(\cdot),p}\lp\bll_\delta(\ovx)\rp
\]
Here $\lambda':=\eta\lambda$ and $\delta:=3C_\bnd\rho$.
\end{lemma}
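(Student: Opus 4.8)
The plan is to exploit assumption (H2) to connect $\vx$ and $\vx'$ by a rectifiable path that stays uniformly far from $\partial\dom$, and then control the variation of $g(\cdot;u)$ along that path by a chaining argument through overlapping balls. First I would set $\delta := 3C_\bnd\rho$ and $\lambda' := \eta\lambda$, observe that $\vx,\vx' \in Q_{\lambda,\rho}\bs\bll_{\eta\rho}(\ovx)$ implies $\|\ovx-\vx\|, \|\ovx-\vx'\| \in [\eta\rho,\rho)$ and $d_{\partial\dom}(\vx), d_{\partial\dom}(\vx') \ge (\lambda\eta\rho)^{\theta_\bnd(\ovx)} = (\lambda'\rho)^{\theta_\bnd(\ovx)}$, which is exactly the hypothesis needed to invoke $(C_\bnd,\theta_\bnd(\ovx))$-connectedness of $\dom_{\delta_\bnd}(\ovx)$ with the choice $\lambda' $ in place of $\lambda$ (here one uses $\rho \le \rho_\bnd = \delta_\bnd/(3C_\bnd) \le \delta_\bnd$). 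This produces a rectifiable path $\ve{\gamma}$ between $\vx$ and $\vx'$ with $\haus^1(\ve{\gamma}) \le C_\bnd\rho$ and $d_{\partial\dom}(\ve{\gamma}(\tau)) \ge \vep_{\lambda'}\rho^{\theta_\bnd(\ovx)}$ for all $\tau$; in particular the whole path lies in $\bll_{(1+C_\bnd)\rho}(\ovx) \subseteq \bll_\delta(\ovx)$.

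Next I would discretize the path. Cover $\ve{\gamma}([0,1])$ by a chain of balls $\Phi(\vy_k) = \bll_{\frac16 d_{\partial\dom}(\vy_k)}(\vy_k)$ centered at points $\vy_k = \ve{\gamma}(\tau_k)$ along the path, chosen so that consecutive balls $\Phi(\vy_k)$ and $\Phi(\vy_{k+1})$ overlap in a set of measure comparable to $|\Phi(\vy_k)|$; since $d_{\partial\dom}$ is bounded below on the path by $\vep_{\lambda'}\rho^{\theta_\bnd(\ovx)}$ and the path has length at most $C_\bnd\rho$, one needs at most $c\,\vep_{\lambda'}^{-1}$-many such balls (up to a dimensional factor). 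For each consecutive pair, a standard mean-value comparison gives
\[
  |g(\vy_k;u) - g(\vy_{k+1};u)|
  \le
  c\fslint{\Phi(\vy_k)\cap\Phi(\vy_{k+1})} |u(\vz) - g(\vy_k;u)| + |u(\vz) - g(\vy_{k+1};u)|\,\dd\vz
  \le
  c\fslint{\Psi(\vy_k)} |u(\vz) - u(\vy_k)|\,\dd\vz + (\text{same at }\vy_{k+1}),
\]
where I absorb $|g(\vy_j;u)-u(\vy_j)|$-type terms using that $\Phi(\vy_j)\subseteq\Psi(\vy_j)$ and that the relevant radii are all comparable along the path (Jensen on the average defining $g$). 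Summing over the chain and applying the triangle inequality yields
\[
  |g(\vx;u) - g(\vx';u)|
  \le
  c\,\vep_{\lambda'}^{-1}\,\max_k \fslint{\Psi(\vy_k)} \frac{|u(\vz) - u(\vy_k)|}{d_{\partial\dom}(\vy_k)^{s(\vy_k)}}\,\dd\vz \cdot d_{\partial\dom}(\vy_k)^{s(\vy_k)}.
\]

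Then I would raise to the $p$-th power, replace the $\max_k$ by a sum, and bound each term. Using $\vy_k \in \bll_\delta(\ovx)\cap\dom$ with $d_{\partial\dom}(\vy_k)$ between $\vep_{\lambda'}\rho^{\theta_\bnd(\ovx)}$ and $\delta$, the factor $d_{\partial\dom}(\vy_k)^{ps(\vy_k)}$ is controlled by $\delta^{p\,\ul{s}_\delta(\ovx)}$ (or by $\vep_{\lambda'}^{p\ul s_\delta(\ovx)}\rho^{\dots}$, choosing whichever sign of the exponent makes the bound hold) — this is precisely where the exponent $\alpha_\delta(\ovx) = \alpha(\ul s_\delta(\ovx),\theta_\bnd(\ovx))$ and its two-case (piecewise bilinear) definition come from: one case when $p\,\ul s_\delta - n \le p-1$ and the other when $p\,\ul s_\delta - n > p-1$, matching the split in the definition of $\alpha$. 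Each averaged integral $\bigl(\fslint{\Psi(\vy_k)} |u(\vz)-u(\vy_k)|/d_{\partial\dom}(\vy_k)^{s(\vy_k)}\,\dd\vz\bigr)^p$ is, by definition of $\nu^{s(\cdot),p}$ and the bounded-overlap property (Lemma 2.5) of the balls $\{\Phi(\vy_k)\}$ together with the comparability $|\Psi(\vy_k)| \ge c\,\delta^n$ times a power of $\vep_{\lambda'}$, bounded by $c\,\vep_{\lambda'}^{-n}\delta^{-\text{(power)}}\nu^{s(\cdot),p}(\bll_\delta(\ovx))$ after integrating in $\vz$ over the union of the balls inside $\dom\cap\bll_\delta(\ovx)$. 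Collecting the $\vep_{\lambda'}$-powers (one factor $\vep_{\lambda'}^{-p}$ from the chaining count raised to the $p$, and $\vep_{\lambda'}^{-n}$ from converting averages to the integral over the whole region) gives the stated $\vep_{\lambda'}^{-n-p}$.

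The main obstacle will be the bookkeeping in the third step: carefully verifying that the balls $\Phi(\vy_k)$ along the path can be chosen with uniformly bounded overlap AND consecutive overlap of full measure, while keeping the total count at $c\,\vep_{\lambda'}^{-1}$, and then tracking the exact powers of $\delta$ and $\vep_{\lambda'}$ so that they assemble into $\vep_{\lambda'}^{-n-p}\delta^{\alpha_\delta(\ovx)}$ with the correct two-case exponent. The two cases in $\alpha$ are unavoidable because the relation between $|\Psi(\vx)| \sim d_{\partial\dom}(\vx)^n$ and the denominator $d_{\partial\dom}(\vx)^{ps(\vx)}$ changes which of $d_{\partial\dom}(\vx)$ or $\rho$ (equivalently $\delta$) one should bound against — this is the familiar phenomenon where a Jensen/Hölder step on an average of radius $\sim d_{\partial\dom}(\vx)$ costs a factor $d_{\partial\dom}(\vx)^{-n/p'}$ or $d_{\partial\dom}(\vx)^{-n}$ depending on whether one keeps the $L^1$ average or passes to $L^p$. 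Everything else (invoking (H2), the triangle inequality along the chain, monotonicity of $\nu^{s(\cdot),p}$, and Lemma 2.5) is routine.
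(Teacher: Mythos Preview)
Your overall strategy --- invoke (H2), chain along the path, split into two cases for the exponent --- matches the paper's, but there are two genuine gaps.

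First, the ball count is off. The path has length $\le C_\bnd\rho$ and stays at distance $\ge\vep_{\lambda'}\rho^{\theta}$ from $\partial\dom$ (with $\theta=\theta_\bnd(\ovx)$), so balls of radius comparable to $d_{\partial\dom}$ along it have radius $\sim\vep_{\lambda'}\rho^{\theta}$ and you need $N\lesssim\vep_{\lambda'}^{-1}\rho^{1-\theta}$ of them, not $\vep_{\lambda'}^{-1}$. When $\theta>1$ and $\rho<1$ this is strictly larger, and the extra factor $\rho^{1-\theta}$ is precisely where the $(1-\theta)$ terms in the definition of $\alpha$ come from; without it you cannot recover the stated exponent.

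Second, and more seriously, your chain estimate ends with the \emph{pointwise} quantity $\bigl(\fslint{\Psi(\vy_k)}|u(\vz)-u(\vy_k)|\,d_{\partial\dom}(\vy_k)^{-s(\vy_k)}\dd\vz\bigr)^p$ at specific chain points $\vy_k$. This is the integrand of $\nu^{s(\cdot),p}$ evaluated at $\vy_k$, and there is no reason it is controlled by $\nu^{s(\cdot),p}(\bll_\delta(\ovx))$: $u$ is only in $L^1$, so $u(\vy_k)$ need not even be defined, and an $L^1$ integrand can blow up on a null set. ``Bounded overlap'' and ``integrating in $\vz$'' do not help, since $\vz$ is already integrated out and $\vy_k$ is fixed. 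The paper avoids any pointwise evaluation by exploiting the double-average structure of $g$: it writes
\[
    g(\vx_j)-g(\vx_{j-1})
    =\fslint{\Phi(\vx_{j-1})}\fslint{\Phi(\vx_j)}\bigl[u(\vy_j)-u(\vy_{j-1})\bigr]\dd\vy_j\dd\vy_{j-1},
\]
checks the geometric inclusion $\Phi(\vx_j)\subseteq\Psi(\vy_{j-1})$ for \emph{every} $\vy_{j-1}\in\Phi(\vx_{j-1})$ (this is why the step size is $\tfrac{1}{12}d_{\partial\dom}(\vx_{j-1})$ and the radii $\tfrac16$, $\tfrac12$ are what they are), enlarges the inner average to $\Psi(\vy_{j-1})$, and then applies Jensen in the \emph{outer} variable $\vy_{j-1}$. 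This produces an honest integral $\int_{\Phi(\vx_{j-1})}\bigl(\fslint{\Psi(\vy)}|u(\vy')-u(\vy)|\dd\vy'\bigr)^p\dd\vy$, a genuine piece of $\nu^{s(\cdot),p}$. Moreover, the paper does not use equal weights when raising the telescoping sum to the $p$th power; it uses weights $\zeta_j\propto d_{\partial\dom}(\vx_{j-1})$, so each term carries $\zeta_j^{1-p}\sim\rho^{p-1}d_{\partial\dom}(\vx_{j-1})^{1-p}$. Combined with $|\Phi(\vx_{j-1})|^{-1}\sim d_{\partial\dom}(\vx_{j-1})^{-n}$ and $d_{\partial\dom}(\vy)^{p\ul{s}_\delta}$, this yields a factor $d_{\partial\dom}(\vx_{j-1})^{p\ul{s}_\delta-n-p+1}$, and the split in $\alpha$ at $p\ul{s}_\delta-n=p-1$ is exactly the sign of this exponent (use the lower bound $d_{\partial\dom}\ge\vep_{\lambda'}\rho^\theta$ when nonpositive, the upper bound $d_{\partial\dom}\lesssim\delta$ when positive). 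With equal weights you would get the worse exponent $p(1-\theta)+(p\ul{s}_\delta-n)$ in the second case, not $(1-\theta)+(p\ul{s}_\delta-n)$.
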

\begin{proof}
Put $\theta:=\theta_\bnd(\ovx)$ and $r:=\rho^{\theta}$, so $r\le\rho$. We may assume $\vx\neq\vx'$. From the definition of $Q_{\lambda,\rho}(\ovx)$, we have
\[
    \rho>\|\ovx-\vx\|_{\ren},\|\ovx-\vx'\|_{\ren}\ge\eta\rho,
    \qqquad
    \rho>d_{\partial\dom}(\vx),d_{\partial\dom}(\vx')>(\eta\lambda)^{\theta}r.
\]
By hypothesis (H2), there exists a path $\ve{\gamma}:[0,1]\to\dom_{\delta_\bnd}(\ovx)$, between $\vx$ and $\vx'$, satisfying~\eqref{H:PathProp}. We assume that $\gamma$ is injective. Put $\tau_0:=0$ and $\vx_0:=\vx$. If $\haus^1(\ve{\gamma}([0,1]))>\frac{1}{12}d_{\partial\dom}(\vx_0)$, then we select $\tau_1\in(0,1)$ so that $\haus^1(\ve{\gamma}([\tau_0,\tau_1]))=\frac{1}{12}d_{\partial\dom}(\vx_0)$ and define $\vx_1:=\ve{\gamma}(\tau_1)$. We continue iteratively: if $\haus^1(\ve{\gamma}([\tau_j,1]))>\frac{1}{12}d_{\partial\dom}(\vx_j)$, then we we select $\tau_{j+1}\in(\tau_j,1)$ so that $\haus^1(\ve{\gamma}([\tau_j,\tau_{j+1}]))=\frac{1}{12}d_{\partial\dom}(\vx_j)$ and define $\vx_{j+1}:=\ve{\gamma}(\tau_{j+1})$. Since $d_{\partial\dom}(\ve{\gamma}(\tau))\ge\vep_{\lambda'}r$, for all $\tau\in[0,1]$, there exists an $N\in\nats$ such that $\haus^1(\ve{\gamma}([\tau_{N-1},1]))<\frac{1}{12}d_{\partial\dom}(\vx_{N-1})$. Indeed, since $\haus^1(\ve{\gamma}([0,1]))\le C_\bnd\rho$, we have the bound
\begin{equation}\label{E:NBnd}
    N\le 1+\frac{C_\bnd\rho}{\frac{1}{12}\vep_{\lambda'}r}
    \le c\lp\frac{C_\bnd}{\vep_{\lambda'}}\rp\rho^{1-\theta}.
\end{equation}
Put $\vx_N:=\vx'$. To summarize, the finite sequence $\{\vx_j\}_{j=0}^N\subset\ve{\gamma}([0,1])$ has the following properties:
\begin{itemize}
    \item $\vx_0=\vx$ and $\vx_N=\vx'$;
    \item $\|\vx_j-\vx_{j-1}\|_{\ren}\le\frac{1}{12}d_{\partial\dom}(\vx_{j-1})$, for each $j=1,\dots,N$;
    \item $\vep_{\lambda'}r\le d_{\partial\dom}(\vx_j)\le\|\ovx-\vx_j\|_{\ren}<\delta_\bnd$, for each $j=0,1,\dots,N$.
\end{itemize}
Before continuing, we note that
\begin{equation}\label{E:xjDistBnd}
\nonumber
    d_{\partial\dom}(\vx_j)
    \le
    \min\lc d_{\partial\dom}(\vx_0)+\|\vx_0-\vx_j\|_{\ren},d_{\partial\dom}(\vx_N)+\|\vx_N-\vx_j\|_{\ren}\rc
    \le
    (1+\smfrac{1}{2}C_\bnd)\rho\le\smfrac{1}{2}\delta.
\end{equation}
Put $L:=\frac{1}{12}\sum_{j=0}^{N-1}d_{\partial\dom}(\vx_j)$. Then
\begin{equation}\label{E:LBnd}
    \|\vx-\vx'\|_{\ren}\le\haus^1(\ve{\gamma}([0,1]))\\
    \le L\le\haus^1(\ve{\gamma}([0,1]))+\smfrac{1}{12}d_{\partial\dom}(\vx_{N-1})
    \le \lp\smfrac{1}{12}+\smfrac{25}{24}C_\bnd\rp\rho\le\smfrac{9}{8}C_\bnd\rho.
\end{equation}
For each $j=1,\dots,N$, define $\zeta_j:=\lp\frac{1}{12L}\rp d_{\partial\dom}(\vx_{j-1})$, so $\sum_{j=1}^N\zeta_j=1$.

We proceed now to the main part of the proof. The convexity of $x\to|x|^p$ allows us to write
\begin{equation}\label{E:gDiffIneq1}
    |g(\vx)-g(\vx')|^p
    \le
    \sum_{j=1}^N\zeta_j^{1-p}
        \underbrace{\left|\fslint{\Phi'(\vx_j)}u(\vy_j)\dd\vy_j
            -\fslint{\Phi'(\vx_{j-1})}u(\vy_{j-1})\dd\vy_{j-1}\right|^p}_{=:I_j}.
\end{equation}
We observe that, for each $j=0,1,\dots,N$ and $\vy_j\in\Phi(\vx_j)$,
\begin{equation}\label{E:yjDistBnd}
    d_{\partial\dom}(\vy_j)<\lp1+\smfrac{1}{6}\rp d_{\partial\dom}(\vx_j)
    \le\smfrac{7}{12}\delta<\delta_\bnd.
\end{equation}
With $j=1,2,\dots,N$, let $\vy_{j-1}\in\Phi(\vx_{j-1})$ be given. Then
\begin{align*}
    & \|\vy_{j-1}-\vx_{j-1}\|_{\ren}<\smfrac{1}{6}d_{\partial\dom}(\vx_{j-1})\\
    \Longrightarrow &
    d_{\partial\dom}(\vy_{j-1})
    \ge
    d_{\partial\dom}(\vx_{j-1})-\|\vy_{j-1}-\vx_{j-1}\|_{\ren}>\lp1-\smfrac{1}{12}\rp d_{\partial\dom}(\vx_{j-1})\\
    \Longrightarrow &
    d_{\partial\dom}(\vx_{j-1})<\smfrac{12}{11}d_{\partial\dom}(\vy_{j-1}).
\end{align*}
Thus, for each $\vy_{j-1}\in\Phi(\vx_{j-1})$, we have
\begin{align}
\nonumber
    \|\vy_j-\vy_{j-1}\|_{\ren}
    &\le
    \|\vy_j-\vx_j\|_{\ren}+\|\vx_j-\vx_{j-1}\|_{\ren}+\|\vx_{j-1}-\vy_{j-1}\|_{\ren}\\
\nonumber
    &<
    \smfrac{1}{6} d_{\partial\dom}(\vx_j)+\smfrac{1}{12}d_{\partial\dom}(\vx_{j-1})
        +\smfrac{1}{12}d_{\partial\dom}(\vx_{j-1})\\
\nonumber
    &\le
    \smfrac{1}{6}\lp d_{\partial\dom}(\vx_{j-1})+\|\vx_j-\vx_{j-1}\|_{\ren}\rp
        +\smfrac{1}{6}d_{\partial\dom}(\vx_{j-1})\\
\nonumber
    &\le
    \lp\smfrac{1}{6}+\smfrac{1}{72}+\smfrac{1}{6}\rp d_{\partial\dom}(\vx_{j-1})
    =\smfrac{25}{72}d_{\partial\dom}(\vx_{j-1})\\
\label{E:Phi'InPsi}
    &<\smfrac{1}{2}d_{\partial\dom}(\vy_{j-1}).
\end{align}
Consequently, $\Phi(\vx_j)\subseteq\Psi(\vy_{j-1})\subseteq\dom$, for all $\vy_{j-1}\in\Phi(\vx_{j-1})$.
Additionally,
\begin{align*}
    &d_{\partial\dom}(\vx_{j-1})\le d_{\partial\dom}(\vx_j)+\|\vx_j-\vx_{j-1}\|_{\ren}
    \le d_{\partial\dom}(\vx_j)+\smfrac{1}{12}d_{\partial\dom}(\vx_{j-1})\\
    \Longrightarrow&
    d_{\partial\dom}(\vx_{j-1})\le\smfrac{12}{11}d_{\partial\dom}(\vx_j).
\end{align*}
It follows that
\[
    d_{\partial\dom}(\vy_{j-1})
    <
    \lp1+\smfrac{1}{6}\rp d_{\partial\dom}(\vx_{j-1})
    \le\smfrac{14}{11}d_{\partial\dom}(\vx_j)
    \Longrightarrow
    \frac{|\Psi(\vy_{j-1})|}{|\Phi(\vx_j)|}
    \le
    4^n.
\]
Using this,~\eqref{E:Phi'InPsi} and Jensen's inequality yields
\begin{align*}
    I_j
    =&
    \left|\fslint{\Phi(\vx_{j-1})}\flint{\Phi'(\vx_j)}[u(\vy_j)
        -u(\vy_{j-1})]\dd\vy_j\dd\vy_{j-1}\right|^p\\
    \le&
    \fslint{\Phi(\vx_{j-1})}\lp\frac{|\Psi(\vy_{j-1})|}{|\Phi'(\vx_j)|}\rp^p
        \lp\fslint{\Psi(\vy_{j-1})}|u(\vy_j)-u(\vy_{j-1})|\dd\vy_j\rp^p\dd\vy_{j-1}\\
    \le&
    c
    \fslint{\Phi(\vx_{j-1})}
        \lp\fslint{\Psi(\vy_{j-1})}|u(\vy_j)-u(\vy_{j-1})|\dd\vy_j\rp^p\dd\vy_{j-1}.
\end{align*}
Returning to~\eqref{E:gDiffIneq1}, after reindexing, we have established
\begin{align}
\nonumber
    |g(\vx)-g(\vx')|^p
    \le&
    c\sum_{j=0}^{N-1}\zeta_{j+1}^{1-p}
        \lp\frac{1}{d_{\partial\dom}(\vx_j)}\rp^n\lint{\Phi(\vx_j)}
        \lp\flint{\Psi(\vy)}|u(\vy')-u(\vy)|\dd\vy'\rp^{\!p}
        \ns\dd\vy\\
\nonumber
    \le&
    cL^{p-1}\sum_{j=0}^{N-1}
        \slint{\Phi(\vx_j)}
            \frac{\lp d_{\partial\dom}(\vy)\rp^{ps(\vy)}}{d_{\partial\dom}(\vx_j)^{n+p-1}}
        \lp\fslint{\Psi(\vy)}\frac{|u(\vy')-u(\vy)|}{d_{\partial\dom}(\vy)^{s(\vy)}}\dd\vy'\rp^{\!p}
        \ns\dd\vy\\
\label{E:gDiffIneq2}
    \le&
    c\rho^{p-1}\sum_{j=0}^{N-1}
        \slint{\Phi(\vx_j)}\ns\lp d_{\partial\dom}(\vx_j)\rp^{ps(\vy)-n-p+1}\!
        \lp\fslint{\Psi(\vy)}\ns\frac{|u(\vy')-u(\vy)|}{d_{\partial\dom}(\vy)^{s(\vy)}}\dd\vy'\!\rp^{\!p}
        \ns\dd\vy.
\end{align}
The last line is a consequence of $d_{\partial\dom}(\vy)<\frac{7}{6}d_{\partial\dom}(\vx_j)\le\frac{7}{12}\delta<1$ and the bounds in~\eqref{E:LBnd}. Next, we recall~\eqref{E:xjDistBnd} to see that, for any $\vy\in\Phi(\vx_j)$,
\begin{align*}
    \|\ovx-\vy\|_{\ren}
    \le&
    \min\lc\|\ovx-\vx\|_{\ren}\!+\!\|\vx-\vx_j\|_{\ren},\|\ovx-\vx'\|_{\ren}\!+\!\|\vx'-\vx_j\|_{\ren}\rc
        +\|\vx_j-\vy\|_{\ren}\\
    <&
    \rho+\smfrac{1}{2}C_\bnd\rho+\smfrac{1}{6}d_{\partial\dom}(\vx_j)
    <3C_\bnd\rho=\delta.
\end{align*}
Thus $\Phi(\vx_j)\subseteq\bll_\delta(\ovx)$ and $s(\vy)\ge\ul{s}_\delta(\ovx)$, for all $\vy\in\Phi(\vx_j)$. If $p\ul{s}_\delta(\ovx)\le n+p-1$, then we may incorporate the bounds $d_{\partial\dom}(\vx_j)\ge\vep_{\lambda'}\rho^\theta$ and~\eqref{E:NBnd} into~\eqref{E:gDiffIneq2} to write
\begin{align*}
    |g(\vx)-g(\vx')|^p
    \le&
    c\vep_{\lambda'}^{p\ul{s}_\delta(\ovx)-n-p+1}\rho^{p-1}\delta^{\lp p\ul{s}_\delta(\ovx)-n-p+1\rp\theta}\sum_{j=0}^{N-1}
        \nu^{s(\cdot),p}\lp\Phi'(\vx_j)\rp\\
    \le&
    c\vep_{\lambda'}^{-n-p+1}N\delta^{(p-1)(1-\theta)}\delta^{\lp p\ul{s}_\delta(\ovx)-n\rp\theta}
        \nu^{s(\cdot),p}\lp\bll_\delta(\ovx)\rp\\
    \le&
    c\vep_{\lambda'}^{-n-p}\delta^{p(1-\theta)}\delta^{\lp p\ul{s}_\delta(\ovx)-n\rp\theta}
        \nu^{s(\cdot),p}\lp\bll_\delta(\ovx)\rp.
\end{align*}
On the other hand, if $p\ul{s}_\delta(\ovx)> n+p-1$, then we may use~\eqref{E:xjDistBnd} to similarly obtain
\[
    |g(\vx)-g(\vx')|^p
    \le
    cN\delta^{p\ul{s}(\ovx)-n}
        \nu^{s(\cdot),p}\lp\bll_\delta(\ovx)\rp
    \le
    c\vep_{\lambda'}^{-1}\delta^{1-\theta}\delta^{p\ul{s}(\ovx)-n}
        \nu^{s(\cdot),p}\lp\bll_\delta(\ovx)\rp.
\]
\end{proof}

\begin{lemma}\label{L:gx'gxDiffBnd}
Assume (H1) and (H2). Let $\ovx\in\bnd$, $0<\eta\le\eta_0$, and $0<\lambda\le\lambda_0$ be given. For each $j\in\whls$, define $\rho_j:=\eta^j\rho_\bnd$, $\delta_j:=\eta^j\delta_\bnd$, and $Q'_j:=Q_{\lambda,\rho_j}\bs\bll_{\rho_{j+1}}(\ovx)$. Then for any $0\le k< k'<\infty$, we have
\[
    |g(\vx')-g(\vx)|\le
    c\vep_{\lambda'}^{-\frac{n+p}{p}}\sum_{j=k}^{k'-1}
        \lp\delta_j^{\ul{\alpha}_{\delta_j}(\ovx)}
            \nu^{s(\cdot),p}\lp\bll_{\delta_j}(\ovx)\rp\rp^\frac{1}{p},
    \text{ for all }\vx\in Q'_k\text{ and }\vx'\in Q'_{k'}.
\]
Here $\lambda'=\eta^2\lambda$.
\end{lemma}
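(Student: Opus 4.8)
The plan is to run a telescoping argument through \cref{L:guMeansDiff}, inserting one corkscrew ``bridge'' point into each annular shell $Q'_j$. Fix $\theta:=\theta_\bnd(\ovx)$. By (H1), $Q_{\lambda_0}(\ovx)$ is an $(\eta_0,\theta)$-corkscrew region for $\ovx$ with radius $\delta_\bnd$, and since $\lambda\le\lambda_0$ we have $Q_{\lambda_0}(\ovx)\subseteq Q_\lambda(\ovx)$ by \cref{L:Hypotheses}. Because $\rho_j=\eta^j\rho_\bnd\le\rho_\bnd\le\delta_\bnd$ for every $j\ge0$, the corkscrew property provides, for each $j$ with $k\le j\le k'-1$, a point
\[
    \vx^{(j)}\in Q_{\lambda_0}(\ovx)\subseteq Q_\lambda(\ovx)
    \quad\text{with}\quad
    \eta_0\rho_j<\|\ovx-\vx^{(j)}\|_{\ren}<\rho_j .
\]
Since $\eta\le\eta_0$, this places $\vx^{(j)}$ in the shell $Q'_j=Q_{\lambda,\rho_j}(\ovx)\bs\bll_{\rho_{j+1}}(\ovx)$, and moreover $\|\ovx-\vx^{(j)}\|_{\ren}>\eta_0\rho_j\ge\eta^2\rho_j$. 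Setting also $\vx^{(k')}:=\vx'$, I record that $\vx\in Q'_k$ gives $\|\ovx-\vx\|_{\ren}\in[\eta\rho_k,\rho_k)$ and $\vx'\in Q'_{k'}$ gives $\|\ovx-\vx'\|_{\ren}\in[\eta^2\rho_{k'-1},\eta\rho_{k'-1})$.

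Next I would check that each consecutive pair in the chain $\vx,\vx^{(k)},\vx^{(k+1)},\dots,\vx^{(k'-1)},\vx^{(k')}=\vx'$ lies in a set of the form $Q_{\lambda,\rho}(\ovx)\bs\bll_{\eta^2\rho}(\ovx)$ demanded by \cref{L:guMeansDiff}. For $k\le j\le k'-1$, both members of $(\vx^{(j)},\vx^{(j+1)})$ lie in $Q_{\lambda,\rho_j}(\ovx)\bs\bll_{\eta^2\rho_j}(\ovx)$: they belong to $Q_\lambda(\ovx)$ (and $\vx'\in Q'_{k'}\subseteq Q_\lambda(\ovx)$); they belong to $\bll_{\rho_j}(\ovx)$ since $\|\ovx-\vx^{(j)}\|_{\ren}<\rho_j$ and $\|\ovx-\vx^{(j+1)}\|_{\ren}<\rho_{j+1}<\rho_j$; and they are at distance $\ge\eta^2\rho_j$ from $\ovx$ because $\|\ovx-\vx^{(j)}\|_{\ren}>\eta_0\rho_j\ge\eta^2\rho_j$, while for $j+1\le k'-1$ one has $\|\ovx-\vx^{(j+1)}\|_{\ren}>\eta_0\rho_{j+1}=\eta_0\eta\rho_j\ge\eta^2\rho_j$ and for $j=k'-1$ one has $\|\ovx-\vx'\|_{\ren}\ge\eta^2\rho_{k'-1}$. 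Likewise $(\vx,\vx^{(k)})$ lies in $Q_{\lambda,\rho_k}(\ovx)\bs\bll_{\eta^2\rho_k}(\ovx)$ since $\|\ovx-\vx\|_{\ren}\ge\eta\rho_k>\eta^2\rho_k$. As $0<\eta^2\le\eta_0$ and $0<\lambda\le\lambda_0$, \cref{L:guMeansDiff} applies to each of these pairs with parameters $(\rho,\eta,\lambda)$ taken to be $(\rho_j,\eta^2,\lambda)$ (respectively $(\rho_k,\eta^2,\lambda)$); it then produces the radius $3C_\bnd\rho_j=\delta_j$ (using $\rho_\bnd=\delta_\bnd/(3C_\bnd)$) and the index $\eta^2\lambda=\lambda'$, giving
\[
    |g(\vx^{(j)};u)-g(\vx^{(j+1)};u)|^p
    \le
    c\,\vep_{\lambda'}^{-n-p}\,\delta_j^{\ul{\alpha}_{\delta_j}(\ovx)}\,\nu^{s(\cdot),p}\lp\bll_{\delta_j}(\ovx)\rp,
    \frl k\le j\le k'-1,
\]
together with the same estimate, with $\delta_k$ on the right, for $|g(\vx;u)-g(\vx^{(k)};u)|^p$.

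Finally I would telescope along the chain via the triangle inequality,
\[
    |g(\vx';u)-g(\vx;u)|
    \le
    |g(\vx;u)-g(\vx^{(k)};u)|+\sum_{j=k}^{k'-1}|g(\vx^{(j)};u)-g(\vx^{(j+1)};u)| ,
\]
take $p$-th roots of the individual bounds just obtained, and observe that the resulting sum is at most $2\sum_{j=k}^{k'-1}\lp c\,\vep_{\lambda'}^{-n-p}\,\delta_j^{\ul{\alpha}_{\delta_j}(\ovx)}\,\nu^{s(\cdot),p}(\bll_{\delta_j}(\ovx))\rp^{1/p}$ (the factor $2$ arising because the $j=k$ summand is used at most twice); absorbing $2c^{1/p}$ into $c$ yields the claim with $\lambda'=\eta^2\lambda$. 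The one delicate point is the geometric bookkeeping in the middle step: the bridge points must be pushed far enough out in their shells that two consecutive ones always share a common region $Q_{\lambda,\rho_j}(\ovx)\bs\bll_{\eta^2\rho_j}(\ovx)$ with the uniform inner radius $\eta^2\rho_j$, and this is exactly what forces the two-fold contraction $\eta\mapsto\eta^2$ — hence the $\eta^2\lambda$, rather than $\eta\lambda$, in the bound. No case analysis is needed beyond the one already inside \cref{L:guMeansDiff}.
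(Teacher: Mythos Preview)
Your proof is correct and follows essentially the same telescoping strategy as the paper: insert corkscrew points into the annular shells $Q'_j$, observe that consecutive points lie in $Q_{\lambda,\rho_j}(\ovx)\bs\bll_{\eta^2\rho_j}(\ovx)$, apply \cref{L:guMeansDiff} with the $\eta$-parameter taken as $\eta^2$ (yielding $\lambda'=\eta^2\lambda$ and $\delta=3C_\bnd\rho_j=\delta_j$), and sum. The paper streamlines the chain slightly by setting $\vx_k:=\vx$ and $\vx_{k'}:=\vx'$ directly (permissible since $\vx\in Q'_k$ and $\vx'\in Q'_{k'}$ by hypothesis), which eliminates your extra bridge point $\vx^{(k)}$ and the accompanying factor of $2$; but this is cosmetic, not substantive.
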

\begin{proof}
By \cref{L:Hypotheses}(\ref{L:Corkscrew}), hypothesis (H1) implies $Q'_j\neq\emptyset$, for each $j\in\whls$. We may therefore select a sequence $\{\vx_j\}_{j=0}^\infty\subseteq Q_\lambda(\ovx)$ with the following properties:
\[
    \vx_j\in Q'_j,\quad \vx_k=\vx\nd\vx_{k'}=\vx'.
\]
We notice that $\vx_j,\vx_{j+1}\in Q'_j\cup Q'_{j+1}=Q_{\lambda,\rho_j}\bs\bll_{\eta^2\rho_j}(\ovx)$. \cref{L:guMeansDiff} implies
\[
    |g(\vx_{j+1})-g(\vx_j)|^p\le c\vep_{\lambda'}^{-n-p}\delta_j^{\ul{\alpha}_{\delta_j}(\ovx)}
        \nu^{s(\cdot),p}(\bll_{\delta_j}(\ovx).
\]
Taking the $\supth{p}$-root and summing for $j=k,\dots,k'-1$ yields the result.
\end{proof}

\section{Existence of a Trace}\label{S:Exists}

It is clear that $g$ is continuous in $\dom$. We next show that $g$ can be continuously extended to those points $\ovx\in\bnd$ where $\ul{\alpha}_0(\ovx)>t$. Where they exist, the values of this extension on $\bnd$ can be used to define the trace of $u$. To this end, we observe that
\[
    \vx\mapsto\lp\fslint{\Psi(\vx)}\frac{|u(\vy)-u(\vx)|}{d_{\partial\dom}(\vx)^{s(\vx)}}\rp^p
    \in L^1(\dom),
\]
so $\nu^{s(\cdot),p}(\cdot;u)$ is a measure that is absolutely continuous with respect to the Lebesgue measure. It therefore satisfies the hypotheses of \cref{L:ModGiusti}. Consequently, for each $\tau\ge0$, the set
\[
  S_\tau:=\lc\vx\in\ren
    :\limsup_{\rho\to0^+}\rho^{-\tau}
      \nu^{s(\cdot),p}(\bll_\rho(\vx);u)=\infty\rc
\]
has Hausdorff dimension of at most $\tau$ and $\haus^\tau(S_\tau)=0$. Recall that we are working under the assumption that $\bnd$ has Hausdorff dimension $t$. Thus, if $0\le\tau<t$, then
\begin{equation}\label{E:limsupNuBound}
  \limsup_{\rho\to0^+}\rho^{-\tau}
      \nu^{s(\cdot),p}(\bll_\rho(\ovx))<\infty,
  \quad\text{for $\haus^t$-a.e. }\ovx\in\bnd.
\end{equation}
For each $\omega\in\re$ and $\tau\ge0$, set
\[
    A_{\omega,0}:=\lc\ovx\in\bnd:\ul{\alpha}_0(\ovx)+\omega\ge0\rc
    \in\class{B}(\partial\dom)
    \nd
    \bnd_\tau:=\bnd\bs S_\tau\in\class{B}(\partial\dom),
\]
and define $M_\tau:\bnd\to[0,\infty]$ by
\[
    M_\tau(\ovx):=\sup_{0<\delta\le\delta_\bnd}\rho^{-\tau}
        \nu^{s(\cdot),p}\lp\bll_\rho(\ovx)\rp<\infty.
\]
Clearly $M_\tau$ is Borel-measurable and $\bnd_{\tau}\supseteq\bnd_{\tau'}$ and $M_{\tau}(\ovx)\le M_{\tau'}(\ovx)$, whenever $0\le\tau\le\tau'$. Moreover, from the discussion above $\haus^t(\bnd\bs\bnd_\tau)=0$, for all $0\le\tau\le t$.

The following theorem identifies points $\ovx\in\bnd$ where we can use \cref{L:gx'gxDiffBnd} to establish that there is an $\re$-valued continuous extension of $g$ to $\ovx$.
\begin{theorem}\label{T:gGenResults}
Assume (H1) and (H2). Let $\tau_0\ge 0$ be given. With $\omega_0<\tau_0$, suppose that $\ovx\in A_{\omega_0,0}\cap\bnd_{\tau_0}$. Then, there exists a $g(\ovx)\in\re$ such that, for each $0<\lambda<1$,
\begin{equation}\label{E:gLimit}
    \lim_{\vx\to\ov{\vx};\;\vx\in Q_\lambda(\ov{\vx})}g(\vx)=g(\ovx).
\end{equation}
Moreover, for each $0<\beta<(\tau_0-\omega_0)/p$, there exists $\delta_\beta=\delta_\beta(\ovx)$ such that
\begin{equation}\label{E:gHoldCont}
    |g(\ovx)-g(\vx)|
    \le
    C M_{\tau_0}(\ovx)^\frac{1}{p}
        \|\ovx-\vx\|^\beta,
    \frl\vx\in Q_{\lambda,\delta_\beta}(\ovx).
\end{equation}
Here
\[
    C=C(\eta_0,\lambda,\beta)
    :=
    c\vep_{\lambda'}^{-\frac{n+p}{p}}\eta_0^{-\beta}\lp1-\eta_0^\beta\rp^{-1}
    \nd
    \lambda':=\min\{\eta_0^2\lambda_0,\eta_0^2\lambda\}.
\]
\end{theorem}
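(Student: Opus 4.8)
The plan is to feed the two defining properties of the point $\ovx\in A_{\omega_0,0}\cap\bnd_{\tau_0}$ into the telescoping estimate of \cref{L:gx'gxDiffBnd}. Membership in $A_{\omega_0,0}$ gives $\ul{\alpha}_0(\ovx)\ge-\omega_0>-\tau_0$; membership in $\bnd_{\tau_0}$ gives $M_{\tau_0}(\ovx)<\infty$, hence $\nu^{s(\cdot),p}(\bll_\rho(\ovx))\le M_{\tau_0}(\ovx)\,\rho^{\tau_0}$ for all $0<\rho\le\delta_\bnd$. Before invoking the lemma I would reduce to $\lambda\le\lambda_0$: by \cref{L:Hypotheses}(\ref{L:Corkscrew}), $Q_\lambda(\ovx)\subseteq Q_{\lambda_0}(\ovx)$ when $\lambda\ge\lambda_0$, so it suffices to prove \eqref{E:gLimit} and \eqref{E:gHoldCont} with $Q_\lambda$ replaced by $Q_{\hat\lambda}$, where $\hat\lambda:=\min\{\lambda,\lambda_0\}$; this is also where $\lambda'=\eta_0^2\hat\lambda=\min\{\eta_0^2\lambda_0,\eta_0^2\lambda\}$ comes from. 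Throughout, take $\eta=\eta_0$ and, as in \cref{L:gx'gxDiffBnd}, set $\rho_j:=\eta_0^j\rho_\bnd$, $\delta_j:=\eta_0^j\delta_\bnd$, and $Q'_j:=Q_{\hat\lambda,\rho_j}\bs\bll_{\rho_{j+1}}(\ovx)$, each nonempty by (H1) and \cref{L:Hypotheses}(\ref{L:Corkscrew}).

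The one analytic observation is the monotonicity that makes the sum geometric. Since $\ul{s}_\delta(\ovx)=\inf_{\vx\in\dom_\delta(\ovx)}s(\vx)$ increases as $\delta\downarrow0$ and $\alpha(\cdot,\theta_\bnd(\ovx))$ is increasing, $\ul{\alpha}_\delta(\ovx)$ increases to $\ul{\alpha}_0(\ovx)$ as $\delta\downarrow0$. Hence, fixing any $0<\beta<(\tau_0-\omega_0)/p$, from $\ul{\alpha}_0(\ovx)+\tau_0\ge\tau_0-\omega_0>p\beta$ there is an integer $J=J(\ovx,\beta)$ with $\ul{\alpha}_{\delta_j}(\ovx)+\tau_0\ge p\beta$ for all $j\ge J$. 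Combined with $\nu^{s(\cdot),p}(\bll_{\delta_j}(\ovx))\le M_{\tau_0}(\ovx)\delta_j^{\tau_0}$ and $0<\delta_j<1$ (recall $\delta_\bnd<1$), this bounds the $j$-th summand in \cref{L:gx'gxDiffBnd} by $M_{\tau_0}(\ovx)^{1/p}\delta_j^{\beta}=M_{\tau_0}(\ovx)^{1/p}\delta_\bnd^{\beta}\eta_0^{j\beta}$ for $j\ge J$, a geometric sequence of ratio $\eta_0^\beta<1$.

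Now the rest is assembly. Choosing any sequence $\{\vx_j\}_{j\ge0}$ with $\vx_j\in Q'_j$, \cref{L:gx'gxDiffBnd} together with the summand bound gives, for $J\le k<k'$, $|g(\vx_{k'})-g(\vx_k)|\le c\,\vep_{\lambda'}^{-(n+p)/p}M_{\tau_0}(\ovx)^{1/p}\delta_\bnd^{\beta}\sum_{j\ge k}\eta_0^{j\beta}\to0$ as $k\to\infty$; so $\{g(\vx_j)\}$ is Cauchy and converges to some $\ell_{\hat\lambda}\in\re$. For $\vx\in Q_{\hat\lambda,\delta_\beta}(\ovx)$ with $\delta_\beta:=\rho_J$ we have $\vx\in Q'_k$ for some $k\ge J$, and applying \cref{L:gx'gxDiffBnd} to the pair $\vx\in Q'_k$, $\vx_{k'}\in Q'_{k'}$ and sending $k'\to\infty$ yields
\[
    |\ell_{\hat\lambda}-g(\vx)|
    \le
    c\,\vep_{\lambda'}^{-(n+p)/p}M_{\tau_0}(\ovx)^{1/p}\delta_\bnd^{\beta}\sum_{j\ge k}\eta_0^{j\beta}
    =
    c\,\vep_{\lambda'}^{-(n+p)/p}M_{\tau_0}(\ovx)^{1/p}\,\frac{\delta_\bnd^{\beta}\eta_0^{k\beta}}{1-\eta_0^\beta}.
\]
Since $\vx\in Q'_k$ forces $\|\ovx-\vx\|_{\ren}\ge\rho_{k+1}=\eta_0^{k+1}\rho_\bnd$ and $\rho_\bnd=\delta_\bnd/(3C_\bnd)$, we get $\delta_\bnd^{\beta}\eta_0^{k\beta}=(3C_\bnd)^{\beta}(\eta_0^{k}\rho_\bnd)^{\beta}\le(3C_\bnd)^{\beta}\eta_0^{-\beta}\|\ovx-\vx\|_{\ren}^{\beta}$; absorbing $(3C_\bnd)^{\beta}$ into $c$ gives exactly the estimate \eqref{E:gHoldCont} with $g(\ovx)$ replaced by $\ell_{\hat\lambda}$ and $C=c\,\vep_{\lambda'}^{-(n+p)/p}\eta_0^{-\beta}(1-\eta_0^\beta)^{-1}$. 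Finally, for two apertures $\lambda_1\le\lambda_2$ one has $Q_{\hat\lambda_2}\subseteq Q_{\hat\lambda_1}$ (and the same $\delta_\beta$, since $J$ does not depend on $\lambda$), so evaluating the just-proved bound for $\ell_{\hat\lambda_1}$ and for $\ell_{\hat\lambda_2}$ at a common point of $Q_{\hat\lambda_2,\delta_\beta}(\ovx)$ and letting that point tend to $\ovx$ forces $\ell_{\hat\lambda_1}=\ell_{\hat\lambda_2}$; calling the common value $g(\ovx)$, the last display is \eqref{E:gHoldCont}, and \eqref{E:gLimit} follows at once by letting $\vx\to\ovx$.

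I expect the only real subtlety to lie in the monotonicity step: $\ul{\alpha}_{\delta_j}(\ovx)$ is merely \emph{eventually}, not uniformly in $j$, above $p\beta-\tau_0$, which is precisely why the radius $\delta_\beta$ (equivalently the index $J$) must be allowed to depend on $\ovx$ and $\beta$. A secondary but necessary point is the verification that the limiting value does not depend on the auxiliary sequence $\{\vx_j\}$ or on the cone aperture $\lambda$, which is what makes $g(\ovx)$ a well-defined real number attached to $\ovx$ alone; the estimate \eqref{E:gHoldCont} itself supplies this once it is in hand.
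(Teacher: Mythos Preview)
Your proposal is correct and follows essentially the same approach as the paper: exploit the monotonicity $\ul{\alpha}_\delta(\ovx)\uparrow\ul{\alpha}_0(\ovx)$ to find a threshold index beyond which $\ul{\alpha}_{\delta_j}(\ovx)+\tau_0\ge p\beta$, combine $\nu^{s(\cdot),p}(\bll_{\delta_j}(\ovx))\le M_{\tau_0}(\ovx)\delta_j^{\tau_0}$ with \cref{L:gx'gxDiffBnd} to make the telescoping sum geometric, and then convert $\delta_k^\beta$ to $\|\ovx-\vx\|^\beta$ via $\rho_{k+1}\le\|\ovx-\vx\|$. The only organizational difference is that the paper first builds $g(\ovx)$ along the fixed $\lambda_0$-corkscrew sequence (using $\beta_0/2$) and then, for general $\lambda\le\lambda_0$, splices the given point $\vx$ into that same sequence so that aperture- and sequence-independence come for free, handling $\lambda>\lambda_0$ at the end by the inclusion $Q_\lambda\subseteq Q_{\lambda_0}$; you instead reduce to $\hat\lambda=\min\{\lambda,\lambda_0\}$ at the outset and verify independence of the limit afterward, which is equally valid.
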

\begin{proof}
First we identify a candidate for $g(\ov{\vx})$. We then establish~\eqref{E:gHoldCont}, and~\eqref{E:gLimit} immediately follows.

Put $\lambda_0':=\lambda_0\eta_0^2$. For each $j\in\whls$, also define $\rho_j:=\eta_0^j\rho_\bnd$, $\delta_j:=\eta_0^j\delta_\bnd=3C_\bnd\rho_j$, and $Q'_j:=Q_{\lambda_0,\rho_j}\bs\bll_{\rho_{j+1}}(\ovx)$. As in \cref{L:gx'gxDiffBnd}, by hypotheses (H1), we may select $\vx_j\in Q'_j$. Put $\beta_0:=(\tau_0-\omega_0)/p$. Since $\ovx\in A_{\omega_0,0}$, we have $0<\beta_0\le(\ul{\alpha}_0(\ovx)+\tau_0)/p$. Since $\ovx\in\bnd_{\tau_0}$, we also have $M_{\tau_0}(\ovx)<\infty$. It was noted before that $\ul{\alpha}_\delta(\ovx)$ increases as $\delta\searrow0^+$. We deduce that there exists $k_0\in\whls$ such that $\ul{\alpha}_{\delta_j}(\ovx)+\frac{1}{2}(\tau_0+\omega_0)\ge0$, for every $j\ge k_0$. Thus, given $k'\ge k_0$, \cref{L:gx'gxDiffBnd} yields
\begin{align*}
    \sum_{j=k_0}^{k'}|g(\vx_{j+1})-g(\vx_j)|
    &\le
    c\vep_{\lambda'_0}^{-\frac{n+p}{p}}\sum_{j=k_0}^{\infty}
        \lp\delta_j^{\alpha_{\delta_j(\ovx)}}
            \nu^{s(\cdot),p}\lp\bll_{\delta_j}(\ovx)\rp\rp^\frac{1}{p}\\
    &=
    c\vep_{\lambda'_0}^{-\frac{n+p}{p}}\sum_{j=k_0}^{\infty}
        \lp\delta_j
            ^{\lp\alpha_{\delta_j}(\ovx)+\frac{\tau_0+\omega_0}{2}\rp
                +\lp\tau_0-\smfrac{\tau_0+\omega_0}{2}\rp}
            \delta_j^{-\tau_0}\nu^{s(\cdot),p}\lp\bll_{\delta_j}(\ovx)
            \rp\rp^\frac{1}{p}\\
    &\le
    c\lp\vep_{\lambda'_0}^{-n-p}M_{\tau_0}(\ovx)\rp^\frac{1}{p}\delta_\bnd^{\frac{1}{2}\beta_0}
        \sum_{j=k_0}^\infty\eta_0^{j\frac{\beta_0}{2}}\\
    &=
    c\lp\vep_{\lambda'_0}^{-n-p}M_{\tau_0}(\ovx)\rp^\frac{1}{p}
        \lp\delta_\bnd\eta_0^{k_0}\rp^{\frac{1}{2}\beta_0}
        \lp1-\eta_0^{\frac{1}{2}\beta_0}\rp^{-1}.
\end{align*}
As the upper bound is independent of $k'$, we conclude that $\{g(\vx_j)\}_{j=0}^\infty$ is a Cauchy sequence and must converge to some value in $\re$, which we identify as $g(\ovx)\in\re$.

We now prove~\eqref{E:gHoldCont} for $0<\lambda<1$. First, suppose that $0<\lambda\le\lambda_0$, and let $0<\beta<\beta_0$ be given. Then $\tau_0-\beta p>\omega_0$, so we may select $k_\beta\in\whls$ such that $\alpha_{\delta_j}(\ovx)+(\tau_0-p\beta)\ge 0$, for all $j\ge k_\beta$. Let $\vx\in Q_{\lambda',\rho_{k_\beta}}(\ovx)$ be given. There exists a unique $k\ge k_\beta$ such that $\rho_{k+1}\le\|\ovx-\vx\|_{\ren}<\rho_k$. Since $Q_{\lambda_0}(\ovx)\subseteq Q_\lambda(\ovx)$, we may define $\{\vx'_j\}_{j=k}^\infty\subseteq Q_{\lambda}(\ovx)$ by $\vx'_k=\vx$ and $\vx'_j:=\vx_j$, for $j>k$. Put $\lambda':=\eta_0^2\lambda$. As argued above, \cref{L:gx'gxDiffBnd} implies
\begin{align*}
    |g(\ovx)-g(\vx)|
    &=
    \lim_{k'\to\infty}|g(\vx_{k'})-g(\vx_k)|
    \le
    c\vep_{\lambda'}^{-\frac{n+p}{p}}\sum_{j=k}^\infty
    \lp\delta_j^{\alpha_{\delta_j}(\ovx)}
        \nu^{s(\cdot),p}\lp\bll_{\delta_j}(\ovx)\rp\rp^\frac{1}{p}\\
    &\le
    c\lp\vep_{\lambda'}^{-n-p}M_{\tau_0}(\ovx)\rp^\frac{1}{p}
        \sum_{j=k}^\infty\lp\delta_j^{(\alpha_{\delta_j}(\ovx)-\beta p)+\beta p}\rp^\frac{1}{p}\\
    &\le
    c\lp\vep_{\lambda'}^{-n-p}M_{\tau_0}(\ovx)\rp^\frac{1}{p}\delta_\bnd^{\beta}
        \sum_{j=k}^\infty\eta_0^{j\beta}\\
    &\le
    c(3C_\bnd)^\beta\lp\vep_{\lambda'}^{-n-p}M_{\tau_0}(\ovx)\rp^\frac{1}{p}\lp1-\eta_0^\beta\rp^{-1}
        \lp\eta_0^k\rho_\bnd\rp^\beta\\
    &\le
    c\eta_0^{-\beta}\lp\vep_{\lambda'}^{-n-p}M_{\tau_0}(\ovx)\rp^\frac{1}{p}
        \lp1-\eta_0^\beta\rp^{-1}
        \|\ovx-\vx\|_{\ren}^\beta.
\end{align*}
This proves~\eqref{E:gHoldCont}, with $\delta_\beta:=\eta_0^{k_\beta}\rho_\bnd$. If on the other hand $\lambda_0<\lambda<1$, then $\vx\in Q_{\lambda_0}(\ovx)$. The same argument, with the sequence $\{\vx_j\}_{j=k}^\infty\subseteq Q_{\lambda_0}(\ovx)$ identified above, may be used.
\end{proof}

\begin{corollary}\label{C:LebPropTrace}
    Assume (H1) and (H2). Let $\tau_0\ge0$ and $\omega_0<\tau_0$ be given. If $\ovx_0\in A_{\omega_0,0}\cap\bnd_{\tau_0}$, then
\[
    \lim_{\rho\to0^+}\fslint{Q_{\lambda,\rho}(\ovx_0)}|u(\vx)-g(\ovx_0)|^p\dd\vx=0,
    \frl0<\lambda<\lambda_0.
\]
\end{corollary}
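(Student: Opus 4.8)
\emph{Setup.} Throughout, $g(\ovx_0)$ denotes the value furnished by \cref{T:gGenResults} (applicable since $\ovx_0\in A_{\omega_0,0}\cap\bnd_{\tau_0}$ with $\omega_0<\tau_0$). The plan is to split, for $\vx\in Q_{\lambda,\rho}(\ovx_0)$,
\[
    |u(\vx)-g(\ovx_0)|^p\le c\,|u(\vx)-g(\vx;u)|^p+c\,|g(\vx;u)-g(\ovx_0)|^p ,
\]
and show each of the two $L^p$-averages over $Q_{\lambda,\rho}(\ovx_0)$ tends to $0$ as $\rho\to0^+$. The second is immediate: fixing $0<\beta<(\tau_0-\omega_0)/p$, \cref{T:gGenResults} supplies $\delta_\beta(\ovx_0)>0$ and $C<\infty$ with $|g(\vx;u)-g(\ovx_0)|\le CM_{\tau_0}(\ovx_0)^{1/p}\|\ovx_0-\vx\|^\beta$ on $Q_{\lambda,\delta_\beta}(\ovx_0)$, so for $\rho\le\delta_\beta(\ovx_0)$ one gets $\fslint{Q_{\lambda,\rho}(\ovx_0)}|g(\vx;u)-g(\ovx_0)|^p\dd\vx\le C^pM_{\tau_0}(\ovx_0)\rho^{\beta p}\to0$.

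\emph{The main term.} For the first average I would begin from the pointwise bound coming from $\Phi(\vx)\subseteq\Psi(\vx)$, $|\Psi(\vx)|=3^n|\Phi(\vx)|$, and Jensen's inequality: $|u(\vx)-g(\vx;u)|\le 3^n\fslint{\Psi(\vx)}|u(\vy)-u(\vx)|\dd\vy=3^n d_{\partial\dom}(\vx)^{s(\vx)}h(\vx)$, where $h(\vx):=\fslint{\Psi(\vx)}d_{\partial\dom}(\vx)^{-s(\vx)}|u(\vy)-u(\vx)|\dd\vy$ obeys $\slint{\dom\cap E}h^p\dd\vx=\nu^{s(\cdot),p}(E;u)$ for every Borel $E$. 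Put $\theta:=\theta_\bnd(\ovx_0)$ and $R:=\sup_{\vx\in Q_{\lambda,\rho}(\ovx_0)}d_{\partial\dom}(\vx)$. For $\vx\in Q_{\lambda,\rho}(\ovx_0)$ one has $d_{\partial\dom}(\vx)\le R<\rho$ and $\Psi(\vx)\subseteq\bll_{3\rho/2}(\ovx_0)$, hence $s(\vx)\ge\ul{s}_{3\rho/2}(\ovx_0)$, and therefore $d_{\partial\dom}(\vx)^{ps(\vx)}\le R^{p\ul{s}_{3\rho/2}(\ovx_0)}$ once $\rho<1$; combined with $Q_{\lambda,\rho}(\ovx_0)\subseteq\dom\cap\bll_{3\rho/2}(\ovx_0)$ and $\ovx_0\in\bnd_{\tau_0}$,
\begin{align*}
    \slint{Q_{\lambda,\rho}(\ovx_0)}|u(\vx)-g(\vx;u)|^p\dd\vx
    &\le cR^{p\ul{s}_{3\rho/2}(\ovx_0)}\,\nu^{s(\cdot),p}(\bll_{3\rho/2}(\ovx_0);u)\\
    &\le cM_{\tau_0}(\ovx_0)\,R^{p\ul{s}_{3\rho/2}(\ovx_0)}\rho^{\tau_0}.
\end{align*}

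\emph{The main obstacle: a sharper volume bound.} The step I expect to be the crux is an improvement of \cref{L:Hypotheses}(\ref{L:CorkscrewVol}) to $|Q_{\lambda,\rho}(\ovx_0)|\ge cR^n$, with $c$ allowed to depend on $\ovx_0$ (hence on $\theta$). I would argue by a dichotomy on $R$ versus $\rho^{\theta}$. If $R\le C_1\rho^{\theta}$ for a suitable $C_1=C_1(\lambda,\theta)$, then $R\ge(\eta_0\lambda_0\rho)^{\theta}$ together with $|Q_{\lambda,\rho}(\ovx_0)|\ge c\rho^{n\theta}$ from \cref{L:Hypotheses}(\ref{L:CorkscrewVol}) gives $|Q_{\lambda,\rho}(\ovx_0)|\ge c\rho^{n\theta}\ge cR^n$. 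If $R>C_1\rho^{\theta}$, choose $\vx_R\in Q_{\lambda,\rho}(\ovx_0)$ with $d_{\partial\dom}(\vx_R)\ge R/2$; then $\bll_{\sigma d_{\partial\dom}(\vx_R)}(\vx_R)\subseteq\dom$, and for $\sigma=\sigma(\theta)$ small every point of this ball still satisfies the inequality $d_{\partial\dom}(\cdot)>(\lambda\|\ovx_0-\cdot\|)^{\theta}$ defining $Q_\lambda(\ovx_0)$ --- this is exactly where the margin afforded by $R>C_1\rho^{\theta}$, i.e.\ $d_{\partial\dom}(\vx_R)$ being comfortably larger than $(\lambda\|\ovx_0-\vx_R\|)^{\theta}$, is used --- so, intersecting with $\bll_\rho(\ovx_0)$ (which costs only a dimensional factor, since $\vx_R\in\bll_\rho(\ovx_0)$), one obtains $|Q_{\lambda,\rho}(\ovx_0)|\ge cR^n$.

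\emph{Conclusion.} Granting this, the first average is $\le cM_{\tau_0}(\ovx_0)R^{p\ul{s}_{3\rho/2}(\ovx_0)-n}\rho^{\tau_0}$, and since $(\eta_0\lambda_0)^{\theta}\rho^{\theta}\le R\le\rho$ we may bound $R^{p\ul{s}_{3\rho/2}(\ovx_0)-n}\le c\rho^{e(\rho)}$, where $e(\rho):=\min\{p\ul{s}_{3\rho/2}(\ovx_0)-n,\ \theta(p\ul{s}_{3\rho/2}(\ovx_0)-n)\}$ (use $R\le\rho$ when $p\ul{s}_{3\rho/2}(\ovx_0)-n\ge0$ and $R\ge(\eta_0\lambda_0)^{\theta}\rho^{\theta}$ otherwise, absorbing the bounded constant $(\eta_0\lambda_0)^{\theta(p\ul{s}_{3\rho/2}(\ovx_0)-n)}$ into $c$). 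Hence the first average is $\le cM_{\tau_0}(\ovx_0)\rho^{e(\rho)+\tau_0}$. As $\rho\to0^+$, $\ul{s}_{3\rho/2}(\ovx_0)\to\ul{s}_0(\ovx_0):=\lim_{\delta\to0^+}\ul{s}_\delta(\ovx_0)$, so by continuity of $\alpha$, $\ul{\alpha}_0(\ovx_0)=\alpha(\ul{s}_0(\ovx_0),\theta)\ge-\omega_0>-\tau_0$. An elementary check on $\alpha$ --- were $p\ul{s}_0(\ovx_0)-n\le-\tau_0/\theta$, then since $-\tau_0/\theta\le p-1$ we are in the first branch of $\alpha$, where $\alpha(\ul{s}_0(\ovx_0),\theta)=p(1-\theta)+(p\ul{s}_0(\ovx_0)-n)\theta\le p(1-\theta)-\tau_0\le-\tau_0$ by $\theta\ge1$, a contradiction --- yields $p\ul{s}_0(\ovx_0)-n>-\tau_0/\theta$, so both $\theta(p\ul{s}_0(\ovx_0)-n)+\tau_0$ and $p\ul{s}_0(\ovx_0)-n+\tau_0$ are strictly positive, whence $e(\rho)+\tau_0$ stays above a positive constant for small $\rho$ and $\rho^{e(\rho)+\tau_0}\to0$. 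Combining with the second average completes the proof.
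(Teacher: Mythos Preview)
Your argument is correct and follows essentially the paper's route: the same splitting into $|u(\vx)-g(\vx)|^p$ and $|g(\vx)-g(\ovx_0)|^p$, the H\"older estimate~\eqref{E:gHoldCont} for the second piece, and the semi-norm bound on the first piece combined with a lower volume estimate $|Q_{\lambda,\rho}(\ovx_0)|\ge cR^n$. For that volume estimate the paper avoids your dichotomy with a one-line observation: any $\vx\in Q_{\lambda,\rho}\setminus Q_{\lambda_0,\rho}(\ovx_0)$ satisfies $d_{\partial\dom}(\vx)\le(\lambda_0\|\ovx_0-\vx\|)^{\theta}\le(\lambda_0\rho)^{\theta}\le\eta_0^{-\theta}R_0$, so $R\le\eta_0^{-\theta}R_0$ and \cref{L:Hypotheses}(\ref{L:CorkscrewVol}) already yields $|Q_{\lambda,\rho}(\ovx_0)|\ge cR_0^n\ge c\eta_0^{n\theta}R^n$.
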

\begin{proof}
We use the notation from \cref{T:gGenResults}, and put $\theta_0:=\theta_\bnd(\ovx_0)$. Let $0<\beta<(\tau_0-\omega_0)/p$ and $0<\rho<\delta_\beta(\ovx_0)$ be given. Put
\[
    R_0:=\sup_{\vx\in Q_{\lambda_0,\rho}(\ovx_0)}d_{\partial\dom}(\vx)
    \nd
    R:=\sup_{\vx\in Q_{\lambda,\rho}(\ovx_0)}d_{\partial\dom}(\vx).
\]
In the argument that follows, the constant $c<\infty$ is independent of $\rho$. \cref{L:Hypotheses}(\ref{L:CorkscrewVol}) implies $|Q_{\lambda,\rho}|\ge c(1-\lambda/\lambda_0)^{n\theta_0}R_0^n$ and $R_0\ge(\eta_0\lambda_0\rho)^{\theta_0}$. For each $\vx\in Q_{\lambda,\rho}\bs Q_{\lambda_0,\rho}(\ovx_0)$, we have
\[
    (\lambda\|\ovx_0-\vx\|_{\ren})^{\theta_0}
    <
    d_{\partial\dom}(\vx)
    \le
    (\lambda_0\|\ovx_0-\vx\|_{\ren})^{\theta_0}
    \le(\lambda_0\rho)^{\theta_0}
    \le\eta_0^{-\theta_0}R_0.
\]
We deduce that $R_0\le R\le\eta_0^{-\theta_0}R_0$. Thus,
\begin{align*}
    &\fslint{Q_{\lambda,\rho}(\ovx_0)}|u(\vx)-g(\ovx_0)|^p\dd\vx\\
    &\qquad\le
    c\lp\fslint{Q_{\lambda,\rho}(\ovx_0)}|u(\vx)-g(\ovx)|^p\dd\vx
    +\fslint{Q_{\lambda,\rho}(\ovx_0)}|g(\vx)-g(\ovx_0)|^p\dd\vx\rp\\
    &\qquad\le
    c(1-\lambda/\lambda_0)^{-n\theta_0}\lp\frac{R^{p\ul{s}_\rho(\ovx_0)}}{R_0^n}\rp
        \nu^{s(\cdot),p}(Q_{\lambda,\rho}(\ovx_0))\\
    &\qqqquad\qqquad+
    cCM_{\tau_0}(\ovx_0)\fslint{Q_{\lambda,\rho}(\ovx_0)}\|\ovx_0-\vx\|^{p\beta}\dd\vx\\
    &\qquad\le
    c\lp R^{p\ul{s}_\rho(\ovx_0)-n}\nu(\bll_\rho(\ovx_0))
    +CM_{\tau_0}(\ovx_0)\rho^{p\beta}\rp.
\end{align*}
If $p\ul{s}_0(\ovx_0)>n$, then for $\rho>0$ sufficiently small, we find $p\ul{s}_\rho(\ovx_0)-n\ge (p\ul{s}_0(\ovx_0)-n)/2>0$ and
\[
    \fslint{Q_{\lambda,\rho}(\ovx_0)}|u(\vx)-g(\ovx_0)|^p\dd\vx
    \le
    c\lp\rho^{\frac{p\ul{s}_0(\ovx_0)-n}{2}}+C\rho^{p\beta}\rp
    M_{\tau_0}(\ovx_0).
\]
If, on the other hand, we have $p\ul{s}_0(\ovx_0)\le n\le n+p-1$, then for any $0<\rho<\delta_\beta$, we must have $p\ul{s}_\rho(\ovx_0)-n\le0$ and $R^{p\ul{s}_\rho(\ovx_0)-n}\le c\rho^{p\ul{s}_\rho(\ovx_0)-n}\le c\rho^{(p\ul{s}_\rho(\ovx_0)-n)\theta_0}$. Hence,
\begin{align*}
    \fslint{Q_{\lambda,\rho}(\ovx_0)}|u(\vx)-g(\ovx_0)|^p\dd\vx
    \le&
    c\lp\rho^{(p\ul{s}_\rho(\ovx_0)-n)\theta_0}\nu(\bll_\rho(\ovx_0))
    +CM_{\tau_0}(\ovx_0)\rho^{p\beta}\rp\\
    \le&
    c\lp\rho^{\ul{\alpha}_\rho(\ovx_0)}\rho^{p(\theta_0-1)}\nu(\bll_\rho(\ovx_0))
    +CM_{\tau_0}(\ovx_0)\rho^{p\beta}\rp\\
    \le&
    c\lp\rho^{(\tau_0-\omega_0)+p(\theta_0-1)}+C\rho^{p\beta}M_{\tau_0}(\ovx_0)\rp.
\end{align*}
In either case, the result follows.
\end{proof}

A straightforward application of \cref{T:gGenResults} provides a proof for the existence of a trace of $u$ on $\bnd$.
\begin{proof}[Proof for \cref{T:TraceExists}]
In addition to (H1) and (H2), we assume $\ul{\alpha}_0(\ovx)+t>0$, for $\haus^t$-a.e. $\ovx\in\bnd$. For each $\omega\in\re$, define $A_{\omega^-,0}:=\bigcup_{\omega'<\omega}A_{\omega',0}$. Set
\[
    \bnd':=\bigcup_{\tau\ge0}A_{\tau^-,0}\cap\bnd_\tau.
\]
As the sets $\{A_{\omega'}\}_{\omega'\in\re}$ are nested and Borel-measurable, we deduce that $\bnd'$ is also measurable. Moreover, we find $A_{t^-,0}\cap\bnd_t\subseteq\bnd'$. Thus, assumption (H3$'$) and~\eqref{E:limsupNuBound} implies $\haus^t(\bnd/(A_{t^-,0}\cap\bnd_t))=0$. Therefore $\haus^t(\bnd\bs\bnd')=0$. Define $Tu\in L(\bnd)$ by
\begin{equation}\label{E:TraceDef}
    Tu(\ov{\vx}):=\lc\begin{array}{ll}
        g(\ov{\vx}), & \ov{\vx}\in\bnd',\\
        0, & \vx\in\bnd\bs\bnd'.
    \end{array}\rt
\end{equation}
The measurability of $Tu$ is a consequence of the measurability of $\bnd'$ and $g$, in $\dom$. Since $\ovx\in A_{\tau^-,0}\cap\bnd_\tau$ implies there must be $\omega<\tau$ such that $\ovx\in A_{\omega,0}\cap\bnd_\tau$, \cref{T:TraceExists} follows from \cref{T:gGenResults}.
\end{proof}

\begin{definition}\label{D:uTrace}
Under assumptions (H1), (H2), and (H3$\,'$), we identify the function $Tu\in L(\bnd)$ defined in~\eqref{E:TraceDef} as the \emph{trace} of $u$ on $\bnd$.
\end{definition}

\section{Properties of the Trace}\label{S:Props}

We next focus on \cref{T:TraceProp}. Throughout this section, we use $Tu$ as provided by~\eqref{E:TraceDef}. Put $\eta_1:=\min\{\frac{1}{2},\eta_0\}$. For each $\omega\in\re$, $0<\lambda\le\lambda_0$, $0<\rho\le\rho_\bnd$ and $0<\delta\le\delta_\bnd$, set
\[
    A_{\omega,\delta}
    :=
    \lc\ovx\in\bnd: \ul{\alpha}_{\delta}(\ovx)+\omega\ge0\rc\in\class{B}(\partial\dom),
\]
and define $F_{\lambda,\rho}:\bnd\twoheadrightarrow\dom$ and $g_\rho,G_{\delta}:\bnd\to\re$ by
\[
    F_{\lambda,\rho}(\ovx):=Q_{\lambda,\rho}\bs\bll_{\eta_1\rho}(\ovx),
    \:\;
    g_\rho(\ovx):=\fslint{F_{\lambda_0,\rho}(\ovx)}g(\vx)\dd\vx,
    \text{ and }
    G_{\delta}(\ovx):=
        \nu^{s(\cdot),p}\lp\bll_{\delta}(\ovx)\rp.
\]
We note that, since $\vx\mapsto \|\vx\|_{\ren},d_{\partial\dom}(\vx)$ is a continuous function, assumption (H1) implies $F_{\lambda,\rho}(\ovx)$ has nonempty interior and $|F_{\lambda,\rho}(\ovx)|>0$, for all $\ovx\in\bnd$. We also see that $\ovx\in A_{\omega,\delta}\subseteq A_{\omega',\delta'}$, for all $\omega'\ge\omega$ and $0<\delta'\le\delta$.


To establish the regularity and Lebesgue property of the trace, we need a refinement of~\eqref{E:gHoldCont}.
\begin{lemma}\label{L:gxBarResult}
Assume (H1) and (H2). Let $\tau\ge0$, $\omega<\tau$, and $0<\lambda\le\lambda_0$ be given. With $k\in\whls$ and $0<\delta\le\eta_1^k\delta_\bnd$, put $\rho:=\frac{\delta}{3C_\bnd}$, and suppose that $\ovx\in A_{\omega,\delta}\cap\bnd_\tau$.  Then
\begin{equation}\label{E:gxgxBarDiffBnd}
    |Tu(\ovx)-g(\vx)|
    \le
    c\vep_{\lambda'}^{-\frac{n+p}{p}}\sum_{j=k}^\infty\lp\eta_1^{-j\omega}
        G_{\eta_1^j\delta_\bnd}(\ovx)\rp^\frac{1}{p},
    \frl\vx\in Q_{\lambda,\rho}(\ovx).
\end{equation}
Here $\lambda':=\eta_1^2\lambda$.
\end{lemma}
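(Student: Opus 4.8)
The plan is to bootstrap from \cref{T:gGenResults} and \cref{L:gx'gxDiffBnd}, but to track the dependence on $\delta$ more carefully so that the decay rate is expressed through the scaled quantities $\eta_1^{-j\omega}G_{\eta_1^j\delta_\bnd}(\ovx)$ rather than through $M_\tau(\ovx)$ and a fixed exponent $\beta$. First I would fix $\ovx\in A_{\omega,\delta}\cap\bnd_\tau$ and set $\theta:=\theta_\bnd(\ovx)$, $\rho:=\delta/(3C_\bnd)$, and for $j\ge k$ define $\rho_j:=\eta_1^j\rho$, $\delta_j:=\eta_1^j\delta=3C_\bnd\rho_j$, and the annular sets $Q'_j:=Q_{\lambda,\rho_j}\bs\bll_{\rho_{j+1}}(\ovx)$. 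Hypothesis (H1) together with \cref{L:Hypotheses}(\ref{L:Corkscrew}) guarantees each $Q'_j$ is nonempty (note $\delta\le\eta_1^k\delta_\bnd$ is exactly what keeps all these radii below the corkscrew radius $\delta_\bnd$), so I can pick a sequence $\{\vx_j\}_{j\ge k}$ with $\vx_j\in Q'_j$. The point $Tu(\ovx)=g(\ovx)$ is, by the proof of \cref{T:gGenResults}, the limit of $g(\vx_j)$, and I would reconstruct that telescoping bound here.

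The core estimate is the per-step bound from \cref{L:guMeansDiff}: since $\vx_j,\vx_{j+1}\in Q_{\lambda,\rho_j}\bs\bll_{\eta_1\rho_j}(\ovx)$ (absorbing the annular overlap exactly as in the proof of \cref{L:gx'gxDiffBnd}, with $\eta=\eta_1$), we get
\[
    |g(\vx_{j+1})-g(\vx_j)|^p
    \le
    c\,\vep_{\lambda'}^{-n-p}\,\delta_j^{\ul{\alpha}_{\delta_j}(\ovx)}\,G_{\delta_j}(\ovx),
\]
with $\lambda':=\eta_1^2\lambda$ (or $\min\{\eta_1^2\lambda_0,\eta_1^2\lambda\}$ if one wants uniformity, but $\eta_1^2\lambda$ suffices since $\lambda\le\lambda_0$). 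Now I use the membership $\ovx\in A_{\omega,\delta}$, which says $\ul{\alpha}_\delta(\ovx)+\omega\ge0$; because $\ul{\alpha}_{\delta'}(\ovx)$ is nondecreasing as $\delta'\searrow0$ (this monotonicity is recorded right after the definition of $\ul{\alpha}$), we have $\ul{\alpha}_{\delta_j}(\ovx)\ge\ul{\alpha}_\delta(\ovx)\ge-\omega$ for every $j\ge k$. Hence $\delta_j^{\ul{\alpha}_{\delta_j}(\ovx)}\le\delta_j^{-\omega}$ when $\delta_j\le1$ (which holds since $\delta\le\delta_\bnd<1$), and writing $\delta_j=\eta_1^j\delta$ gives $\delta_j^{-\omega}\le c\,\eta_1^{-j\omega}$ with $c=c(\delta_\bnd,\omega)$ — or, cleaner, I would just phase the $\delta_\bnd$ factor into the constant and write $\delta_j^{-\omega}= \delta_\bnd^{-\omega}\eta_1^{-j\omega}\cdot$(correction), absorbing $\delta_\bnd^{-\omega}$ into $c$, so that $\delta_j^{\ul{\alpha}_{\delta_j}(\ovx)}G_{\delta_j}(\ovx)\le c\,\eta_1^{-j\omega}G_{\eta_1^j\delta_\bnd}(\ovx)$ after noting $\delta_j=\eta_1^j\delta\le\eta_1^j\delta_\bnd$ and monotonicity of $G$. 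Taking $p$-th roots, summing the convexity/triangle-inequality telescope
\[
    |g(\ovx)-g(\vx_k)|
    \le
    \sum_{j=k}^\infty|g(\vx_{j+1})-g(\vx_j)|
    \le
    c\,\vep_{\lambda'}^{-\frac{n+p}{p}}\sum_{j=k}^\infty\lp\eta_1^{-j\omega}G_{\eta_1^j\delta_\bnd}(\ovx)\rp^{\frac1p},
\]
and then, for a general $\vx\in Q_{\lambda,\rho}(\ovx)$, replacing $\vx_k$ by $\vx$ in the $j=k$ term (both lie in $Q_{\lambda,\rho}\bs\bll_{\eta_1\rho}(\ovx)$, so \cref{L:guMeansDiff} still applies to the pair $\vx,\vx_{k+1}$) yields exactly \eqref{E:gxgxBarDiffBnd}.

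The main obstacle, and the place requiring the most care, is the bookkeeping on the exponent: one must be sure that $\ul{\alpha}_{\delta_j}(\ovx)\ge-\omega$ uniformly in $j$ (this is where $\ovx\in A_{\omega,\delta}$ — not merely $A_{\omega,\delta_\bnd}$ — is essential, combined with the monotonicity of $\ul{\alpha}_{(\cdot)}$ toward $0^+$), and that the constraint $\delta\le\eta_1^k\delta_\bnd$ is used precisely to keep $\rho_k\le\rho\le\rho_\bnd$ so that \cref{L:guMeansDiff} and (H1) are applicable at every scale in the chain. The only mild subtlety is tracking that the convergence of $g(\vx_j)$ to $Tu(\ovx)=g(\ovx)$ — already proved in \cref{T:gGenResults} under $\ovx\in A_{\omega,0}\cap\bnd_\tau$ — still holds here: since $\ovx\in A_{\omega,\delta}$ with $\omega<\tau$ and $\ovx\in\bnd_\tau$, one has $\ul{\alpha}_0(\ovx)\ge\ul{\alpha}_\delta(\ovx)\ge-\omega>-\tau$, so $\ovx\in A_{\omega,0}\cap\bnd_\tau$ and \cref{T:gGenResults} indeed applies, giving the limit identity needed to pass from the Cauchy tail bound to \eqref{E:gxgxBarDiffBnd}. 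No genuinely new idea is needed beyond \cref{L:guMeansDiff}; this lemma is a quantitatively sharpened repackaging of the argument already run in \cref{T:gGenResults}.
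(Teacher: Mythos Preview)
Your overall strategy matches the paper's --- telescope along a chain in $Q_\lambda(\ovx)$ via \cref{L:guMeansDiff}/\cref{L:gx'gxDiffBnd}, exploit $\ul{\alpha}_{\delta_j}(\ovx)\ge-\omega$ from $\ovx\in A_{\omega,\delta}$, and pass to the limit using \cref{T:gGenResults}. But two bookkeeping points are genuinely off.

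First, anchoring the dyadic scales at $\delta$ rather than $\delta_\bnd$ breaks the key estimate. With $\delta_j=\eta_1^{j}\delta$ (or $\eta_1^{j-k}\delta$), the ratio $\delta_j^{-\omega}/\eta_1^{-j\omega}$ equals $(\eta_1^{-k}\delta)^{-\omega}$, which is unbounded as $\delta\to0^+$ whenever $\omega>0$; your claimed inequality $\delta_j^{-\omega}\le c\,\eta_1^{-j\omega}$ with $c=c(\delta_\bnd,\omega)$ therefore fails, and ``phasing $\delta_\bnd^{-\omega}$ into the constant'' does not repair it. The paper instead sets $\rho_j:=\eta_1^j\rho_\bnd$ and $\delta_j:=\eta_1^j\delta_\bnd$, so that $\delta_j^{-\omega}=\delta_\bnd^{-\omega}\eta_1^{-j\omega}$ exactly. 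The price is that for the unique $k_0\ge k$ with $\delta_{k_0+1}<\delta\le\delta_{k_0}$ one may have $\delta_{k_0}>\delta$, and then $\ul{\alpha}_{\delta_{k_0}}(\ovx)\ge-\omega$ is \emph{not} guaranteed by $\ovx\in A_{\omega,\delta}$; the paper treats that single step separately, applying \cref{L:guMeansDiff} at scale $\rho$ (so that the exponent is $\ul{\alpha}_\delta(\ovx)\ge-\omega$) and then bounding $\delta^{\ul{\alpha}_\delta(\ovx)}\nu^{s(\cdot),p}(\bll_\delta(\ovx))\le c\,\delta_{k_0}^{-\omega}G_{\delta_{k_0}}(\ovx)$.

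Second, your assertion that a general $\vx\in Q_{\lambda,\rho}(\ovx)$ lies in $Q_{\lambda,\rho}\setminus\bll_{\eta_1\rho}(\ovx)$ is false: $\vx$ can sit arbitrarily close to $\ovx$. The paper handles this by introducing a second index $k_1\ge k_0$ for the annulus actually containing $\vx$, setting $\vx_{k_1}:=\vx$, running the chain from $j=k_1$, and then bounding the resulting tail $\sum_{j\ge k_1}$ by the larger tail $\sum_{j\ge k}$.
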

\begin{proof}
For $j\in\whls$, put $\rho_j:=\eta_1^j\rho_\bnd$ and $\delta_j:=\eta_1^j\delta_\bnd=3C_\bnd\rho_j$. There exists unique $k_1\ge k_0\ge k$ such that $\vx\in Q_{\lambda,\rho_{k_1}}\bs\bll_{\rho_{k_1+1}}(\ovx)$ and $\delta_{k_0+1}<\delta\le\delta_{k_0}$. We may select $\{\vx_j\}_{j=k_0}^\infty\subseteq Q_{\lambda,\rho_k}(\ovx)$ such that $\vx_j\in Q_{\lambda,\rho_j}\bs\bll_{\rho_{j+1}}(\ovx)$, for each $j\ge k_0$, and $\vx_{k_1}=\vx$. We see that $A_{\omega,\delta_j}\subseteq A_{\omega,\delta}\subseteq A_{\omega,0}$, for all $j\ge k_0+1$. Since $\omega<\tau$, \cref{T:gGenResults} implies $g(\vx_j)\to Tu(\ovx)$ in $Q_\lambda(\ovx)$, as $j\to\infty$. If $k_1=k_0$, we might have $\delta_{k_1}=\delta_{k_0}\ge\delta$ and $\ul{\alpha}_\delta(\ovx)\ge-\omega>\ul{\alpha}_{\delta_{k_0}}(\ovx)=\ul{\alpha}_{\delta_{k_1}}(\ovx)$. In which case, we find
\[
    \vx_{k_1+1},\vx_{k_1}
    \in
    Q_{\lambda,\rho}\bs\bll_{\eta_1\rho_{k_1+1}}(\ovx)
    \subseteq
    Q_{\lambda,\rho}\bs\bll_{\eta_1^2\rho}(\ovx)
\]
and may use \cref{L:guMeansDiff} to get
\begin{align*}
    |g(\vx_{k_1+1})-g(\vx)|
    =&
    |g(\vx_{k_1+1})-g(\vx_{k_1})|
    \le
    c\vep_{\lambda'}^{-\frac{n+p}{p}}\lp\delta^{\ul{\alpha}_{\delta}(\ovx)}
        \nu^{s(\cdot),p}\lp\bll_{\delta}(\ovx)\rp\rp^\frac{1}{p}\\
    \le&
    c\vep_{\lambda'}^{-\frac{n+p}{p}}\lp\delta_{k_1}^{-\omega}
        \nu^{s(\cdot),p}\lp\bll_{\delta_{k_1}}(\ovx)\rp\rp^\frac{1}{p}
\end{align*}
In any case, we have $\ul{\alpha}_{\delta_j}\ge-\omega$, for $j\ge k_0+1$. Using \cref{L:gx'gxDiffBnd} and passing to the limit as $k'\to+\infty$, we obtain
\begin{align*}
    |Tu(\ovx)-g(\vx)|
    \le&
    \lim_{k'\to\infty}|g(\vx_{k'})-g(\vx_{k_1)}|\\
    \le&
    c\vep_{\lambda'}^{-\frac{n+p}{p}}\sum_{j=k_1+1}^\infty\lp\delta_j^{\ul{\alpha}_{\delta_j}(\ovx)}
        \nu^{s(\cdot),p}\lp\bll_{\delta_j}(\ovx)\rp\rp^\frac{1}{p}
    +c\vep_{\lambda'}^{-\frac{n+p}{p}}\lp\delta_{k_1}^{\ul{\alpha}_{\delta}(\ovx)}
        \nu^{s(\cdot),p}\lp\bll_{\delta}(\ovx)\rp\rp^\frac{1}{p}\\
    \le&
    c\vep_{\lambda'}^{-\frac{n+p}{p}}\sum_{j=k_1}^\infty\lp\delta_j^{-\omega}
        \nu^{s(\cdot),p}\lp\bll_{\delta_j}(\ovx)\rp\rp^\frac{1}{p}.
\end{align*}
The result follows from $k_1\ge k_0\ge k$ and the definition of $G_{\delta_j}$.
\end{proof}
We will also need
\begin{lemma}\label{L:TugDiffBndG}
Assume (H1) and (H2). Let $\tau\ge0$, $\omega<\tau$, and $0<\lambda\le\lambda_0$ be given. With $k\in\whls$ and $0<\delta\le\eta_1^k\delta_\bnd$, put $\rho:=\frac{\delta}{3C_\bnd}$, and suppose that $\bnd''\subseteq A_{\omega,\delta}\cap\bnd_\tau$ is $\haus^t$-measurable and that $F:\bnd''\twoheadrightarrow\dom$ satisfies $F(\ovx)\subseteq Q_{\lambda,\rho}(\ovx)$ and $|F(\ovx)|>0$, for each $\ovx\in\bnd''$. Then, with $\lambda':=\eta_1^2\lambda$, we have
\[
    \slint{\bnd''}
        \fslint{F(\ovx)}\ns|Tu(\ovx)-g(\vx)|^p\dd\vx\dd\haus^t(\ovx)
    \le
    c\vep_{\lambda'}^{-n-p}\ns
        \lp\sum_{j=k}^\infty\lp\eta_1^{-j\omega}\ns
            \slint{\bnd''}\ns
            G_{\eta_1^j\delta_\bnd}(\ovx)\dd\haus^t(\ovx)\rp^\frac{1}{p}\rp^p\ns.
\]
\end{lemma}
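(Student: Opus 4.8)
The plan is to obtain the integral bound directly from the pointwise estimate already established in \cref{L:gxBarResult}. Since $\bnd''\subseteq A_{\omega,\delta}\cap\bnd_\tau$ with $0<\delta\le\eta_1^k\delta_\bnd$ and $\rho=\delta/(3C_\bnd)$, the hypotheses of \cref{L:gxBarResult} hold at every $\ovx\in\bnd''$, so for each such $\ovx$ and every $\vx\in Q_{\lambda,\rho}(\ovx)$ we have
\[
    |Tu(\ovx)-g(\vx)|
    \le
    c\vep_{\lambda'}^{-\frac{n+p}{p}}\sum_{j=k}^\infty\lp\eta_1^{-j\omega}
        G_{\eta_1^j\delta_\bnd}(\ovx)\rp^\frac{1}{p},
\]
with $\lambda'=\eta_1^2\lambda$. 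Because $F(\ovx)\subseteq Q_{\lambda,\rho}(\ovx)$, this holds in particular for all $\vx\in F(\ovx)$, and the right-hand side is independent of $\vx$. Raising to the $p$-th power and averaging over $F(\ovx)$ (legitimate since $|F(\ovx)|>0$) I would get
\[
    \fslint{F(\ovx)}|Tu(\ovx)-g(\vx)|^p\dd\vx
    \le
    c\vep_{\lambda'}^{-n-p}\lp\sum_{j=k}^\infty\lp\eta_1^{-j\omega}
        G_{\eta_1^j\delta_\bnd}(\ovx)\rp^\frac{1}{p}\rp^p,
    \frl\ovx\in\bnd''.
\]

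Next I would integrate this inequality over $\bnd''$ against $\haus^t$ and move the integral inside the $\ell^1$-sum by Minkowski's inequality, i.e.\ the triangle inequality in $L^p(\bnd'',\haus^t)$ applied to the nonnegative functions $\ovx\mapsto(\eta_1^{-j\omega}G_{\eta_1^j\delta_\bnd}(\ovx))^{1/p}$:
\[
    \lp\slint{\bnd''}\lp\sum_{j=k}^\infty\lp\eta_1^{-j\omega}
        G_{\eta_1^j\delta_\bnd}(\ovx)\rp^\frac{1}{p}\rp^p\dd\haus^t(\ovx)\rp^\frac{1}{p}
    \le
    \sum_{j=k}^\infty\lp\eta_1^{-j\omega}\slint{\bnd''}
        G_{\eta_1^j\delta_\bnd}(\ovx)\dd\haus^t(\ovx)\rp^\frac{1}{p}.
\]
Raising to the $p$-th power and combining with the previous display yields the claimed estimate.

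The routine points to check are measurability issues: that $G_\delta=\nu^{s(\cdot),p}(\bll_\delta(\cdot))$ is Borel on $\bnd$ (this follows from the fact, noted before \cref{T:gGenResults}, that $\nu^{s(\cdot),p}(\cdot;u)$ is an absolutely continuous measure), and that $\ovx\mapsto\fslint{F(\ovx)}|Tu(\ovx)-g(\vx)|^p\dd\vx$ is $\haus^t$-measurable so that the left-hand integral makes sense; in the applications $F=F_{\lambda,\rho}$ is given by an explicit formula in $\ovx$, so this causes no trouble. The only genuine subtlety is interchanging the infinite summation with integration: I would handle this by first applying the finite triangle inequality to the partial sums, then letting the number of terms go to infinity via monotone convergence, which is valid since all summands are nonnegative and requires no hypothesis beyond finiteness of the right-hand side. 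Thus there is no real obstacle; the lemma is essentially the $L^p(\bnd'',\haus^t)$-averaged form of \cref{L:gxBarResult}.
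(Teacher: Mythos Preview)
Your proposal is correct. Both you and the paper start from the same pointwise estimate in \cref{L:gxBarResult} and then pass to the integrated bound; the only difference is in how that passage is packaged. You invoke Minkowski's inequality in $L^p(\bnd'',\haus^t)$ directly to push the integral past the infinite sum, which is clean and transparent. The paper instead writes $|Tu(\ovx)-g(\vx)|^p=|Tu(\ovx)-g(\vx)|^{p-1}\cdot|Tu(\ovx)-g(\vx)|$, bounds the last factor by the pointwise sum, swaps sum and integral by monotone convergence, applies H\"older with exponents $p/(p-1)$ and $p$, uses Jensen on the averaged $(p-1)$-power, and then divides through by the $(p-1)/p$ power of the left-hand side. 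That self-improvement step is, of course, exactly a proof of the Minkowski inequality you quote, so the two arguments are really the same inequality seen from two angles. Your version is a bit more economical and sidesteps the need to assume a priori that the left-hand side is finite (which the paper's division step implicitly requires); the paper's version has the minor virtue of making the $p=1$ case a one-liner without any appeal to Minkowski.
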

\begin{proof}
We may assume that the integral on in the lower bound is positive. Define $\{\delta_j\}_{j=1}^\infty\subset(0,\delta_\bnd]$ and $\{\rho_j\}_{j=1}^\infty\subset(0,\rho_\bnd]$ as in \cref{L:gxBarResult}. For each $\ovx\in\bnd_\tau$ and $\vx\in Q_{\lambda,\rho}(\ovx)$, \cref{L:gxBarResult} provides
\[
    |Tu(\ovx)-g(\vx)|
    \le
    c\vep_{\lambda'}^{-\frac{n+p}{p}}
        \sum_{j=k}^\infty\delta_j^{-\frac{\omega}{p}}\lp G_{\delta_j}(\ovx)\rp^\frac{1}{p}.
\]
If $p=1$, we are done after taking the mean of both sides over $F(\ovx)$ and integrating over $\bnd''$. Otherwise, the monotone convergence theorem and H\'{o}lder's inequality yields
\begin{align*}
    &\slint{\bnd''}\fslint{F(\ovx)}|Tu(\ovx)-g(\vx)|^p\dd\vx\dd\haus^t(\ovx)\\
    &\qquad\le
    c\vep_{\lambda'}^{-\frac{n+p}{p}}\sum_{j=k}^\infty\delta_j^{-\frac{\omega}{p}}
    \slint{\bnd''}\lp\fslint{F(\ovx)}|Tu(\ovx)-g(\vx)|^{p-1}\dd\vx\rp
        \lp G_{\delta_j}(\ovx)\rp^\frac{1}{p}\dd\haus^t(\ovx)\\
    &\qquad\le
    c\vep_{\lambda'}^{-\frac{n+p}{p}}\sum_{j=k}^\infty\delta_j^{-\frac{\omega}{p}}
    \lp\slint{\bnd''}\lp\fslint{F(\ovx)}|Tu(\ovx)-g(\vx)|^{p-1}\dd\vx
        \rp^\frac{p}{p-1}\dd\haus^t(\ovx)\rp^\frac{p-1}{p}\\
    &\qqqquad\qqqquad\qqquad\qquad
    \times\lp\slint{\bnd''}G_{\delta_j}(\ovx)\dd\haus^t(\ovx)\rp^\frac{1}{p}.
\end{align*}
We may apply Jensen's inequality to the first integral and obtain
\begin{multline*}
     \slint{\bnd''}\flint{F(\ovx)}\ns|Tu(\ovx)-g(\vx)|^p\dd\vx\dd\haus^t(\ovx)\\
     \le
     c\vep_{\lambda'}^{-\frac{n+p}{p}}\ns
     \lp\slint{\bnd''}\fslint{F(\ovx)}\ns\!|Tu(\ovx)-g(\vx)|^p\dd\vx
        \dd\haus^t(\ovx)\!\rp^{\!\frac{p-1}{p}}\ns
     \sum_{j=k}^\infty\delta_j^{-\frac{\omega}{p}}\ns
     \lp\slint{\bnd''}\ns\! G_{\delta_j}(\ovx)\dd\haus^t(\ovx)\!\rp^{\!\frac{1}{p}}\nss,
\end{multline*}
which implies the result after dividing both sides by the term in parentheses.

\end{proof}
Finally, we need the followinng lemma whose proof is the same as an analogous result in~\cite{MayGuyFen:18a}.
\begin{lemma}\label{L:supGBnd}
Assume (H4${}^\prime$). For each $0<\delta\le\delta_\bnd$ and $\haus^t$-measurable $\bnd''\subseteq\bnd$, we have
\[
    \slint{\bnd''}G_{\delta}(\ovx)\dd\haus^t(\ovx)
    \le
    c\delta^t\nu^{s(\cdot),p}\lp\bll_{2\delta}(\bnd'')\rp.
\]
Here $c$ is independent of $\delta_0.$
\end{lemma}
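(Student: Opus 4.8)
The plan is to unfold the definition of $G_\delta$, reduce the double integral to an iterated one by Tonelli's theorem, and then control the resulting vertical slice using the upper Ahlfors bound (H4$'$). Concretely, recall from the beginning of Section~\ref{S:Exists} that $\nu^{s(\cdot),p}(\cdot\,;u)$ is a measure absolutely continuous with respect to Lebesgue measure, with nonnegative Borel density $h\in L^1(\dom)$ given by
\[
    h(\vx):=\lp\fslint{\Psi(\vx)}
        \frac{|u(\vy)-u(\vx)|}{d_{\partial\dom}(\vx)^{s(\vx)}}\dd\vy\rp^p,
\]
so that $\nu^{s(\cdot),p}(E;u)=\int_{\dom\cap E}h(\vx)\dd\vx$ for every $E\in\class{B}(\ren)$; in particular $G_\delta(\ovx)=\int_{\dom\cap\bll_\delta(\ovx)}h(\vx)\dd\vx$. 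Since the map $(\vx,\ovx)\mapsto h(\vx)\chi_{\{\|\vx-\ovx\|_{\ren}<\delta\}}$ is nonnegative and jointly Borel, Tonelli's theorem gives
\[
    \slint{\bnd''}G_\delta(\ovx)\dd\haus^t(\ovx)
    =
    \slint{\bnd''}\slint{\dom}h(\vx)\chi_{\bll_\delta(\ovx)}(\vx)\dd\vx\dd\haus^t(\ovx)
    =
    \slint{\dom}h(\vx)\,\haus^t\lp\bnd''\cap\bll_\delta(\vx)\rp\dd\vx,
\]
where in the last step we used the symmetry $\|\vx-\ovx\|_{\ren}<\delta\iff\ovx\in\bll_\delta(\vx)$.

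Next I would bound the slice $\haus^t(\bnd''\cap\bll_\delta(\vx))$ uniformly in $\vx$. If $\bnd''\cap\bll_\delta(\vx)=\emptyset$ the slice is null; otherwise fix some $\ovx_0\in\bnd''\cap\bll_\delta(\vx)$, so that $\bnd''\cap\bll_\delta(\vx)\subseteq\bnd\cap\bll_{2\delta}(\ovx_0)$ by the triangle inequality and hence $\haus^t(\bnd''\cap\bll_\delta(\vx))\le A_\bnd(2\delta)^t=2^tA_\bnd\delta^t$ by (H4$'$). The existence of such an $\ovx_0$ also forces $\vx\in\bll_\delta(\bnd'')\subseteq\bll_{2\delta}(\bnd'')$. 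Discarding the $\vx$ with empty slice and inserting these two observations into the identity above yields
\[
    \slint{\bnd''}G_\delta(\ovx)\dd\haus^t(\ovx)
    \le
    2^tA_\bnd\delta^t\slint{\dom\cap\bll_{2\delta}(\bnd'')}h(\vx)\dd\vx
    =
    2^tA_\bnd\delta^t\,\nu^{s(\cdot),p}\lp\bll_{2\delta}(\bnd'')\rp,
\]
which is the asserted estimate with $c=2^tA_\bnd$, depending only on $t$ and $A_\bnd$ and in particular independent of $\delta$.

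I expect no genuine obstacle here: the argument is a Fubini/Tonelli exchange followed by the $t$-dimensional upper-regularity bound on boundary balls, exactly as in the cited reference~\cite{MayGuyFen:18a}. The only point meriting justification is the joint measurability needed to apply Tonelli, which is immediate since $h$ is Borel on $\ren$ and $\{(\vx,\ovx):\|\vx-\ovx\|_{\ren}<\delta\}$ is open in $\ren\times\ren$, so the product measure $\Leb^n\times\haus^t$ on $\dom\times\bnd''$ may be used; this incidentally re-confirms that $\ovx\mapsto G_\delta(\ovx)$ is $\haus^t$-measurable.
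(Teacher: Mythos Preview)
Your proof is correct, but it takes a genuinely different route from the paper. The paper argues via a bounded-overlap cover: it invokes \cref{L:BoundedOverlap} to produce points $\{\ovy_i\}_{i\in I}\subseteq\bnd''$ with $\bnd''\subseteq\bigcup_i\bll_\delta(\ovy_i)$ and $\sum_i\chi_{\bll_{2\delta}(\ovy_i)}\le c$, then bounds $G_\delta(\ovx)\le\nu^{s(\cdot),p}(\bll_{2\delta}(\ovy_i))$ on each $\bnd''\cap\bll_\delta(\ovy_i)$, applies (H4$'$) to $\haus^t(\bnd''\cap\bll_\delta(\ovy_i))$, and sums using bounded overlap. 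Your argument bypasses the cover entirely: writing $G_\delta(\ovx)=\int_{\dom}h(\vx)\chi_{\bll_\delta(\ovx)}(\vx)\dd\vx$ and applying Tonelli converts the problem to a pointwise bound on $\haus^t(\bnd''\cap\bll_\delta(\vx))$, which (H4$'$) handles directly after recentering at any $\ovx_0\in\bnd''\cap\bll_\delta(\vx)$. Your approach is more elementary and in fact yields the slightly sharper domain $\bll_\delta(\bnd'')$ in place of $\bll_{2\delta}(\bnd'')$; the paper's covering argument, on the other hand, is what underlies \cref{R:supGBnd}, where the bound is refined to $\nu^{s(\cdot),p}(U)$ with $U=\bigcup_i\bll_{2\delta}(\ovy_i)$ --- though your $\bll_\delta(\bnd'')$ is contained in any such $U$, so nothing is lost.
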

\begin{proof}
\cref{L:BoundedOverlap} delivers a countable index set $I$ and $\{\ov{\vy}_i\}_{i\in I}\subseteq\bnd''$ such that $\bnd''\subseteq\bigcup_{i\in I}\bll_\delta(\ov{\vy}_i)$ and $\sup_{\vx\in\ren}\sum_{i\in I}\chi_{\bll_{2\delta}(\ov{\vy}_i)}(\vx)\le c$. Now, for each $i\in I$ and $\ovx\in\bnd'\cap\bll_{\delta}(\ov{\vy}_i)$, we find $\dom\cap\bll_{\delta}(\ov{\vx})\subseteq\dom\cap\bll_{2\delta}(\ov{\vy}_i)$. Hence,
\[
    \sup_{\ovx\in\bnd'\cap\bll_{\delta}(\ov{\vy}_i)}G_{\delta}(\ov{\vx})
    =
    \sup_{\ovx\in\bnd'\cap\bll_{\delta}(\ov{\vy}_i)}\nu^{s(\cdot),p}(\bll_{\delta}(\ovx))
    \le
    \nu^{s(\cdot),p}\lp\bll_{2\delta}(\ov{\vy}_i)\rp.
\]
Using the upper Ahlfor's regularity assumption for $\bnd$, and thus for $\bnd''$, and the bounded overlap property of the family $\lc\bll_{2\delta}(\ov{\vy}_i)\rc_{i\in I}$, we obtain
\begin{align*}
    \slint{\bnd''}G_{\delta}(\ovx)\dd\haus^t(\ovx)
    &\le
    \sum_{i\in I}\slint{\bnd''\cap\bll_{\delta}(\ov{\vy}_i)}G_{\delta}(\ovx)\dd\haus^t(\ovx)
    \le
    \sum_{i\in I}\haus^t\lp\bnd''\cap\bll_{\delta}(\ov{\vy}_i)\rp
        \nu^{s(\cdot),p}\lp\bll_{2\delta}(\ov{\vy}_i)\rp\\
    &\le
    A_\bnd\delta^{t}
        \sum_{i\in I}\nu^{s(\cdot),p}\lp\bll_{2\delta}(\ov{\vy}_i)\rp
    \le
    c\delta^{t}
        \nu^{s(\cdot),p}\lp\bll_{2\delta}(\bnd'')\rp.
\end{align*}
\end{proof}
\begin{remark}\label{R:supGBnd}
In the last line of the proof, we see that, in the upper bound, the term $\nu^{s(\cdot),p}\lp\bll_{2\delta}(\bnd'')\rp$ can be made more precise with $\nu^{s(\cdot),p}\lp U\rp$, with $U:=\bigcup_{i\in I}\bll_{2\delta}(\ovy_i)$.
\end{remark}

\begin{lemma}\label{L:TugLpBnd}
Assume (H1), (H2), and (H4$'$). Let $\omega<t$, and $0<\lambda\le\lambda_0$ be given. With $k\in\whls$ and $0<\delta\le\eta_1^k\delta_\bnd$, suppose that $\bnd''\subseteq A_{\omega,\delta}\cap\bnd_t$ is $\haus^t$-measurable. Then, for each $0<\rho\le\frac{\delta}{3C_\bnd}$, we have
\[
    \|Tu-g_\rho\|_{L^p(\bnd'')}
    \le
    C_1\eta_1^{k\lp\frac{t-\omega}{p}\rp}|u|_{\class{N}^{s(\cdot),p}\lp\bll_{2\eta_1^k\delta_0}(\bnd'')\rp},
\]
with
\[
    C_1=C_1(\lambda,\eta_1,\omega)=c\vep_{\lambda'}^{-\frac{n+p}{p}}
        \lp1-\eta_1^{\frac{t-\omega}{p}}\rp^{-1}
    \nd
    \lambda':=\eta_1^2\lambda.
\]
\end{lemma}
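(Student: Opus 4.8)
The plan is to peel the claim back to \cref{L:TugDiffBndG} and \cref{L:supGBnd} by a single application of Jensen's inequality, then sum a geometric series. First I would note that, since $\lambda\le\lambda_0$, \cref{L:Hypotheses}(\ref{L:Corkscrew}) gives
\[
    F_{\lambda_0,\rho}(\ovx)=Q_{\lambda_0,\rho}\bs\bll_{\eta_1\rho}(\ovx)
    \subseteq Q_{\lambda_0,\rho}(\ovx)\subseteq Q_{\lambda,\rho}(\ovx),
\]
and that $|F_{\lambda_0,\rho}(\ovx)|>0$ for every $\ovx\in\bnd$, as recorded after the definition of $F_{\lambda,\rho}$. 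Since $g_\rho(\ovx)$ is the mean of $g$ over $F_{\lambda_0,\rho}(\ovx)$ and $Tu(\ovx)$ is constant in $\vx$, Jensen's inequality yields
\[
    \|Tu-g_\rho\|^p_{L^p(\bnd'')}
    \le
    \slint{\bnd''}\fslint{F_{\lambda_0,\rho}(\ovx)}|Tu(\ovx)-g(\vx)|^p\dd\vx\dd\haus^t(\ovx).
\]

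Next I would set $\delta':=3C_\bnd\rho$, so that $\rho=\delta'/(3C_\bnd)$ and $0<\delta'\le\delta\le\eta_1^k\delta_\bnd$. The families $A_{\omega,\cdot}$ grow as the radius shrinks (because $\ul{\alpha}_\delta(\ovx)$ increases as $\delta\to0^+$), so $\bnd''\subseteq A_{\omega,\delta}\cap\bnd_t\subseteq A_{\omega,\delta'}\cap\bnd_t$. Thus \cref{L:TugDiffBndG}, applied with $\tau=t$, this $\delta'$, $F=F_{\lambda_0,\rho}$, and $\lambda':=\eta_1^2\lambda$, bounds the displayed right-hand side by
\[
    c\vep_{\lambda'}^{-n-p}\lp\sum_{j=k}^\infty\lp\eta_1^{-j\omega}\slint{\bnd''}G_{\eta_1^j\delta_\bnd}(\ovx)\dd\haus^t(\ovx)\rp^\frac{1}{p}\rp^p.
\]
Then \cref{L:supGBnd} gives $\slint{\bnd''}G_{\eta_1^j\delta_\bnd}(\ovx)\dd\haus^t(\ovx)\le c(\eta_1^j\delta_\bnd)^t\nu^{s(\cdot),p}\lp\bll_{2\eta_1^j\delta_\bnd}(\bnd'')\rp$, and for $j\ge k$ the inclusion $\bll_{2\eta_1^j\delta_\bnd}(\bnd'')\subseteq\bll_{2\eta_1^k\delta_\bnd}(\bnd'')$ together with monotonicity of $\nu^{s(\cdot),p}$ gives $\nu^{s(\cdot),p}\lp\bll_{2\eta_1^j\delta_\bnd}(\bnd'')\rp\le|u|^p_{\class{N}^{s(\cdot),p}(\bll_{2\eta_1^k\delta_\bnd}(\bnd''))}$. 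Substituting and absorbing the fixed factor $\delta_\bnd^t$ into $c$ leaves
\[
    \|Tu-g_\rho\|^p_{L^p(\bnd'')}
    \le
    c\vep_{\lambda'}^{-n-p}\,|u|^p_{\class{N}^{s(\cdot),p}(\bll_{2\eta_1^k\delta_\bnd}(\bnd''))}\lp\sum_{j=k}^\infty\eta_1^{j(t-\omega)/p}\rp^p.
\]

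Finally, since $\omega<t$ and $0<\eta_1<1$, the geometric series equals $\eta_1^{k(t-\omega)/p}\lp1-\eta_1^{(t-\omega)/p}\rp^{-1}$; taking $p$-th roots of the whole inequality produces the asserted bound with $C_1=c\vep_{\lambda'}^{-(n+p)/p}\lp1-\eta_1^{(t-\omega)/p}\rp^{-1}$. I do not anticipate a genuine obstacle: this is a bookkeeping chain through \cref{L:TugDiffBndG} and \cref{L:supGBnd}, and the only points that need care are the passage to the auxiliary radius $\delta'=3C_\bnd\rho$ on which \cref{L:TugDiffBndG} applies verbatim, the monotonicity of the sets $A_{\omega,\cdot}$ and of $\nu^{s(\cdot),p}$ on nested balls, and tracking the constant $\vep_{\lambda'}^{-(n+p)/p}$ through the summation.
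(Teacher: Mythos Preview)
Your proposal is correct and follows essentially the same route as the paper: reduce to \cref{L:TugDiffBndG} with $F=F_{\lambda_0,\rho}$, then invoke \cref{L:supGBnd} and sum the resulting geometric series in $\eta_1^{j(t-\omega)/p}$. The only difference is cosmetic: you obtain the bound $\|Tu-g_\rho\|_{L^p(\bnd'')}^p\le\int_{\bnd''}\fint_{F_{\lambda_0,\rho}(\ovx)}|Tu(\ovx)-g(\vx)|^p\,\dd\vx\,\dd\haus^t(\ovx)$ by a single application of Jensen's inequality to the mean defining $g_\rho$, whereas the paper reaches the same inequality via a slightly longer Jensen--H\"older manipulation; your version is the cleaner of the two.
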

\begin{proof}
Again, for each $j\in\nats$, put $\wh{\rho}_j:=\eta_1^j\delta_\bnd$ and $\rho_j:=\eta_1^j\rho_\bnd$. If $1<p<\infty$, then we apply Jensen's inequality and then H\"{o}lder's inequality to obtain
\begin{align*}
    \|Tu-g_{\rho}\|^p_{L^p(\bnd'')}
    &\le
    \slint{\bnd''}\lp|Tu(\ov{\vx})-g_{\rho}(\ov{\vx})|^{p-1}
        \fslint{F_{\lambda,\rho}(\ov{\vx})}|Tu(\ov{\vx})-g(\vx)|\dd\vx\rp\dd\haus^t(\ovx)\\
    &\le
    \|Tu-g_{\rho}\|^{p-1}_{L^p(\bnd'')}
        \lp\slint{\bnd''}
        \lp\fslint{F_{\lambda,\rho}(\ov{\vx})}|Tu(\ov{\vx})-g(\vx)|\dd\vx\rp^p\dd\haus^t(\ovx)\rp^\frac{1}{p}\\
    &\le
    \|Tu-g_{\rho}\|^{p-1}_{L^p(\bnd'')}
        \lp\slint{\bnd''}
        \fslint{F_{\lambda,\rho}(\ov{\vx})}|Tu(\ov{\vx})-g(\vx)|^p
            \dd\vx\dd\haus^t(\ovx)\rp^\frac{1}{p}.
\end{align*}
If $p=1$, then the above inequality follows from Jensen's inequality alone. \cref{L:TugDiffBndG}, with $F=F_{\lambda,\rho}$, allows us to continue with
\begin{align}
\nonumber
    \|Tu-g_{\rho}\|_{L^p(\bnd'')}
    &\le
    \slint{\bnd''}
        \fslint{F_{\lambda,\rho}(\ov{\vx})}|Tu(\ov{\vx})-g(\vx)|^p\dd\vx\dd\haus^t(\ovx)\\
\label{E:TugrkLpBnd1}
    &\le
    c\vep_{\lambda'}^{-\frac{n+p}{p}}
        \sum_{j=k}^\infty\lp\delta_j^{-\omega}
            \slint{\bnd''}G_{\delta_j}(\ovx)\dd\haus^t(\ovx)\rp^\frac{1}{p}.
\end{align}

Next, \cref{L:supGBnd} provides
\begin{align*}
    \|Tu-g_{\rho}\|_{L^p(\bnd'')}
    &\le
    c\vep_{\lambda'}^{-\frac{n+p}{p}}
    \sum_{j=k}^\infty\delta_j^\frac{t-\omega}{p}
        \lp\nu^{s(\cdot),p}\lp\bll_{2\delta_j}(\bnd'')\rp\rp^\frac{1}{p}\\
    &\le
    c\vep_{\lambda'}^{-\frac{n+p}{p}}\delta_k^\frac{t-\omega}{p}
        \lp\nu^{s(\cdot),p}\lp\bll_{2\delta_k}(\bnd'')\rp\rp^\frac{1}{p}
        \sum_{j=0}^\infty\eta_1^{j\lp\frac{t-\omega}{p}\rp}.
\end{align*}
Since $\omega<t$, the series above is convergent and the result follows.
\end{proof}

\cref{T:TraceProp}(a) is a consequence of our first main result for this section.
\begin{theorem}\label{T:uInWbp}
Assume (H1), (H2), and (H4$'$). Let $\omega<t$, and $0<\delta\le\delta_\bnd$ be given. Suppose that $\bnd''\subseteq A_{\omega,\delta}\cap\bnd_t$ is $\haus^t$-measurable. Then $u\in W^{\beta,p}(\bnd'')$, for each $0<\beta<(t-\omega)/p$.
\end{theorem}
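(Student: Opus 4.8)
The plan is to reduce the assertion to the finiteness of the Gagliardo seminorm $|Tu|_{W^{\beta,p}(\bnd'')}$ for each fixed $0<\beta<(t-\omega)/p$ and then to estimate that seminorm by a dyadic decomposition of $\bnd''\times\bnd''$ in $\|\ovx-\ovy\|_{\ren}$. Membership $Tu\in L^p(\bnd'')$ is essentially already recorded in \cref{L:TugLpBnd}: write $Tu=g_\rho+(Tu-g_\rho)$ for one admissible radius $\rho$; the second summand lies in $L^p(\bnd'')$ by that lemma, while $g_\rho$ is bounded since $u\in L^1(\dom)$ and $d_{\partial\dom}\ge(\lambda_0\eta_1\rho)^{\ov{\theta}_\bnd}>0$ throughout $\bigcup_{\ovx\in\bnd}F_{\lambda_0,\rho}(\ovx)$. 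Put $\rho_j:=\eta_1^j\delta_\bnd$ and $\delta_j:=3C_\bnd\rho_j=\eta_1^j\delta_\bnd$, fix a large integer $m_0$ (depending on $\delta$), and split the double integral into the part with $\|\ovx-\ovy\|_{\ren}>\rho_{m_0}$ and the part with $\|\ovx-\ovy\|_{\ren}\le\rho_{m_0}$. The first is $\le c\,\rho_{m_0}^{-\beta p}\|Tu\|^p_{L^p(\bnd'')}$ by (H4$'$) and a geometric sum over dyadic annuli, hence finite.

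The core of the argument is a pointwise bound for the second part: if $\ovx,\ovy\in\bnd''$ and $\rho_{m+1}<\|\ovx-\ovy\|_{\ren}\le\rho_m$ with $m\ge m_0$, then
\[
  |Tu(\ovx)-Tu(\ovy)|
  \le c\,\vep_{\lambda'}^{-\frac{n+p}{p}}\Bigl(\textstyle\sum_{j\ge m-c_0}\bigl(\eta_1^{-j\omega}G_{\delta_j}(\ovx)\bigr)^{1/p}+\sum_{j\ge m-c_0}\bigl(\eta_1^{-j\omega}G_{\delta_j}(\ovy)\bigr)^{1/p}\Bigr),
\]
with $c_0$ an absolute integer and $\lambda'\lesssim\eta_0$. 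To prove it I would assume, without loss of generality, that $\theta_\bnd(\ovx)\le\theta_\bnd(\ovy)$, and use (H1) to pick a corkscrew point $\vx_m\in Q_{\lambda_0}(\ovx)$ at scale $\rho_m$, so $\vx_m\in Q_{\lambda_0,\rho_m}(\ovx)$, $\|\ovx-\vx_m\|_{\ren}<\rho_m$, and $d_{\partial\dom}(\vx_m)>(\eta_0\rho_m)^{\theta_\bnd(\ovx)}$. Then \cref{L:gxBarResult} (with $\tau=t$, $\lambda=\lambda_0$, $\delta=\delta_m$, $\rho=\rho_m$; note $\ovx\in\bnd''\subseteq A_{\omega,\delta}\cap\bnd_t\subseteq A_{\omega,\delta_m}\cap\bnd_t$ for $m$ large) bounds $|Tu(\ovx)-g(\vx_m)|$ by the first sum. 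For the second, the triangle inequality gives $\|\ovy-\vx_m\|_{\ren}<2\rho_m$, and since $0<\eta_0\rho_m<1$ and $\theta_\bnd(\ovx)\le\theta_\bnd(\ovy)$ we get $d_{\partial\dom}(\vx_m)>(\eta_0\rho_m)^{\theta_\bnd(\ovx)}\ge(\eta_0\rho_m)^{\theta_\bnd(\ovy)}>\bigl((\eta_0/2)\|\ovy-\vx_m\|_{\ren}\bigr)^{\theta_\bnd(\ovy)}$, i.e. $\vx_m\in Q^{\theta_\bnd(\ovy)}_{\eta_0/2}(\ovy)\cap\bll_{2\rho_m}(\ovy)$; applying \cref{L:gxBarResult} at $\ovy$ (with an admissible parameter $\lambda\le\min\{\eta_0/2,\lambda_0\}$ and $\delta=2\delta_m$, which starts the sum at $m-c_0$) bounds $|Tu(\ovy)-g(\vx_m)|$ by the second sum. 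Adding and using $|Tu(\ovx)-Tu(\ovy)|\le|Tu(\ovx)-g(\vx_m)|+|g(\vx_m)-Tu(\ovy)|$ gives the displayed bound; the case $\theta_\bnd(\ovy)<\theta_\bnd(\ovx)$ is symmetric, so it holds without the normalization.

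It remains to feed this into the dyadic sum. On the $m$-th annulus $\|\ovx-\ovy\|_{\ren}^{-t-\beta p}\le c\,\delta_m^{-t-\beta p}$, and by Tonelli's theorem and (H4$'$) the inner integral over $\ovy$ (resp.\ over $\ovx$) costs a factor $\haus^t(\bnd\cap\bll_{\rho_m}(\cdot))\le A_\bnd\rho_m^t\le c\,\delta_m^t$, so the near part is controlled by $c\sum_{m\ge m_0}\delta_m^{-\beta p}\int_{\bnd''}\bigl(\sum_{j\ge m-c_0}(\eta_1^{-j\omega}G_{\delta_j}(\ovz))^{1/p}\bigr)^p\,\dd\haus^t(\ovz)$. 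I would bound the inner $p$-th power of the series by H\"older's inequality against a geometric weight $\sigma^{j-m}$, choosing $\sigma>1$ small enough that $\sigma\eta_1^{t-\omega}<1$, then integrate term-by-term via \cref{L:supGBnd}: $\int_{\bnd''}G_{\delta_j}\,\dd\haus^t\le c\,\delta_j^t\,\nu^{s(\cdot),p}(\bll_{2\delta_j}(\bnd''))\le c\,\delta_j^t\,|u|^p_{\class{N}^{s(\cdot),p}(\dom)}$. Interchanging the two summations and using $\eta_1^{-j\omega}\delta_j^t=\delta_\bnd^t\eta_1^{j(t-\omega)}$, the whole expression collapses to $c\,|u|^p_{\class{N}^{s(\cdot),p}(\dom)}\sum_{m\ge m_0}\eta_1^{m(t-\omega-\beta p)}$, which converges precisely because $\beta<(t-\omega)/p$. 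Together with the far part and the $L^p$-membership, this gives $Tu\in W^{\beta,p}(\bnd'')$.

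The hard part is the comparison of the two boundary points in the pointwise estimate. A corkscrew point for $\ovx$ sits deep inside a cone whose opening degenerates as $\theta_\bnd(\ovx)$ grows, and there is no reason for it to lie in a non-degenerate cone at $\ovy$. The fix --- routing the comparison through the corkscrew point at whichever of $\ovx,\ovy$ has the \emph{smaller} exponent $\theta_\bnd$, and exploiting the monotonicity $c^{\theta_1}\ge c^{\theta_2}$ for $0<c<1$ and $\theta_1\le\theta_2$ --- is exactly what makes \cref{L:gxBarResult} applicable at the second point; everything afterwards is geometric-series bookkeeping together with the already-established lemmas.
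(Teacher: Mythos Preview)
Your argument is correct and shares the paper's overall architecture: dyadic decomposition of $\bnd''\times\bnd''$ in $\|\ovx-\ovy\|_{\ren}$, a far part controlled by the $L^p$ bound (the paper's~\eqref{E:WapTraceBnd3}), and a near part resolved by the $\theta_\bnd$-ordering trick you isolate at the end. The difference lies in how the near part is split. The paper inserts the averaged quantities $g_{\rho_k}(\ovy)$ and $g_{\rho_{k+1}}(\ovx)$ and breaks $|Tu(\ovy)-Tu(\ovx)|$ into three pieces $I_{1,k},I_{2,k},I_{3,k}$; the outer two are handled by \cref{L:TugLpBnd}, and the cross term $I_{2,k}$ by the inclusion~\eqref{E:FsubE} (which is precisely your monotonicity observation $(\eta_0\rho_m)^{\theta_\bnd(\ovx)}\ge(\eta_0\rho_m)^{\theta_\bnd(\ovy)}$) together with \cref{L:guMeansDiff}. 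Your route---pick one corkscrew point $\vx_m$ at the vertex with the smaller $\theta_\bnd$ and apply \cref{L:gxBarResult} twice---is more direct and bypasses the intermediate averages; the paper's version, in exchange, keeps all constants packaged through \cref{L:TugLpBnd} and \cref{L:supGBnd}. Both collapse to the same convergent series $\sum_m\eta_1^{m(t-\omega-\beta p)}$. (Two small slips that do not affect the argument: your definition $\delta_j=3C_\bnd\rho_j=\eta_1^j\delta_\bnd$ is inconsistent---the paper takes $\rho_j=\eta_1^j\rho_\bnd$ so that $\delta_j=3C_\bnd\rho_j=\eta_1^j\delta_\bnd$; and the precise constraint you need on the H\"older weight is $\sigma^{p-1}\eta_1^{\beta p}<1$ rather than $\sigma\eta_1^{t-\omega}<1$, though Minkowski's inequality in $L^p(\bnd'')$, as in the proof of \cref{L:TugDiffBndG}, avoids the weight altogether.)
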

\begin{remark}
The bound for $\|Tu\|_{L^p(\bnd'')}$ is provided in~\eqref{E:TuLpBnd}, and the bound for $|Tu|_{W^{\beta,p}(\bnd'')}$ is the sum of the two bounds in~\eqref{E:WapTraceBnd2} and~\eqref{E:WapTraceBnd3}.
\end{remark}
\begin{proof}
We need to verify
\begin{equation}\label{E:TuGoal}
    \slint{\bnd''}\ns|Tu(\ovx)|^p\dd\haus^t(\ovx)<\infty
    \nd
    \slint{\bnd''}\slint{\bnd''}\ns
    \frac{|Tu(\ov{\vy})-Tu(\ov{\vx})|^p}{\|\ov{\vy}-\ov{\vx}\|^{t+p\beta}}
    \dd\haus^t(\ov{\vy})\dd\haus^t(\ov{\vx})<\infty.
\end{equation}
 Our arguments are similar to those used in~\cite{MayGuyFen:18a}. Recall that $\eta_1:=\min\{\frac{1}{2},\eta_0\}$. Put $\lambda_j:=\eta_1^{2j}\lambda_0$, for $j\in\whls$, and $\rho:=\frac{\delta}{3C_\bnd}$. Define $\{\delta_j\}_{j=1}^\infty\subset(0,\delta_\bnd]$ and $\{\rho_j\}_{j=1}^\infty\subset(0,\rho_\bnd]$ as in the proofs for above lemmas.

First, we argue that $Tu\in L^p(\bnd'')$. Let $k\in\whls$ be given. By \cref{L:TugLpBnd}, with $\eta=\eta_1$ and $\lambda=\lambda_0$, we have
\[
    \|Tu\|_{L^p(\bnd'')}
    \le
    \|g_{\rho_k}\|_{L^p(\bnd'')}
        +\|Tu-g_{\rho_k}\|_{L^p(\bnd'')}
    \le
    \|g_{\rho_k}\|_{L^p(\bnd'')}+C_1\delta_{k}^\frac{t-\omega}{p}
        |u|_{\class{N}^{s(\cdot),p}\lp\bll_{2\delta_{k}}(\bnd'')\rp}.
\]
We need to produce a bound for $\|g_{\rho_k}\|_{L^p(\bnd)}$.

For each $k\in\whls$, we see that
\[
    \vx\in F_{\lambda_0,\rho_k}(\ov{\vx})
    \Longrightarrow
    d_{\partial\dom}(\vx)\ge\lp\lambda_0\|\vx-\ov{\vx}\|_{\ren}\rp^{\theta_\bnd(\ovx)}
        >(\lambda_0\eta_0\rho_k)^{\theta_\bnd(\ovx)}.
\]
Hence $|\Psi(\vx)|\ge(\lambda_1\rho_k)^{n\theta_\bnd(\ovx)}$.
We also observe that
\begin{align*}
    \ovx\in\bnd''\subseteq\bnd,\:\;\vx\in F_{\lambda_0,\rho_k}(\ovx),\text{ and }\vy\in\Phi(\ovx)
    &\Longrightarrow
    \vx\in\bll_{\rho_k}(\bnd'')\text{ and }\vy\in\bll_{\frac{1}{6}d_{\partial\dom}(\vx)}(\vx)\\
    &\Longrightarrow
    \vy\in\bll_{2\rho_k}(\bnd'')\subseteq\bll_{2\delta_k}(\bnd'').
\end{align*}
Put $\ov{\theta}_{\bnd''}:=\sup_{\ovx\in\bnd''}|\theta_\bnd(\ovx)|<\infty$. Using Jensen's inequality, we conclude that
\begin{align*}
    \slint{\bnd''}|g_{\rho_k}(\ovx)|^p\dd\haus^t(\ovx)
    &\le
    \slint{\bnd''}\fslint{F_{\lambda_0,\rho_k}(\ovx)}
        \lp\fslint{\Phi(\vx)}|u(\vy)|\dd\vy\rp^p\dd\vx\dd\haus^t(\ovx)\\
    &\le
    (\lambda_1\rho_k)^{-np\ov{\theta}_{\bnd''}}\haus^t(\bnd'')
        \|u\|^p_{L^1\lp\bll_{2\delta_k}(\bnd'')\rp}<\infty.
\end{align*}
Thus
\begin{equation}\label{E:TuLpBnd}
    \|Tu\|_{L^p(\bnd'')}
    \le
    (\lambda_1\rho_k)^{-n\ov{\theta}_{\bnd''}}\haus^t(\bnd'')^\frac{1}{p}
        \|u\|_{L^1\lp\bll_{2\delta_k}(\bnd'')\rp}
    +C_1\delta_k^\frac{t-\omega}{p}|u|_{\class{N}^{s(\cdot),p}\lp\bll_{2\delta_k}(\bnd'')\rp}<\infty.
\end{equation}

We turn to establishing the second part of~\eqref{E:TuGoal}. For each $\rho>0$, define $\bnd''_\rho:\bnd''\twoheadrightarrow\bnd$ by
\[
    \bnd''_\rho(\ovx):=\bnd''\cap\bll_\rho(\ovx)\bs\bll_{\eta_0\rho}(\ovx).
\]
We also introduce $\bnd''_{1,\rho},\bnd''_{2,\rho}:\bnd\twoheadrightarrow\bnd$ given by
\[
    \bnd''_{1,\rho}(\ovx):=\lc\ovy\in\bnd''_\rho(\ovx):\theta_\bnd(\ovx)\le\theta_\bnd(\ovy)\rc
    \nd
    \bnd''_{2,\rho}(\ovx):=\bnd''_\rho\bs\bnd''_{1,\rho}(\ovx).
\]

With $k\in\nats$, let $\ovx\in\bnd''$, $\ovy\in\bnd''_{1,\rho_k}(\ovx)$, and $\vx\in F_{\lambda_0,\rho_{k+2}}(\ovx)$ be given. Clearly, $\eta_1\lp1+\eta_1^2\rp<1$, so
\[
    \|\ovy-\vx\|_{\ren}
    \le\|\ovy-\ovx\|_{\ren}+\|\ovx-\vx\|_{\ren}
    <
    \rho_k+\rho_{k+2}\le\eta_1^k\lp1+\eta_1^2\rp\rho<\rho_{k-1}.
\]
Also,
\[
    \|\ovy-\vx\|_{\ren}\ge\|\ovy-\ovx\|_{\ren}-\|\ovx-\vx\|_{\ren}
    >\rho_{k+1}-\rho_{k+2}=\lp1-\eta_1\rp\rho_{k+1}\ge\rho_{k+2}.
\]
Furthermore, since $\ovy\in\bnd''_{1,\rho_k}(\ovx)$, we have $\theta_\bnd(\ovy)\ge\theta_\bnd(\ovx)$, and thus
\[
    d_{\partial\dom}(\vx)
    \ge
    \lp\lambda_0\|\ovx-\vx\|_{\ren}\rp^{\theta_\bnd(\ovx)}
    \ge
    \lp\lambda_0\rho_{k+3}\rp^{\theta_\bnd(\ovy)}
    >\lp\lambda_0\eta_1^4\|\ovy-\vx\|_{\ren}\rp^{\theta_\bnd(\ovy)}
    =\lp\lambda_2\|\ovy-\vx\|_{\ren}\rp^{\theta_\bnd(\ovy)}.
\]
Thus $\vx\in Q^{\theta_\bnd(\ovy)}_{\lambda_2,\rho_{k-1}}\bs\bll_{\rho_{k+2}}(\ovy)$. As $k\in\nats$ and $\ovx\in F_{\lambda_0,\rho_{k+2}}(\ovx)$ were arbitrary, we conclude that
\begin{equation}\label{E:FsubE}
    F_{\lambda_0,\rho_{k+2}}(\ovx)
        \subseteq Q^{\theta_\bnd(\ovy)}_{\lambda_2,\rho_{k-1}}\bs\bll_{\rho_{k+2}}(\ovy),
    \frl
    k\in\nats\text{ and }\ovy\in\bnd''_{1,\rho_k}(\ovx).
\end{equation}

We may write
\begin{multline*}
    \slint{\bnd''}\slint{\bnd''}\frac{|Tu(\ov{\vy})-Tu(\ov{\vx})|^p}
        {\|\ov{\vy}-\ov{\vx}\|_{\ren}^{t+\beta p}}\dd\haus^t(\ov{\vy})\dd\haus^t(\ov{\vx})\\
    =
    \sum_{\ell=1}^2\sum_{k=-\infty}^\infty\slint{\bnd''}
        \slint{\bnd''_{\ell,\rho_k}(\ov{\vx})}\frac{|Tu(\ov{\vy})-Tu(\ov{\vx})|^p}
        {\|\ov{\vy}-\ov{\vx}\|_{\ren}^{t+\beta p}}\dd\haus^t(\ov{\vy})\dd\haus^t(\ov{\vx}).
\end{multline*}

We focus on the integrals with $k\in\nats$ first. We notice that if $\ovy\in\bnd''_{2,\rho}(\ovx)$, then $\theta_\bnd(\ovy)<\theta_\bnd(\ovx)$ and $\eta_1\rho\le\|\ovy-\ovx\|_{\ren}<\rho$, so $\ovx\in\bnd''_{1,\rho}(\ovy)$. This and the Fubini-Tonelli theorem implies
\begin{multline*}
    \slint{\bnd''}\slint{\bnd''_{2,\rho_k}(\ov{\vx})}\frac{|Tu(\ov{\vy})-Tu(\ov{\vx})|^p}
        {\|\ov{\vy}-\ov{\vx}\|_{\ren}^{t+\beta p}}\dd\haus^t(\ov{\vy})\dd\haus^t(\ov{\vx})\\
    \le
    \slint{\bnd''}\slint{\bnd''_{1,\rho_k}(\ov{\vx})}\frac{|Tu(\ov{\vy})-Tu(\ov{\vx})|^p}
        {\|\ov{\vy}-\ov{\vx}\|_{\ren}^{t+\beta p}}\dd\haus^t(\ov{\vy})\dd\haus^t(\ov{\vx}),
\end{multline*}
where we have relabeled $\ovx\leftrightarrow\ovy$ in the last integral. Thus
\begin{align}
\nonumber
    &\sum_{k=1}^\infty
        \slint{\bnd''}\slint{\bnd''_{\rho_k}(\ovx)}\nss\frac{|Tu(\ov{\vy})-Tu(\ov{\vx})|^p}
        {\|\ov{\vy}-\ov{\vx}\|_{\ren}^{t+\beta p}}\dd\haus^t(\ov{\vy})\dd\haus^t(\ov{\vx})\\
\nonumber
    &\qqquad=
    2\sum_{k=1}^\infty\slint{\bnd''}
        \slint{\bnd''_{1,\rho_k}(\ov{\vx})}\nss\frac{|Tu(\ov{\vy})-Tu(\ov{\vx})|^p}
        {\|\ov{\vy}-\ov{\vx}\|_{\ren}^{t+\beta p}}\dd\haus^t(\ov{\vy})\dd\haus^t(\ov{\vx})\\
\label{E:WapTraceBnd1}
    &\qqquad\le
    c\sum_{k=1}^\infty\underbrace{\slint{\bnd''}
        \slint{\bnd''_{1,\rho_k}(\ovx)}\nss\frac{|Tu(\ov{\vy})-g_{\rho_k}(\ovy)|^p}
        {\|\ov{\vy}-\ov{\vx}\|_{\ren}^{t+\beta p}}\dd\haus^t(\ovy)\dd\haus^t(\ovx)}_{=:I_{1,k}}\\
\nonumber
    &\qqqquad\qqquad+
    c\sum_{k=1}^\infty\underbrace{\slint{\bnd''}
        \slint{\bnd''_{1,\rho_k}(\ovx)}\nss\frac{|g_{\rho_{k+1}}(\ovx)-g_{\rho_k}(\ovy)|^p}
        {\|\ov{\vy}-\ov{\vx}\|_{\ren}^{t+\beta p}}\dd\haus^t(\ovy)\dd\haus^t(\ovx)}_{=:I_{2,k}}\\
\nonumber
    &\qqqquad\qqquad+
    c\sum_{k=1}^\infty\underbrace{\slint{\bnd''}
        \slint{\bnd''_{1,\rho_k}(\ovx)}\nss\frac{|Tu(\ov{\vx})-g_{\rho_{k+1}}(\ovx)|^p}
        {\|\ov{\vy}-\ov{\vx}\|_{\ren}^{t+\beta p}}\dd\haus^t(\ovy)\dd\haus^t(\ovx)}_{=:I_{3,k}}.
\end{align}

We first establish bounds for the  integrals $I_{1,k}$ and $I_{3,k}$, which can be handled in similar manners. For $I_{1,k}$, we have $\|\ovy-\ovx\|_{\ren}\ge\rho_{k+1}$. With the Fubini-Tonelli theorem and the upper Ahlfors-regularity, we obtain
\begin{align*}
    I_{1,k}
    \le&
    \rho_{k+1}^{-t-\beta p}\slint{\bnd''}\slint{\bnd_{2,k}(\ovy)}|Tu(\ov{\vy})-g_{\rho_k}(\ovy)|^p
        \dd\haus^t(\ovx)\dd\haus^t(\ovy)\\
    \le&
    A_\bnd\eta_1^{-t-\beta p}\rho_k^{-\beta p}\|Tu-g_{\rho_k}\|_{L^p(\bnd'')}^p\\
    \le&
    cC_1(\lambda_0,\eta_1,\omega)\eta_1^{-t-\beta p}\rho_k^{(t-\omega)-\beta p}
        |u|^p_{\class{N}^{s(\cdot),p}(\bll_{2\delta_k}(\bnd''))}.
\end{align*}
For the last line, we applied \cref{L:TugLpBnd}, with $\lambda=\lambda_0$ and $\delta=\delta_k$. For $I_{3,k}$, the same argument produces
\[
    I_{3,k}
    \le
    cC_1(\lambda_0,\eta_1,\omega)\eta_1^{-\omega-\beta p}\rho_k^{(t-\omega)-\beta p}
        |u|^p_{\class{N}^{s(\cdot),p}(\bll_{2\delta_{k+1}}(\bnd''))}.
\]

Turning to $I_{2,k}$, we again have $\|\ovy-\ovx\|_{\ren}\ge\rho_{k+1}$. This and Jensen's inequality produces the bound
\begin{equation}\label{E:I2kBnd1}
    I_{2,k}
    \le
    \rho_{k+1}^{-t-\beta p}\slint{\bnd''}\slint{\bnd_{1,\rho_k}(\ovx)}
    \fslint{F_{\lambda_0,\rho_k}(\ovy)}\fslint{F_{\lambda_0,\rho_{k+2}}(\ovx)}
        \ns|g(\vy)-g(\vx)|^p\dd\vx\dd\vy\dd\haus^t(\ovy)\dd\haus^t(\ovx).
\end{equation}
For each $\ovx\in\bnd''$, we find $\ovy\in\bnd_{1,\rho_k}(\ovx)$. Thus
\[
    F_{\lambda_0,\rho_{k+2}}(\ovx)
        \subseteq Q_{\lambda_2,\rho_{k-1}}\bs\bll_{\eta_1^3\rho_{k-1}}(\ovy).
\]
Since $\lambda_2\le\lambda_0$, from its definition, we also have
\[
    F_{\lambda_0,\rho_{k+1}}(\ovy)
    =Q_{\lambda_2,\eta_1^2\rho_{k-1}}\bs\bll_{\eta_1^3\rho_{k-1}}(\ovy)
        \subseteq Q_{\lambda_2,\rho_{k-1}}\bs\bll_{\eta_1^3\rho_{k-1}}(\ovy).
\]
Lemma~\ref{L:guMeansDiff}, with $\lambda=\lambda_2$, $\eta=\eta_1^3$, and $\rho=\rho_{k-1}$, implies
\[]
    |g(\vy)-g(\vx)|
    \le
    \vep^{-n-p}_{\lambda_3}\delta_{k-1}^{\ul{\alpha}_{\delta_{k-1}}}
        \nu^{s(\cdot),p}(\bll_{\delta_{k-1}}(\ovy))
    \le
    c\vep_{\lambda_3}^{-n-p}\eta_1^{\omega}\rho_k^{-\omega}G_{\delta_{k-1}}(\ovy).
\]
Returning to~\eqref{E:I2kBnd1}, we apply the Fubini-Tonelli theorem and \cref{L:supGBnd} and use the upper Ahlfors-regularity to produce
\begin{align*}
    I_{2,k}
    \le&
    c\vep_{\lambda_3}^{-n-p}\eta_1^{-t+\omega-\beta p}\rho_k^{-\omega-\beta p}
        \slint{\bnd''}G_{\delta_{k-1}}(\ovy)\dd\haus^t(\ovy)\\
    \le&
    c\vep_{\lambda_3}^{-n-p}\eta_1^{-2t+\omega-\beta p}\rho_k^{(t-\omega)-\beta p}
        |u|^p_{\class{N}^{s(\cdot),p}(\bll_{2\delta_{k-1}}(\bnd''))}.
\end{align*}

Using the bounds for $I_{1,k}$, $I_{2,k}$, and $I_{3,k}$ in~\eqref{E:WapTraceBnd1}, we conclude that
\begin{align}
\nonumber
    &\sum_{k=1}^\infty\slint{\bnd''}\slint{\bnd''_{\rho_k}(\ovx)}
        \frac{|Tu(\ovy)-Tu(\ovx)|^p}{\|\ovy-\ovx\|^{t+\beta p}}
        \dd\haus^t(\ovy)\dd\haus^t(\ovx)\\
\nonumber
    &\qqqquad\le
    cC_1\vep_{\lambda_3}^{-n-p}\eta_1^{-2t+\omega-\beta p}\sum_{k=1}^\infty\rho_k^{(t-\omega)-\beta p}
        |u|^p_{\class{N}^{s(\cdot),p}(\bll_{2\delta_{k-1}}(\bnd''))}\\
\label{E:WapTraceBnd2}
    &\qqqquad\le
    C'_2|u|^p_{\class{N}^{s(\cdot),p}(\bll_{2\delta_\bnd}(\bnd''))},
\end{align}
since $\beta p<t-\omega$. Here, we have put
\begin{align*}
    C'_2=C'_2(\lambda_0,\eta_1,\delta_\bnd,\omega,\beta)
    :=&
    cC_1(\lambda_0,\eta_1,\omega)\delta_\bnd^{(t-\omega)-\beta p}
        \eta_1^{-t-2\beta p}\lp1-\eta_1^{(t-\omega)-\beta p}\rp^{-1}\\
    \le&
    cC_1(\lambda_0,\eta_1,\omega)\eta_1^{-3t+2\omega}\lp1-\eta_1^{(t-\omega)-\beta p}\rp^{-1}.
\end{align*}

We now turn to establishing a bound for
\[
    \sum_{k=-\infty}^1
    \slint{\bnd''}\lint{\bnd''_{\rho_k}(\ovx)}\frac{|Tu(\ov{\vy})-Tu(\ov{\vx})|^p}
        {\|\ov{\vy}-\ov{\vx}\|_{\ren}^{t+\beta p}}\dd\haus^t(\ov{\vy})\dd\haus^t(\ov{\vx}).
\]
In this case, we have $\|\ovy-\ovx\|_{\ren}\ge\rho_{k+1}$, for each $\ovy\in\bnd_{\rho_k}(\ovx)$ and $k\le 1$. Consequently,
\begin{align}
\nonumber
    &\sum_{k=-\infty}^1\slint{\bnd''}
    \slint{\bnd''_{\rho_k}(\ovx)}\frac{|Tu(\ov{\vy})-Tu(\ov{\vx})|^p}
        {\|\ov{\vy}-\ov{\vx}\|_{\ren}^{t+\beta p}}\dd\haus^t(\ov{\vy})\dd\haus^t(\ov{\vx})\\
\nonumber
    &\qqqquad\le
    \sum_{k=-\infty}^1\rho_{k+1}^{-t-\beta p}
    \slint{\bnd''}\slint{\bnd''_{\rho_k}(\ovx)}|Tu(\ov{\vy})-Tu(\ov{\vx})|^p
        \dd\haus^t(\ov{\vy})\dd\haus^t(\ov{\vx})\\
\nonumber
    &\qqqquad\le
    c\sum_{k=-\infty}^1\rho_{k+1}^{-t-\beta p}
    \slint{\bnd''}\haus^t(\bnd''_{\rho_k}(\ovx))|Tu(\ovx)|^p\dd\haus^t(\ov{\vx})\\
\nonumber
    &\qqqquad\le
    cA_\bnd\eta_1^{-t-\beta p}\|Tu\|^p_{L^p(\bnd'')}
        \sum_{k=-\infty}^1\rho_k^{-\beta p}\\
\label{E:WapTraceBnd3}
    &\qqqquad
    =c\delta_\bnd^{-\beta p}\eta_1^{-t-2\beta p}
        \|Tu\|^p_{L^p(\bnd'')}\sum_{k=0}^\infty\eta_1^{k\beta p}
    =
    C''_2\|Tu\|^p_{L^p(\bnd)},
\end{align}
with
\[
    C''_2=C_2''(\lambda_0,\eta_1,\delta_\bnd,\beta)
    :=c\delta_\bnd^{-\beta p}\eta_1^{-t-2\beta p}\lp1-\eta_1^{\beta p}\rp^{-1}.
\]
Recalling~\eqref{E:TuLpBnd} and~\eqref{E:WapTraceBnd2}, the result is proved.
\end{proof}

This section's second main result will be used to establish \cref{T:TraceProp}(b).
\begin{theorem}\label{T:LebProp}
Assume (H1), (H2), (H4$'$), and (H4$''$). There exists $\delta',\rho'>0$ such that $\partial\dom\cap\bll_{\rho'}(\ovx_0)\subseteq A_{\omega,\delta'}\cap\bnd_t$, with $\omega<t-n(\theta_\bnd(\ovx_0)-1)$.
Then,
\[
    \lim_{\rho\to0^+}\fslint{\dom_\rho(\ovx_0)}|Tu(\ovx_0)-u(\vx)|^p\dd\vx
    =0.
\]
\end{theorem}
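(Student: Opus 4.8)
\medskip
\noindent\textbf{Proof plan.}
The plan is to split $\dom_\rho(\ovx_0)$ into the nontangential piece $Q_{\lambda,\rho}(\ovx_0)$, for a fixed $0<\lambda<\lambda_0$, and the tangential remainder $\dom_\rho(\ovx_0)\bs Q_{\lambda,\rho}(\ovx_0)$. First I record the ingredients. Since $\partial\dom\cap\bll_{\rho'}(\ovx_0)\subseteq A_{\omega,\delta'}\cap\bnd_t\subseteq A_{\omega,0}\cap\bnd_t$ and $\omega<t$ (because $\theta_\bnd(\ovx_0)\ge1$), \cref{T:gGenResults} gives that $Tu(\ovx_0)=g(\ovx_0)$ is well defined, and \cref{C:LebPropTrace} gives $\fslint{Q_{\lambda,\rho}(\ovx_0)}|u(\vx)-Tu(\ovx_0)|^p\dd\vx\to0$; since $Q_{\lambda,\rho}(\ovx_0)\subseteq\dom_\rho(\ovx_0)$, this already disposes of the nontangential piece. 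Next, $\ovx_0\in\bnd_t$ gives $M_t(\ovx_0)<\infty$, hence $\nu^{s(\cdot),p}(\bll_\rho(\ovx_0))\le M_t(\ovx_0)\rho^t$ for $\rho\le\delta_\bnd$, while \cref{L:Hypotheses}(\ref{L:CorkscrewVol}) gives $|\dom_\rho(\ovx_0)|\ge c(\eta_0\lambda_0\rho)^{n\theta_\bnd(\ovx_0)}$. Finally, on all of $\dom_\rho(\ovx_0)$ I use $|u(\vx)-Tu(\ovx_0)|^p\le c|u(\vx)-g(\vx)|^p+c|g(\vx)-Tu(\ovx_0)|^p$, so it remains to estimate these two terms on the tangential remainder, where in addition $d_{\partial\dom}(\vx)\le(\lambda\|\ovx_0-\vx\|)^{\theta_\bnd(\ovx_0)}\le(\lambda\rho)^{\theta_\bnd(\ovx_0)}$.

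For $|u-g|$, since $\Phi(\vx)\subseteq\Psi(\vx)$ with $|\Psi(\vx)|/|\Phi(\vx)|=3^n$, $s\ge0$ and $d_{\partial\dom}(\vx)<1$ for $\rho$ small, there is the pointwise bound $|u(\vx)-g(\vx)|\le 3^n d_{\partial\dom}(\vx)^{s(\vx)}h(\vx)$ with $h(\vx):=\fslint{\Psi(\vx)}d_{\partial\dom}(\vx)^{-s(\vx)}|u(\vy)-u(\vx)|\dd\vy$, and $\slint{\bll_\rho(\ovx_0)\cap\dom}h(\vx)^p\dd\vx=\nu^{s(\cdot),p}(\bll_\rho(\ovx_0))$. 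Using $s(\vx)\ge\ul{s}_{\delta'}(\ovx_0)$ on $\dom_\rho(\ovx_0)$ (valid once $\rho\le\delta'$) together with $d_{\partial\dom}(\vx)<\rho<1$ and the two displayed bounds above, the contribution of $|u-g|^p$ to $\fslint{\dom_\rho(\ovx_0)}|u-Tu(\ovx_0)|^p$ is at most $cM_t(\ovx_0)\rho^{\,p\ul{s}_{\delta'}(\ovx_0)+t-n\theta_\bnd(\ovx_0)}$; a direct check of the two branches of $\alpha$ shows that $\ovx_0\in A_{\omega,\delta'}$ and $\omega<t-n(\theta_\bnd(\ovx_0)-1)$ force $p\,\ul{s}_{\delta'}(\ovx_0)>n\theta_\bnd(\ovx_0)-t$, so this exponent is positive and the term tends to $0$.

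The main work is $\fslint{\dom_\rho(\ovx_0)\bs Q_{\lambda,\rho}(\ovx_0)}|g(\vx)-Tu(\ovx_0)|^p\dd\vx$. For $\vx$ in the tangential remainder fix a nearest boundary point $\ovy=\ovy(\vx)\in\partial\dom$, so $d_{\partial\dom}(\vx)=\|\vx-\ovy\|$, which for $\rho$ small places $\vx\in Q^{\theta_\bnd(\ovy)}_{\eta_1}(\ovy)$ at scale comparable to $d_{\partial\dom}(\vx)$; moreover $\|\ovy-\ovx_0\|\le2\|\ovx_0-\vx\|\le2\rho$, and since $\vx$ is tangential one has $\|\vx-\ovx_0\|\gtrsim d_{\partial\dom}(\vx)^{1/\theta_\bnd(\ovx_0)}\gg d_{\partial\dom}(\vx)$, so $r:=\|\ovy-\ovx_0\|$ is comparable to $\|\vx-\ovx_0\|$ and dominates $d_{\partial\dom}(\vx)$, and (for $\rho$ small) $\ovy,\ovx_0\in\partial\dom\cap\bll_{\rho'}(\ovx_0)\subseteq A_{\omega,\delta'}\cap\bnd_t$. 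I then chain $|g(\vx)-Tu(\ovx_0)|\le|g(\vx)-Tu(\ovy)|+|Tu(\ovy)-Tu(\ovx_0)|$: the first summand is estimated by \cref{L:gxBarResult} at $\ovy$, and the second exactly as in the proof of \cref{T:uInWbp} --- joining, via (H1) and (H2), a corkscrew point at scale $\asymp r$ of whichever of $\ovy,\ovx_0$ carries the larger $\theta_\bnd$ (which is then nontangential to the other) to the approach sequences of both, and applying \cref{L:gxBarResult} twice. Since $r\gtrsim d_{\partial\dom}(\vx)$ the resulting estimate is $|g(\vx)-Tu(\ovx_0)|\le c\vep_{\lambda'}^{-(n+p)/p}[\,S_{\ovy}(\vx)+S_{\ovx_0}(\vx)\,]$, where $S_{\ovz}(\vx):=\sum_{\eta_1^j\delta_\bnd\lesssim r}(\eta_1^{-j\omega}\nu^{s(\cdot),p}(\bll_{\eta_1^j\delta_\bnd}(\ovz)))^{1/p}$. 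Because $t-\omega>0$ and $\ovx_0\in\bnd_t$, $S_{\ovx_0}(\vx)$ is geometric and bounded uniformly by $cM_t(\ovx_0)^{1/p}\rho^{(t-\omega)/p}$, so it contributes at most $cM_t(\ovx_0)\rho^{t-\omega}\to0$. For $\sum$ of $S_{\ovy(\vx)}(\vx)^p$ I decompose the tangential remainder into its dyadic $d_{\partial\dom}$-shells, cover each shell by balls of the corresponding size with bounded overlap and a count $\lesssim(\text{scale})^{-t}\rho^t$ (upper Ahlfors-regularity of $\bnd$), split $M_t$ at the base points as a coarse part $\sup_{r>2\rho}r^{-t}\nu^{s(\cdot),p}(\bll_r(\cdot))$ --- uniformly bounded by a constant depending only on $M_t(\ovx_0)$ and $\nu^{s(\cdot),p}(\dom;u)$, since these base points lie in $\bll_{2\rho}(\ovx_0)$ --- and a fine part $\sup_{0<r\le2\rho}r^{-t}\nu^{s(\cdot),p}(\bll_r(\cdot))$, and then assemble the shell contributions with the geometric weights provided by $t-\omega>0$, using \cref{L:supGBnd} and \cref{L:BoundedOverlap} to collapse the accumulated $\nu^{s(\cdot),p}$-mass (all living in $\bll_{c\rho}(\ovx_0)$) into multiples of $\nu^{s(\cdot),p}(\bll_{c\rho}(\ovx_0))\le M_t(\ovx_0)\rho^t$. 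The coarse part, after dividing by $|\dom_\rho(\ovx_0)|\ge c\rho^{n\theta_\bnd(\ovx_0)}$, contributes a multiple of $\rho^{\,t-\omega-n(\theta_\bnd(\ovx_0)-1)}\to0$, which is again where the hypothesis $\omega<t-n(\theta_\bnd(\ovx_0)-1)$ enters.

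The hard part is precisely this last assembly: making the shell decomposition interact with the scale-indexed $\nu^{s(\cdot),p}$-sums coming from \cref{L:gxBarResult} (a weighted Hölder inequality is needed to pull the $p$-th power inside, followed by interchanging the sums over covering pieces and over scales), and in particular controlling the fine quantity $\sup_{0<r\le2\rho}r^{-t}\nu^{s(\cdot),p}(\bll_r(\ovy(\vx)))$ integrated over $\vx$ --- a fractional-maximal-type estimate that must be run through the Ahlfors-regular coarea bound \cref{L:supGBnd} exactly as the analogous estimates are handled in \cite{MayGuyFen:18a}. Throughout, the two exponent conditions carry the argument: $\omega<t$ gives summability of every scale series, and $\omega<t-n(\theta_\bnd(\ovx_0)-1)$ gives the coarse/volume balance (used both in the $|u-g|$ bound and in the coarse part of the $|g-Tu(\ovx_0)|$ bound); no geometric input beyond (H1), (H2), (H4$'$), (H4$''$) is needed.
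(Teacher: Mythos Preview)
Your decomposition into $|u-g|$ and $|g-Tu(\ovx_0)|$ matches the paper, and your treatment of $|u-g|$ is correct (the exponent check $p\,\ul{s}_{\delta'}(\ovx_0)>n\theta_\bnd(\ovx_0)-t$ does follow from the hypothesis, after tracking the two branches of $\alpha$). The divergence is in how you handle $|g(\vx)-Tu(\ovx_0)|$, and there your ``fine part'' has a genuine gap.

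The difficulty is this: your pointwise series $S_{\ovy(\vx)}$ contains terms $\nu^{s(\cdot),p}(\bll_{\delta_\ell}(\ovy(\vx)))$ at scales $\delta_\ell$ arbitrarily smaller than the scale $\rho_j$ of the $d_{\partial\dom}$-shell containing $\vx$. As $\vx$ ranges over a covering piece of that shell, $\ovy(\vx)$ varies over a boundary patch of diameter $\sim\rho_j\gg\delta_\ell$, so you cannot replace $\ovy(\vx)$ by a fixed center without blowing up the ball radius. And \cref{L:supGBnd} takes as input a \emph{boundary integral} $\int_{\bnd''}G_\delta\,\dd\haus^t$, which a pointwise sum over $\vx\in\dom$ does not produce; there is no coarea formula here to convert one into the other at this level of boundary irregularity. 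Your appeal to a ``fractional-maximal-type estimate\ldots exactly as in~\cite{MayGuyFen:18a}'' is precisely the missing step, and it is not available pointwise. A further warning sign is that your argument never uses (H4$''$), which is essential.

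The paper closes this gap by \emph{inserting an artificial average over a boundary patch}: for each shell piece $U_{j,k}$ it writes
\[
\slint{U_{j,k}}|g(\vx)-Tu(\ovx_0)|^p\dd\vx
\le
\frac{|U_{j,k}|}{\haus^t(\bnd'_{j,k})}\slint{\bnd'_{j,k}}\fslint{U_{j,k}}|g(\vx)-Tu(\ovx)|^p\dd\vx\,\dd\haus^t(\ovx)
+|U_{j,k}|\fslint{\bnd'_{j,k}}|Tu(\ovx)-Tu(\ovx_0)|^p\dd\haus^t(\ovx),
\]
with $\bnd'_{j,k}=\partial\dom\cap\bll_{\rho_j}(\ovy_{j,k})$. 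The first term is now a genuine boundary integral, so \cref{L:TugDiffBndG} followed by \cref{L:supGBnd} applies; (H4$''$) enters through $\haus^t(\bnd'_{j,k})\ge c\rho_j^t$. The second term cannot be handled by chaining \cref{L:gxBarResult} ``twice'' as you propose (that would reproduce the same fine-scale problem at $\ovx$); instead the paper first proves $Tu\in W^{\beta,p}$ on $\bnd\cap\bll_{\rho'}(\ovx_0)$ via \cref{T:uInWbp}, with $\beta>n(\theta_\bnd(\ovx_0)-1)/p$, and then invokes \cref{C:LebPoints} (which again uses (H4$''$)) to make $\rho^{-n(\theta_\bnd(\ovx_0)-1)}\fslint{\bnd\cap\bll_{2\rho}(\ovx_0)}|Tu(\ovx)-Tu(\ovx_0)|^p\dd\haus^t(\ovx)\to 0$. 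That $W^{\beta,p}$ detour is not optional---it is what replaces the pointwise control of $M_t(\ovy(\vx))$ you are missing.
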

\begin{remark}\label{R:TraceLebProp}
\begin{enumerate}[(a)]
    \item The assumption $\partial\dom\cap\bll_{\rho'}(\ovx_0)\subseteq A_{\omega,\delta'}\cap\bnd_t$ requires $\bnd\cap\bll_\rho(\ovx_0)=\partial\dom\cap\bll_\rho(\ovx_0)$, for all $0<\rho\le\rho'$.
    \item\label{R:uIntegrability} The result implies there exists $\rho_0>0$ such that
\begin{align*}
    &\slint{\dom_{\rho_0}(\ovx_0)}|u(\vx)|^p\dd\vx
    \le
    c\slint{\dom_{\rho_0}(\ovx_0)}|Tu(\ovx_0)-u(\vx)|^p\dd\vx+c\rho_0^n|Tu(\ovx_0)|^p<\infty\\
    \Longrightarrow&
    u\in L^p(\dom_{\rho_0}(\ovx_0)).
\end{align*}
    Recall that $u\in\class{N}^{s(\cdot),p}(\dom)$ implies $u\in L^p_{\loc}(\dom)$. Since $Tu$ need not be bounded, however, we cannot conclude that $u\in L^p(\dom)$.
\end{enumerate}
\end{remark}
\begin{proof}
Put $\theta_0:=\theta_\bnd(\ovx_0)$. We may write
\begin{equation}\label{E:MainIntegralDecomp}
    \fslint{\dom_\rho(\ovx_0)}|u(\vx)-Tu(\ovx_0)|^p\dd\vx
    \le
    c\fslint{\dom_{\rho}(\ovx_0)}|u(\vx)-g(\vx)|^p\dd\vx+
    \fslint{\dom_\rho(\ovx_0)}|Tu(\vx_0)-g(\vx)|^p\dd\vx.
\end{equation}

For the first integral, the same argument used for the proof of \cref{C:LebPropTrace} can be used to show
\[
    \lim_{\rho\to0^+}\fslint{\dom_{\rho}(\ovx_0)}|u(\vx)-g(\vx)|^p\dd\vx
    =0.
\]
For the last integral in~\eqref{E:MainIntegralDecomp}, we need to introduce several additional definitions. Since $\omega<t-n(\theta_0-1)$, we may select $n(\theta_0-1)/p<\beta<(t-\omega)/p$. By \cref{T:uInWbp}, we find $Tu\in W^{\beta,p}(\bnd\cap\bll_{\rho'}(\ovx_0))$. \cref{C:LebPoints} implies
\begin{multline}\label{E:LebPntTu}
    \lim_{\rho\to0^+}\rho^{-n(\theta_0-1)}
        \fslint{\bnd\cap\bll_{\rho}(\ovx_0)}|Tu(\ovx)-Tu(\ovx_0)|^p\dd\haus^t(\ovx)\\
    \le\lim_{\rho\to0^+}\rho^{-p\beta}
        \fslint{\bnd\cap\bll_{\rho}(\ovx_0)}|Tu(\ovx)-Tu(\ovx_0)|^p\dd\haus^t(\ovx)=0.
\end{multline}
 With $\Lambda:=30C_\bnd\eta_1^{-1}$, and let $0<\rho_0\le\frac{1}{4}\Lambda^{-1}\rho'$ be given. For each $j\in\whls$, define $\rho_j:=2^{-j}\rho_0$ and
\begin{equation}\label{E:BiBoundedOverlap}
    E_j:=\lc\vx\in\dom_\rho(\ovx_0):\rho_{j+1}\le d_{\partial\dom}(\vx)<\rho_j\rc.
\end{equation}
We observe that $\dom_{\rho_0}(\ovx_0)=\bigcup_{j=0}^\infty E_j$. Define $r:\dom_{\rho_0}(\ovx_0)\to(0,\rho_2)$ by $r:=\frac{1}{4}d_{\partial\dom_{\rho_0}(\ovx_0)}(\vx)>0$. Besicovitch's covering theorem provides countable sets $I$ and $\{\vx_i\}_{i\in I_j}\subseteq D$ such that
\[
    \dom_\rho(\ovx_0)=\bigcup_{i\in I}B_i
    \nd
    \sup_{\vx\in\re}\sum_{i\in I}\chi_{B_i}(\vx)\le c(n).
\]
Here, we have put $B_i:=\bll_{r_i}(\ovx_i)$ and $r_i:=r(\vx_i)$, for $i\in I$. For each $j\in\whls$, define
\[
    I_j:=\lc i\in I:\vx_i\in E_j\rc.
\]
For each $j\in\nats$ and $i\in I_j$, we have $d_{\partial\dom}(\vx_i)<\rho_j$ and $\|\vx_i-\ovx_0\|_{\ren}<\rho_0$, so we may select $\ovx_i\in{\partial\dom}\cap\bll_{\rho_0}(\ovx_0)$ such that $\|\ovx_i-\vx_i\|_{\ren}<2d_{\partial\dom}(\vx_i)<\rho_{j-1}$. Using \cref{L:BoundedOverlap}, we may select a countable, in fact finite, set $K_j\subset\nats$ and $\lc\ovy_{j,k}\rc_{k\in K_j}\subseteq{\partial\dom}\cap\bll_{\rho_0}(\ovx_0)$ such that
\begin{equation}\label{E:B2RjBoundedOverlap}
    \partial\dom\cap\bll_{\rho_0}(\ovx_0)\subseteq\bigcup_{k\in K_j}\bnd'_{j,k}
        \subseteq\bnd\cap\bll_{2\rho_0}(\ovx_0)
    \nd
    \sup_{x\in\ren}\sum_{k\in K_j}\chi_{\bll_{2\Lambda\rho_j}(\ovy_{j,k})}(\vx)\le c(n).
\end{equation}
Here $\bnd'_{j,k}:=\partial\dom\cap\bll_{\rho_j}(\ovy_{j,k})$. Note that $\bigcup_{k\in K_j}\bnd'_{j,k}\subseteq\bnd\cap\bll_{2\rho_0}(\ovx_0)\subseteq\bnd\cap\bll_{\rho'}(\ovx_0)$.

Given $j\in\nats$ and $i\in I_j$, there exists a unique smallest $k\in K_j$ such that $\ovx_i\in\bnd_{j,k}$. Thus, defining
\[
    I_{j,k}:=\lc i\in I_j:\ovx_i\in\bnd'_{j,k}\bs\bigcup_{k'<k;k'\in K_j}\bnd'_{j,k'}\rc,
\]
we find $I_j:=\bigcup_{k\in K_j}I_{j,k}$. Set $U_{j,k}:=\bigcup_{i\in I_{j,k}}B_i$. Then $\{U_{j,k}\}_{k\in K_j}$ has bounded overlap. Indeed, since for each $i\in I_j$, there is only one $k\in K_j$ such that $i\in I_{j,k}$, we deduce that
\[
    \sup_{\vx\in\ren}\sum_{k\in K_j}\chi_{U_{j,k}}(\vx)
    \le
    \sup_{\vx\in\ren}\sum_{k\in K_j}\sum_{i\in I_{j,k}}\chi_{B_i}(\vx)
    =\sup_{\vx\in\ren}\sum_{i\in I_j}\chi_{B_i}(\vx)\le c(n).
\]
With $k\in K_j$ and $i\in I_{j,k}$, let $\vx\in U_{j,k}$ and $\ovy\in\bnd_{j,k}$ be given. Then, there exists an $i\in I_{j,k}$ such that $\vx\in B_i$. We see that
\[
    d_{\partial\dom}(\vx_i)<d_{\partial\dom}(\vx)+r_i
    <
    d_{\partial\dom}(\vx)+\smfrac{1}{4}d_{\partial\dom}(\vx_i)
    \Longrightarrow
    d_{\partial\dom}(\vx_i)<\smfrac{4}{3}d_{\partial\dom}(\vx)
\]
and
\begin{align*}
    \|\ovx-\vx\|
    \le&
    \|\ovx-\ovx_i\|_{\ren}
        +\|\ovx_i-\vx_i\|_{\ren}+\|\vx_i-\vx\|_{\ren}
    <
    2\rho_j+\rho_{j-1}+r_i\\
    <&
    5\rho_j\le 10d_{\partial\dom}(\vx_i)<15d_{\partial\dom}(\vx).
\end{align*}
Put $\lambda':=\frac{1}{15}\lambda_0$. We conclude that
\begin{equation}\label{E:BriSubsetQ}
    B_i\subseteq U_{j,k}
    \subseteq Q^1_{\frac{1}{15},5\rho_j}(\ovx)
    \subseteq Q_{\lambda',5\rho_j}(\ovx),
    \frl\ovx\in\bnd'_{j,k}.
\end{equation}

Now, we are prepared to bound the second integral in~\eqref{E:MainIntegralDecomp}. Since $\{U_{k,j}\}_{j\in\nats,k\in K_j}$ is a cover for $\dom_\rho$, we have
\begin{align}
\nonumber
    &\fslint{\dom_\rho(\ovx_0)}|g(\vx)-Tu(\ovx_0)|^p\dd\vx
    \le
    \frac{1}{|\dom_\rho(\ovx_0)|}\sum_{j=1}^\infty\sum_{k\in K_j}
        \slint{U_{j,k}}|g(\vx)-Tu(\ovx_0)|^p\dd\vx\\
\nonumber
    &\quad\le
    \frac{1}{|\dom_\rho(\ovx_0)|}\sum_{j=1}^\infty\sum_{k\in K_j}
        \frac{1}{|\bnd'_{j,k}|}\slint{\bnd_{j,k}}\slint{U_{j,k}}|g(\vx)-Tu(\ovx_0)|^p\dd\vx\dd\haus^t(\ovx)\\
\nonumber
    &\quad\le
    \frac{1}{|\dom_\rho(\ovx_0)|}\sum_{j=1}^\infty\sum_{k\in K_j}
    \frac{|U_{j,k}|}{|\bnd'_{j,k}|}\slint{\bnd_{j,k}}\fslint{U_{j,k}}|g(\vx)-Tu(\ovy)|^p\dd\vx\dd\haus^t(\ovx)\\
\label{E:DIntegralBound1}
    &\qquad\qqquad+
    \frac{1}{|\dom_\rho(\ovx_0)|}\sum_{j=1}^\infty\sum_{k\in K_j}
    |U_{j,k}|\fslint{\bnd_{j,k}}|Tu(\ovx)-Tu(\ovx_0)|^p\dd\vx\dd\haus^t(\ovx).
\end{align}
Select $\ell_j\in\whls$ such that $\eta_1^{\ell_j+1}\rho_\bnd\le 5\rho_j<\eta_1^{\ell_j}\rho_\bnd$. For the first integral in~\eqref{E:DIntegralBound1}, since $U_{j,k}\subseteq Q_{\lambda',5\rho_j}(\ovx)$, for each $\ovx\in\bnd'_{j,k}$, \cref{L:TugDiffBndG} implies
\[
    \slint{\bnd'_{j,k}}\fslint{U_{j,k}}|g(\vx)-Tu(\ovx)|^p\dd\vx
        \dd\haus^t(\ovx)\\
    \le
    c\vep_{\lambda''}^{-n-p}\lp\sum_{\ell=\ell_j}^\infty\lp\eta_1^{-\omega}
        \slint{\bnd'_{j,k}}G_{\eta_1^\ell\delta_\bnd}(\ovx)\dd\haus^t(\ovx)\rp^\frac{1}{p}\rp^p,
\]
with $\lambda'':=\eta_1^2\lambda'$. Continuing with \cref{L:supGBnd}, we obtain
\begin{align*}
    \slint{\bnd'_{j,k}}\fslint{U_{j,k}}|g(\vx)-Tu(\ovx)|^p\dd\vx
        \dd\haus^t(\ovx)
    \le&
    c\vep_{\lambda''}^{-n-p}\lp\sum_{\ell=\ell_j}^\infty\lp\eta_1^{\ell(t-\omega)}
        \nu^{s(\cdot),p}(\bll_{2\eta_1^\ell\delta_\bnd}(\bnd'_{j,k}))\rp^\frac{1}{p}\rp^p\\
    \le&
    C_3'\eta_1^{(\ell_j+1)(t-\omega)}
        \nu^{s(\cdot),p}(\bll_{2\eta_1^{\ell_j}\delta_\bnd}(\bnd'_{j,k}))\\
    \le&
    C_3'2^{-j(t-\omega)}\rho_0^{t-\omega}
        \nu^{s(\cdot),p}(\bll_{\Lambda\rho_j}(\bnd'_{j,k})),
\end{align*}
Here $C_3':=c\vep_{\lambda''}^{-n-p}\eta_1^{\omega-t}\lp1-\eta_1^{t-\omega}\rp^{-1}$ and, recalling the definition of $\Lambda$ above,
\[
    \Lambda\rho_j=30C_\bnd\eta_1^{-1}\rho_j
    \ge
    6C_\bnd\eta_1^{\ell_j}\rho_\bnd=2\eta_1^{\ell_j}\delta_\bnd
\]
The lower Ahlfor's regularity implies $\haus^t(\bnd'_{j,k})\ge A_\bnd^{-1}\rho_j^t=c2^{-jt}\rho_0^t$. Recalling~\eqref{E:BriSubsetQ} and the bounded overlap property in~\eqref{E:BiBoundedOverlap}, we find $|U_{j,k}|\le c\rho_j^n=c2^{-jn}\rho_0^n$.
Given $\vx\in\bll_{\Lambda\rho_j}(\bnd'_{j,k})$, we must have $\|\vx-\ovy_{j,k}\|_{\ren}\le(\Lambda+1)\rho_j\le2\Lambda\rho_j$, so $\bll_{\Lambda\rho_j}(\bnd'_{j,k})\subseteq\bll_{2\Lambda\rho_j}(\ovy_{j,k})\subseteq\bll_{4\Lambda\rho_0}(\ovx_0)$. Using the bounded overlap property for $\{\bll_{2\Lambda\rho_j}(\ovy_{j,k})\}_{k\in K_j}$, we deduce that
\begin{align*}
    &\sum_{k\in K_j}\frac{|U_{j,k}|}{\haus^t(\bnd'_{j,k})}\slint{\bnd'_{j,k}}
        \fslint{B_i}|g(\vx)-Tu(\ovx)|^p\dd\vx\dd\haus^t(\ovx)\\
    &\qqqquad\le
    cC_3'\lp2^{-j(t-\omega)}\rho_0^{t-\omega}\rp\lp2^{jt}\rho^{-t}\rp
        \sum_{k\in K_j}\lp|U_{j,k}|\nu^{s(\cdot),p}(\bll_{\Lambda\rho_j}(\bnd'_{j,k}))
            \rp\\
    &\qqqquad\le
    cC_3\lp2^{j\omega}\rho_0^{-\omega}\rp\lp2^{-jn}\rho^n\rp
        \sum_{k\in K_j}\nu^{s(\cdot),p}(\bll_{2\Lambda\rho_j}(\ovy_{j,k}))\\
    &\qqqquad\le
    cC_3'2^{-j(n-\omega)}\rho_0^{n-\omega}
        \nu^{s(\cdot),p}\lp\bigcup_{k\in K_j}\bll_{2\Lambda\rho_j}(\ovy_{j,k})\rp\\
    &\qqqquad\le
    cC_3'2^{-j(n-\omega)}\rho_0^{n-\omega}\nu^{s(\cdot),p}(\bll_{4\Lambda\rho_0}(\ovx_0))
\end{align*}
Also
\begin{align*}
    &\sum_{k\in K_j}|U_{j,k}|
        \fslint{\bnd'_{j,k}}|Tu(\ovx)-Tu(\ovx_0)|^p\dd\haus^t(\ovx)\\
    &\qqqquad\le
    c2^{jt}\rho_0^{-t}\sum_{k\in K_j}|U_{j,k}|\slint{\bnd'_{j,k}}|Tu(\ovx)-Tu(\ovx_0)|^p\dd\haus^t(\ovx)\\
    &\qqqquad\le
    c2^{-j(n-t)}\rho_0^{n-t}\slint{\bnd\cap\bll_{2\rho_0}(\ovx_0)}|Tu(\ovx)-Tu(\ovx_0)|^p\dd\haus^t(\ovx).
\end{align*}
Returning to~\eqref{E:DIntegralBound1}, we have established
\begin{align*}
    &\fslint{\dom_{\rho_0}(\ovx_0)}|g(\vx)-Tu(\ovx_0)|^p\dd\vx\\
    &\qquad\le
    \frac{c}{|\dom_{\rho_0}(\ovx_0)|}\lp C_3'\rho_0^{n-\omega}\nu^{s(\cdot),p}(\bll_{4\Lambda\rho_0}(\ovx_0))
        \sum_{j=1}^\infty2^{-j(n-\omega)}\rt\\
    &\qqqquad\lt+
        \rho_0^{n-t}\slint{\bnd\cap\bll_{2\rho}(\ovx_0)}|Tu(\ovx)-Tu(\ovx_0)|^p\dd\haus^t(\ovx)
        \sum_{j=1}^\infty 2^{-j(n-t)}\rp\\
    &\quad\le
    cC_3'\rho_0^{t-\omega-n(\theta_0-1)}
        \underbrace{\lp\rho_0^{-t}\nu^{s(\cdot),p}(\bll_{4\Lambda\rho_0}(\ovx_0))\rp}
            _{\le M_t(\ovx_0)<\infty}\\
    &\qqqquad+
        c\rho_0^{-n(\theta_0-1)}\fslint{\bnd\cap\bll_{2\rho_0}(\ovx_0)}|Tu(\ovx)-Tu(\ovx_0)|^p\dd\haus^t(\ovx).
\end{align*}
Recalling that $t-\omega>n(\theta_0-1)$ and~\eqref{E:LebPntTu}, we conclude that the upper bound above approaches vanishes as $\rho_0\to0^+$. Hence
\[
\lim_{\rho_0\to0^+}\fslint{\dom_{\rho_0}(\ovx_0)}|g(\vx)-Tu(\ovx_0)|^p\dd\vx=0,
\]
and the result is proved.
\end{proof}

\cref{T:TraceProp} follows from this section's main results.
\begin{proof}[Proof for \cref{T:TraceProp}]
Assumption (H3$''$) implies $\bnd=A_{\omega,\delta_\bnd}$, with $\omega=-\ul{\alpha}_\bnd<t$. Thus \cref{T:TraceProp}(a) follows from \cref{T:uInWbp}.

For \cref{T:TraceProp}(b), we assume that $\bnd=\partial\dom$, so $\partial\dom\cap\bll_{\rho_\bnd}(\ovx_0)\subseteq A_{\omega,\delta_\bnd}$. The result follows from \cref{T:LebProp}.
\end{proof}

\section{Appendix}\label{S:Appendix}

Here, we provide some details for the ``prickly'' snowflake $\dom\subseteq\re^2$ region presented in \cref{E:Domains}(\ref{E:Prickly}). The Koch snowflake curve can be identified as the attractor for a finte iterated function system (IFS), of similarity transforms, which facilitates many of the curves properties~\cite{Fal:03a}. Though the structure is similar, we require an infinite iterated function system (IIFS) to generate the curve $\bnd$ in \cref{E:Domains}(\ref{E:Prickly}). This makes the analysis of $\bnd$ substantially more difficult. One of the key ideas is that the images of certain compositions of the IIFS can be grouped together to mimic those produced by the IFS generating the Koch curve. Thi, in particular, allows us to establish $\haus^t(\bnd)>0$ and the Ahlfors-regularity with an argument similar to a standard one used for IFSs.

To organize the system and its compositions, we introduce several index sets. First, define $\set{I}':=\{3,4\}$, $\set{I}''_0:=\emptyset$, and $\set{I}''_j:=\{1,2\}^j$, for each $j\in\nats$.
We write $i'$ or $(i')$ for $\vei\in\set{I}'\times\set{I}''_0$. Define
\[
    \set{I}_j:=\set{I}'\times\set{I}''_j,\quad
    \set{I}'':=\bigcup_{j=0}^\infty\set{I}''_j,\nd \set{I}:=\set{I}'\times\set{I}''=\bigcup_{j=0}^\infty\set{I}_j.
\]
Our IIFS is $\class{F}:=\{\ve{f}_{\vei}\}_{\vei\in\set{I}}$, where each map $\ve{f}_{\vei}\in\cnt^\infty(\re^2;\re^2)$ is a contracting similarity, described below. To index the compositions of functions in $\class{F}$, we will use
\[
    \set{I}^*_j:=\set{I}^j=\underbrace{\set{I}\times\cdots\times\set{I}}_{j\text{ times}},
    \quad
    \set{I}^*_\infty:=\set{I}\times\set{I}\cdots,
    \nd
    \set{I}^*:=\bigcup_{j=1}^\infty\set{I}^*_j,
\]
so $\set{I}^*$ consists of all finite sequences of elements of $\set{I}$, and $\set{I}^*_\infty$ is the set of all infinite sequences. The lengths of $\vei''\in\set{I}''_j$, $\vei=(i',\vei'')\in\set{I}_j$, and $\vei^*=(\vei_1,\dots,\vei_{j^*})\in\set{I}^*_{j^*}$ are $|\vei''|:=j$, $|\vei|:=1+|\vei''|:=1+j$, and $|\vei^*|:=j^*$, respectively. We also define $\|\vei^*\|:=\sum_{k=1}^{|\vei^*|}|\vei_k|\ge|\vei^*|$. Given $\vei''\in\set{I}''$ and $0\le k\le|\vei''|$, we use $\vei''\big|_k$ for the truncation  of $\vei''$ to length $k$, so $\vei''\big|_k:=(i''_1,\dots,i''_k)$ and $\vei''\big|_0$ is the null vector. Similarly $\vei^*\big|_k:=(\vei_1,\dots,\vei_k)$, for $1\le k\le|\vei^*|$. The truncation for $\vei^*\in\set{I}^*$ is denoted similarly. We will use $\cdot\dplus\cdot$ to denote concatenation of two vectors. Given $\vei_1=(i_1',\vei_1'')\in\set{I}$

, so for example, if $\vei''_1\in\set{I}''_{j_1}$ and $\vei''_2\in\set{I}''_{j_2}$, then
\[
    \vei''_1\dplus\vei''_2
    =
    (i''_{1,1},\dots,i''_{1,j_1})\dplus(i''_{2,1},\dots,i''_{2,j_2})
    =(i''_{1,1},\dots,i''_{1,j_1},i''_{2,1},\dots,i''_{2,j_2})\in\set{I}''_{j_1+j_2}.
\]
If $\vei_1\in\set{I}$ and $\vei''_2\in\set{I}''$, then $\vei_1\dplus\vei''_2:=(i_1',\vei''_1\dplus\vei''_2)\in\set{I}_{|\vei''_1|+|\vei''_2|}$. Finally, we introduce a partial ordering on $\set{I}^*$ by writing $\vei_1^*=(\vei_{1,1},\dots,\vei_{1,j_1^*})\le(\vei_{2,1},\dots,\vei_{2,j_2^*})=\vei^*_2$ if
\[
    |\vei_1^*|=j_1^*\le j_2^*=|\vei_2^*|,\quad
    |\vei_{1,j_1^*}|\le|\vei_{2,j_1^*}|,\quad
    \vei^*_2\big|_{j_1^*-1}=\vei_1^*\big|_{j_1^*-1},
    \text{ and }\;
    \vei_{2,j_1^*}\big|_{|\vei_{1,j_1^*}|}=\vei_{1,j_1^*},
\]
so the first $(j_1^*-1)$ components of $\vei_1^*$ and $\vei_2^*$ are identical and $\vei_{1,j_1^*}$, the last component of $\vei^*_1$, is identical to the first $|\vei_{1,j_1^*}|$ components of $\vei_{2,j_1^*}$.


We now describe the similarity maps in $\class{F}$. For each $|\vei|\le3$, the action of $\ve{f}_{\vei}$ on the polygonal region $U_0$ is illustrated in \cref{F:InitialSet,F:ImageSets}.
\begin{figure}[h]
    \centering
    \parbox[b]{2.6in}{\hspace{0pt}
    \scalebox{1.1}{\includegraphics[trim=155pt 520pt 300pt 125pt,clip]{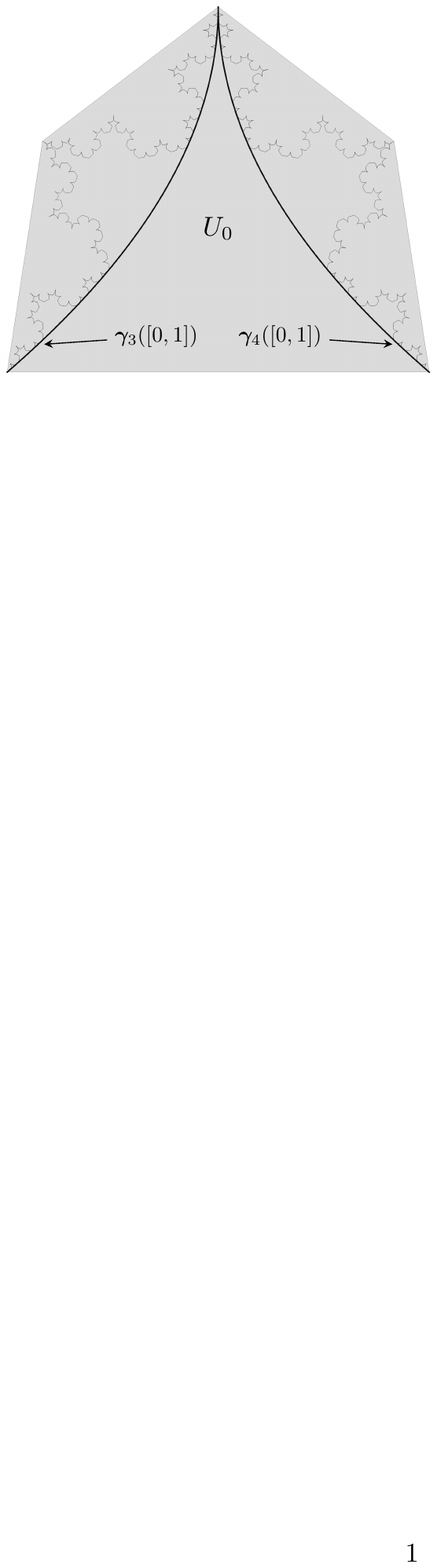}}
    \caption{Initial Set $U_0$}
    \label{F:InitialSet}}
    \hspace{20pt}
    \parbox[b]{2.6in}{
    \scalebox{1.1}{\includegraphics[trim=155pt 520pt 300pt 125pt,clip]{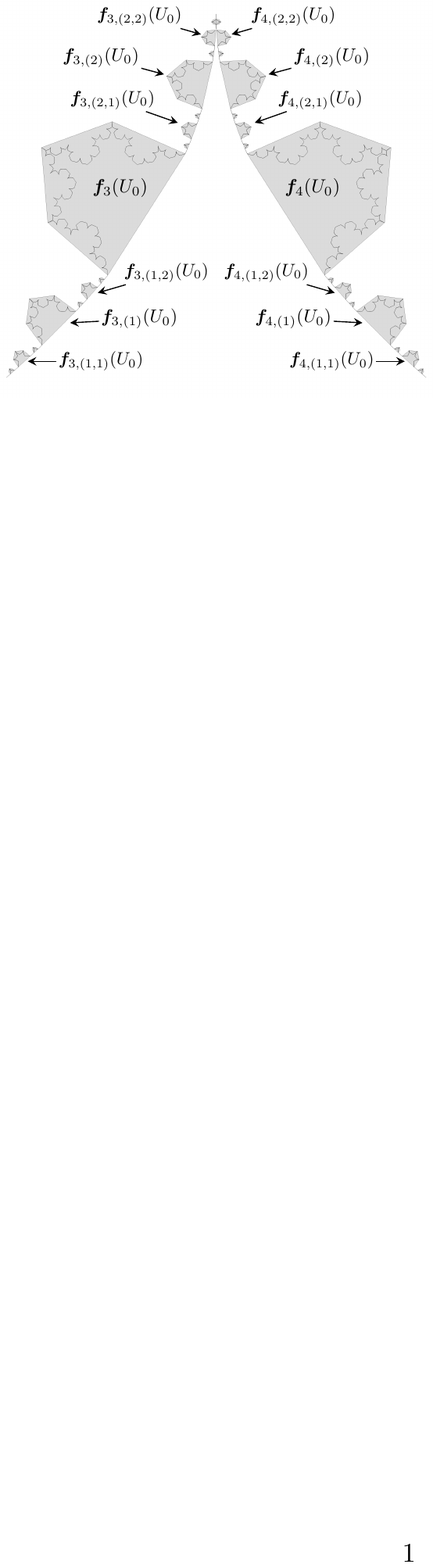}}
    \caption{Images of the Initial Set}
    \label{F:ImageSets}
    }
\end{figure}
To be more precise, let $\{I_{\vei''}\}_{\vei''\in I''}$ denote the open intervals that are removed during the construction of the middle-thirds Cantor set $\set{C}\subseteq[0,1]$. Given $\vei''\in\set{I}''$, the interval $I_{\vei''\dplus(1)}$ is the middle-third interval to the left of $I_{\vei''}$ and $I_{\vei''\dplus(2)}$ is the middle-third to the right, so for example,
\[
    I:=(\smfrac{1}{3},\smfrac{2}{3}),
    \:
    I_{(1)}=(\smfrac{1}{9},\smfrac{2}{9}),
    \:
    I_{(2)}=(\smfrac{7}{9},\smfrac{8}{9}),
    \:
    I_{(1,1)}=(\smfrac{1}{27},\smfrac{2}{27}),
    \:
    I_{(1,2)}=(\smfrac{7}{27},\smfrac{8}{27}),
    \:\text{etc..}
\]
Let $x_{\vei''}^-$ and $x_{\vei''}^+$ be the left and right endpoints of $I_{\vei''}$, respectively.
Suppose that the base of $U_0$ has unit length, is centered at $\ve{0}$, and is aligned with the first-coordinate axis. Further suppose that its height is at most $\sqrt{3}/2$. Let $\ve{\gamma}_{i'}\in\cnt([0,1];\re^2)$, with $i'\in\{3,4\}$, be parameterizations for the curves depicted in \cref{F:InitialSet} with the following property: for each $\vei''\in\set{I}''$, and $i''\in\{1,2\}$,
\[
    \|\ve{\gamma}_{i'}(x^+_{\vei''})-\ve{\gamma}_{i'}(x^-_{\vei''})\|_{\re^2}
    =
    3\|\ve{\gamma}_{i'}(x^+_{\vei''\dplus(i'')})
        -\ve{\gamma}_{i'}(x^-_{\vei''\dplus(i'')})\|_{\re^2},
    \frl i''\in\{1,2\}.
\]
We orient the curves so that $\ve{\gamma}_{i'}(1)$ corresponds to the point at the base of $U_0$ and $\ve{\gamma}_{i'}(0)$ is the cusp point. Put $L:=3\|\ve{\gamma}_{i'}(\frac{1}{3})-\ve{\gamma}_{i'}(\frac{2}{3})\|_{\re^2}<\frac{1}{2}\lp1+\sqrt{3}\rp$. Then
\[
    \|\ve{\gamma}_{i'}(x^+_{\vei''})-\ve{\gamma}_{i'}(x^-_{\vei''})\|_{\re^2}
    =\lp\smfrac{1}{3}\rp^{1+|\vei''|}L.
\]
The map $\ve{f}_{3,\vei''}$ is the unique orientation preserving similarity transform that maps the bottom right point $(\frac{1}{2},0)$ of $U_0$ to $\vx^-_{3,\vei''}:=\ve{\gamma}_3(x_{\vei''}^-)$ and maps the bottom left point $(-\frac{1}{2},0)$ to $\vx^+_{3,\vei''}:=\ve{\gamma}_3(x_{\vei''}^+)$. On the other hand, $\ve{f}_{4,\vei''}$ maps $(-\frac{1}{2},0)$ to $\vx^-_{4,\vei''}:=\ve{\gamma}_4(x_{\vei''}^-)$ and $(\frac{1}{2},0)$ to $\vx^+_{4,\vei''}:=\ve{\gamma}_4(x_{\vei''}^+)$. With $\vei=(i',\vei'')\in\set{I}$, the similarity ratio for $\ve{f}_{\vei}$ is $\sigma_{\vei}=\sigma_{i',\vei''}:=3^{-|\vei|}L<\frac{1}{6}(1+\sqrt{3})<\frac{1}{2}$.

The set $\bnd\subseteq\ov{U}_0$, considered in \cref{E:Domains}(\ref{E:Prickly}), is the unique compact set satisfying $\bnd=\ov{\bigcup_{\vei\in I}\ve{f}_{\vei}(\bnd)}$. Unlike finite iterated function systems, the closure is necessary~\cite{Fer:94a}, so $\bnd$ is not necessarily an invariant set for $\class{F}$. To establish the measure theoretic properties of $\bnd$ and take advantage of its self-similarity properties, we need to identify an invariant set $\bnd_0$ such that $\bnd\bs\bnd_0$ is a null set, with respect to an appropriate Hausdorff measure that will be determined later. Given $\vei^*\in\set{I}^*_j$ and $E\subseteq\ov{U}_0$, define
\[
    \ve{f}_{\vei_1^*}:=\ve{f}_{\vei_1}\circ\cdots\circ\ve{f}_{\vei_{j^*}},\quad
    E_{\vei^*}:=f_{\vei^*}(E),
    \nd
    \sigma_{\vei^*}:=\sigma_{\vei_1}\cdots\sigma_{\vei_{j^*}}=3^{-\|\vei^*\|}L^{-|\vei^*|}.
\]
If $E$ is closed and nonempty, then so is $E_{\vei^*}$. For convenience, we will use $U_{\vei^*}=U_{0,\vei^*}$. We  point out that $\class{F}$ satisfies the open set condition: for each $\vei_1,\vei_2\in\set{I}$, we find $U_{\vei_1},U_{\vei_2}\subseteq U_0$ and $U_{\vei_1}\cap U_{\vei_1}=\emptyset$ if $\vei_1\neq\vei_2$. More generally, if $\vei_1^*,\vei^*_2$, satisfies $\vei_2^*\big|_k\neq\vei_1^*\big|_k$, for some $1\le k\le\min\{|\vei_1^*|,|\vei_2^*|\}$, then $U_{\vei_2^*}\cap U_{\vei_1^*}=\emptyset$. Otherwise, either $U_{\vei_2^*}\subseteq U_{\vei_1^*}$ or $U_{\vei_1^*}\subseteq U_{\vei_2^*}$. We also observe that $E_{\vei^*}\subseteq E_{\vei^*\dplus\vei}$ and
\[
    \diam(E_{\vei^*\dplus\vei})
    =3^{-\|\vei^*\|-|\vei|}L^{1+|\vei^*|}\diam(E)
    <\lp\frac{2}{3}\rp^{|\vei^*|+|\vei|}\diam(E),
    \frl\vei\in\set{I}.
\]
Thus, for $\vei^*\in\set{I}^*_\infty$, the set $\ov{E}_{\vei^*}:=\bigcap_{j=1}^\infty\ov{E}_{\left.\vei^*\right|_j}$ is well-defined and a singleton. In~\cite{MauUrb:96a}, it is shown that $\bnd_0:=\bigcup_{\vei^*\in\set{I}^*_\infty}\ov{U}_{\vei^*}$ is an invariant set with respect to $\class{F}$; i.e.,
\[
    \bnd_0
    =\bigcup_{\vei\in\set{I}}\ve{f}_{\vei}(\bnd_0).
\]
We can now identify $\bnd\bs\bnd_0$. Set $\set{C}_{i'}:=\ve{\gamma}_{i'}(\set{C})$. Since $\set{C}_{i'}$ is closed and $\dist(\bnd_{i',\vei''},\set{C}_{i'})\to0$, as $|\vei''|\to\infty$, if $\{\vei''\}_{k=1}^\infty\subseteq\set{I}''$ consists of distinct indices, then $\lim_{k\to\infty}|\vei_k''|\to\infty$ and the limit points of any sequence $\{\vy_k\}_{k=1}^\infty$ satisfying $\vy_k\in\bnd_{i',\vei''_k}$ must belong to $\set{C}_{i'}$. Since each point in $\set{C}$ is a limit point of a sequence of endpoints of the intervals removed to generate $\set{C}$, we conclude that $\bnd\bs\bnd_0=\set{C}$. By Lemma~2.1 in~\cite{MauUrb:96a},
\[
    \bnd\bs\bnd_0=\bigcup_{i'\in\set{I}'}\lp\set{C}_{i'}\cup\bigcup_{\vei^*\in\set{I}^*}\ve{f}_{\vei^*}(\set{C}_{i'})\rp.
\]
Since $\haus^1(\set{C})=0$ and $\set{I}^*$ is countable, we conclude that $\haus^1(\bnd\bs\bnd_0)=0$. We also note that $\bnd_0$ is the set of so-called twist points for $\bnd$.


Next, we show that $\bnd_0$, and thus $\bnd$, has Hausdorff dimension $t>1$, and argue that its $\haus^t$-measure is finite. Since the system $\class{F}$ satisfies the open set condition, we may use a result from~\cite{Fer:94a} to conclude that the Hausdorff dimension for $\bnd_0$ is
\[
    t=\inf\lc\tau\in\re:\sum_{\vei\in\set{I}}
        \sigma^\tau_{\vei}\le 1\rc
    =\inf\lc\tau\in\re:L^\tau\sum_{j=1}^\infty
        \lp\frac{2}{3^\tau}\rp^j\le1\rc.
\]
In our case, the infimum is attained and is the unique solution to $2\lp L^t+1\rp=3^t$. The bounds for $L$ imply $t_1\le t\le t_2$, where $t_1:=(\ln4/\ln3)\approx1.26186$ and $t_2\approx1.49936$ is the unique solution to $2\lp(1+\sqrt{3})^{t_2}+2^{t_2}\rp=6^{t_2}$. Note, we could reduce the height of $U_0$, while leaving the base unchanged, so that $L=1$ and $t=(\ln4/\ln3)$. More generally, for any positive height, we find $L>\frac{1}{2}$ and $\bnd_0$ has Hausdorff dimension $t>1$. In any case, as $\haus^t(\bnd\bs\bnd_0)=0$, we conclude that, for all $E\in\class{B}(\ov{U}_0)\text{ and }\vei^*\in\set{I}^*$,
\[
    \haus^t(E\cap\bnd)=\haus^t(E\cap\bnd_0)
    \nd
    \haus^t(E\cap\bnd_{\vei^*})=\haus^t(E\cap\bnd_{0,\vei^*}).
\]
Thus, to establish Ahlfors-regularity for $\bnd$, it is sufficient prove it for $\bnd_0$.

To this end, we will take advantage of some results provided by~\cite{MauUrb:96a}. The class $\class{F}$ is clearly a conformal infinite iterated function system. The topological pressure, as defined in~\cite{MauUrb:96a}, is
\[
    \lim_{j\to\infty}\frac{1}{j}\ln\sum_{\vei^*\in\set{I}^*_j}\sigma^t_{\vei^*}
    =
    \lim_{j\to\infty}\frac{1}{j}
    \ln\lp\lp\sum_{\vei_1\in\set{I}}\sigma^t_{\vei_1}\rp
        \cdots\lp\sum_{\vei_j\in\set{I}}\sigma^t_{\vei_j}\rp\rp
    =
    \lim_{j\to\infty}\frac{\ln j}{j}=0.
\]
Therefore, there exists a $t$-conformal measure $m:\class{B}(\ov{U}_0)\to[0,1]$ on $\ov{U}_0$. That is, $m$ has the following properties: for each $E\in\class{B}(\ov{U}_0)$,
\begin{itemize}
    \item $m(E)=m(E\cap\bnd_0)$ and $m(\bnd_0)=1$;
    \item $m(E_{\vei})=m(\ve{f}_{\vei}(E))=\int_{E}\sigma_{\vei}^t\dd m=\sigma_{\vei}^tm(E)=L^t3^{-|\vei|t}m(E)$, for each $\vei\in\set{I}$;
    \item $m(\ov{U}_{\vei_1}\cap\ov{U}_{\vei_2})=0$ if and only if $\vei_1,\vei_2\in\set{I}$ satisfy $\vei_1\neq\vei_2$.
\end{itemize}
Lemma 4.2, in~\cite{MauUrb:96a}, implies the restriction of $\haus^t$ to $\bnd_0$, and thus $\bnd$, is absolutely continuous with respect to $m$ and finite. In fact, by inspecting the proof, we may deduce $\haus^t(\bnd)=\haus^t(\bnd_0)\le \diam(U_0)^t$. In any case, as we are only interested in the $\haus^t$-properties and $m$-properties of $\bnd$, throughout the remainder of this section, we need not distinguish between $\bnd_0$ and $\bnd$.

We now work to establish a positive lower bound for $\haus^t(\bnd)$. As stated earlier, a key idea is to group subcollections of $\set{I}^*$ in such a way that we may work with $\class{F}$ in a manner similar to an IFS (properties~(\ref{E:KochProp}) and~(\ref{E:UComp})) below). Our objective is to show that there exists a constant $0<c<\infty$ such that $m(E)\le c\diam(E)^t$, for all $E\in\class{B}(\ov{U}_0)$. Then, the Mass Distribution Principle~\cite{Fal:03a} delivers the lower bound $\haus^t(\bnd)\ge c^{-1}m(\bnd)=c^{-1}$. Iterating the properties for $m$ stated above yields
\begin{itemize}
    \item $m(E_{\vei^*})=m(\ve{f}_{\vei^*}(E)\cap\bnd_0)=\int_{E}\sigma_{\vei^*}^t\dd m=\sigma_{\vei^*}^tm(E)$, for all $E\in\class{B}(\ov{U}_0)$;
    \item $m(\ov{U}_{\vei^*_1}\cap\ov{U}_{\vei_2^*})=0$ if and only if $\vei_2^*\big|_k\neq\vei^*_1\big|_k$, for some $1\le k\le\min\{|\vei^*_1|,|\vei^*_2|\}$.
\end{itemize}
Put $r_0:=\sup\{r>0:\bll_r(\vx)\subseteq U_0\}>0$. Select $\vx_0\in U_0$ such that $\bll_{r_0}(\vx_0)\subseteq U_0$. For each $\vei^*\in\set{I}^*$,
\[
    \diam(U_{\vei^*})=\sigma_{\vei^*}D_0
    \nd
    \bll_{\sigma_{\vei^*}r_0}(\ve{f}_{\vei^*}(\vx_0))
    \subseteq
    U_{\vei^*}
    \subseteq
    \bll_{\sigma_{\vei^*}D_0}(\ve{f}_{\vei^*}(\vx_0)).
\]
Define $\wh{I}^*(\vei^*):=\lc\wh{\vei}^*\in\set{I}^*:\vei^*\le\wh{\vei}^*\rc$,
\[
    \wh{U}_{\vei^*}
    :=
    \bigcup_{\wh{\vei}^*\in\wh{\set{I}}^*(\vei^*)}U_{\wh{\vei}^*},
    \nd
    \wh{\bnd}_{\vei^*}
    :=
    \bigcup_{\wh{\vei}^*\in\wh{\set{I}}^*(\vei^*)}\bnd_{\wh{\vei}^*}.
\]
For convenience, given $\vei''\in\set{I}''$, we will use $\vei^*\dplus(i'')=(\vei_1,\dots,\vei_{|\vei^*|}\dplus(\vei''))\in\set{I}^*_{|\vei^*|}$. Recall that $\vei^*\dplus(i')=(\vei_1,\dots,\vei_{|\vei^*|},(i'))\in\set{I}^*_{1+|\vei^*|}$, for $i'\in\set{I}'$. In either case, $\|\vei^*\dplus(i)\|=1+\|\vei^*\|$. We then may write
\begin{enumerate}[(a)]
    \item\label{E:KochProp} $\wh{U}_{\vei^*}=\bigcup_{i=1}^4\wh{U}_{\vei^*\dplus i}$ and $\wh{\bnd}_{\vei^*}=\bigcup_{i=1}^4\wh{\bnd}_{\vei^*\dplus i}$;
    \item\label{E:URep} $\wh{U}_{\vei^*}=\bigcup_{\vei''\in\set{I}''}U_{\vei^*\dplus\vei''}$ and $\wh{\bnd}_{\vei^*}=\bigcup_{\vei''\in\set{I}''}\bnd_{\vei^*\dplus\vei''}$;
    \item\label{E:UComp} $\wh{U}_{\vei_2^*}\subseteq\wh{U}_{\vei_1^*}$ if and only if $\vei_1^*\le\vei_2^*$. Otherwise $\wh{U}_{\vei_1^*}\cap\wh{U}_{\vei_2^*}=\emptyset$;
    \item\label{E:UBnds} $\bll_{\sigma_{\vei^*}r_0}(\ve{f}_{\vei^*}(\vx_0))\subseteq\wh{U}_{\vei^*}\subseteq\bll_{3\sigma_{\vei^*}}D_0(\ve{f}_{\vei^*}(\vx_0))$.
\end{enumerate}
This last property follows from $U_{\vei^*}\subseteq\wh{U}_{\vei^*}$ and the observation that $\diam(\wh{D}_{\vei^*})$ cannot exceed the diameter of the set produced by arranging the sets $\{U_{\vei^*\dplus\vei''}\}_{\vei''\in\set{I}''}$ adjacent to each other so that their bases are adjoined along a common line, as would occur if $\bnd$ was the Koch snowflake curve. In this case
\[
    \diam(\wh{U}_{\vei^*})
    \le
    \sum_{\vei''\in\set{I}''}\diam(U_{\vei^*\dplus\vei''})
    =
    D_0\sum_{\vei''}\sigma_{\vei^*\dplus\vei''}
    =
    D_0\sigma_{\vei^*}\sum_{k=0}^\infty\lp\frac{2}{3}\rp^k=3\sigma_{\vei^*}D_0.
\]
Since each pair of sets in $\{\bnd_{\vei^*\dplus\vei''}\}_{\vei''\in\set{I}''}$ can only intersect in an $m$-null set, property~(\ref{E:URep}) implies
\begin{equation}\label{E:mMeasBnd}
    m(\wh{\bnd}_{\vei^*})
    =
    \sum_{\vei''\in\set{I}''}m(\bnd_{\vei^*\dplus\vei''})
    =
    m(\bnd)\sum_{\vei''\in\set{I}''}\sigma^t_{\vei^*\dplus\vei''}
    =
    \sigma^t_{\vei^*}\sum_{k=0}^\infty\lp\frac{2}{3^t}\rp^k
    =
    \lp\frac{3^t}{3^t-2}\rp\sigma^t_{\vei^*}.
\end{equation}
The scaling properties for $\haus^t$ similarly produce
\begin{equation}\label{E:HausBnd}
    \haus^t(\wh{\bnd}_{\vei^*})=\lp\frac{3^t}{3^t-2}\rp\sigma^t_{\vei^*}\haus^t(\bnd).
\end{equation}
Next, for $\vei^*=(\vei_1,\dots,\vei_{j^*})\in\set{I}^*$ satisfying $\|\vei^*\|>1$, put
\[
    \ov{\vei}^*:=\lc\begin{array}{ll}
        (\vei_1,\dots,\vei_{j^*-1})\dplus\vei_{j^*}\big|_{|\vei_{j^*}|-1}, &\text{ if }|\vei_{j^*}|>1,\\
        (\vei_1,\dots,\vei_{j^*-1}), &\text{ if }|\vei_{j^*}|=1,
    \end{array}\rt
\]
so $\|\ov{\vei}^*\|=\|\vei^*\|-1$. For each $0<\rho\le\frac{1}{3}$ and $\vx\in\ov{U}_0$, set
\[
    \set{I}^*_\rho
    :=
    \lc\vei^*\in\set{I}^*
        :\sigma_{\vei^*}D_0\le\rho<\sigma_{\ov{\vei}^*}D_0\rc
\]
and
\[
    \set{I}^*_\rho(\vx)
    :=
    \{\vei^*\in\set{I}_\rho^*:\wh{\bnd}_{\vei^*}\cap\bll_\rho(\vx)\neq\emptyset\}.
\]
For any $\vei^*\in\set{I}^*$, we have $\bnd_{\vei^*}\subseteq\wh{\bnd}_{\vei^*}\subseteq\wh{\bnd}_{\ov{\vei}^*}$ and $\sigma_{\ov{\vei}^*}\ge 3\sigma_{\vei^*}$. Thus, for each $\vx\in\bnd$ and $\vei_1^*\in\set{I}^*$, satisfying $\sigma_{\vei_1^*}D_0\le\rho$ and $\bnd_{\vei_1^*}\cap\bll_\rho(\vx)\neq\emptyset$, there exists a $\vei_2^*\in\set{I}^*_\rho(\vx)$, possibly equal to $\vei_1^*$, such that $\bnd_{\vei_1^*}\subseteq\wh{\bnd}_{\vei_2^*}$. This implies
\[
    \bnd\cap\bll_\rho(\vx)
    \subseteq
    \ov{\bigcup_{\vei^*\in\set{I}^*_\rho(\vx)}\wh{\bnd}_{\vei^*}}
    \subseteq
    (\bnd\bs\bnd_0)\cup\bigcup_{\vei^*\in\set{I}^*_\rho(\vx)}\wh{\bnd}_{\vei^*},
\]
and thus,
\begin{equation}\label{E:mBndNeighb}
    m(\bnd\cap\bll_\rho(\vx))
    \le
    \sum_{\vei^*\in\set{I}^*_\rho(\vx)}m(\wh{\bnd}_{\vei^*}).
\end{equation}
With $E\in\class{B}(\ov{U}_0)$, we now show that $m(E)\le c\diam(E)^t$, with $c$ independent of $E$. We may assume that $0<\diam(E)<\frac{1}{6}$, so there exists $\vx\in\bnd$ and $0<\rho\le2\diam(E)\le\frac{1}{3}$ such that $E\cap\bnd\subseteq\bnd\cap\bll_\rho(\vx)$. By property~(\ref{E:UBnds}), for each $\vei^*\in\set{I}^*_\rho(\vx)$, we must have
\[
    |\wh{U}_{\vei^*}|\ge(\sigma_{\vei^*}r_0)^n|\bll_1|
    \ge\lp\frac{1}{3L}\sigma_{\ov{\vei}^*}r_0\rp^n|\bll_1|
    \ge\lp\frac{r_0}{3LD_0}\rp^n|\bll_\rho|
\]
and $\wh{U}_{\vei^*}\subseteq\bll_{3\rho}(\vx)$. By property~(\ref{E:UComp}) and the definition of $\set{I}^*_\rho$, the family $\{\wh{U}_{\vei^*}\}_{\vei^*\in\set{I}^*_\rho(\vx)}$ must consist of mutually disjoint sets. Consequently, there is an upper bound on the number of elements in $\set{I}^*_\rho(\vx)$. In fact, we conclude that
\[
    \card(\set{I}^*_\rho(\vx))
    \le
    \sup_{\vei^*\in\set{I}^*_\rho(\vx)}\lp\frac{|\bll_{3\rho}|}{|\wh{U}_{\vei^*}|}\rp
    \le
    \lp\frac{9LD_0}{r_0}\rp^n.
\]
Using~\eqref{E:mMeasBnd} and~\eqref{E:mBndNeighb}, we obtain
\begin{align}
\nonumber
    m(\bll_\rho(\vx))
    =&
    m(\bll_\rho(\vx)\cap\bnd)
    \le
    \lp\frac{9LD_0}{r_0}\rp^n\max_{\vei^*\in\set{I}^*_\rho(\vx)}m(\wh{\bnd}_{\vei^*})\\
\label{E:UppRegMass}
    \le&
    m(\bnd)\lp\frac{9LD_0}{r_0}\rp^n\!\lp\frac{3^t}{3^t-2}\rp
        \max_{\vei^*\in\set{I}^*_\rho(\vx)}\!\sigma^t_{\vei^*}
    \le
    \lp\frac{9LD_0}{r_0}\rp^n\!\lp\frac{3^t}{3^t-2}\rp\lp\frac{\rho}{D_0}\rp^t.
\end{align}
Since $\rho^t\le2^t\diam(E)^t$, the Mass Distribution Principle~\cite{Fal:03a} implies
\[
    \haus^t(\bnd)\ge
    \lp\frac{3^t-2}{3^t}\rp\lp\frac{r_0}{9LD_0}\rp^n\lp\frac{D_0}{2}\rp^t.
\]
We have thus far shown that $0<\haus^t(\bnd)<\infty$.


Next, we work towards establishing the Ahlfors-regulartiy for $\bnd$. First, with $m$ replaced by $\haus^t$, we can incorporate~\eqref{E:HausBnd} into the exact same argument that established~\eqref{E:UppRegMass} to prove the upper Ahlfors-regularity of $\bnd$. Thus, we need only show lower Ahlfors-regularity. Let $\vx\in\bnd$ and $0<\rho\le\frac{1}{3}$ be given. For each $k\in\nats$, there exists $\vei^*_k\in\set{I}^*$ such that $\|\vei_k^*\|=k$ and $\vx\in\ov{\wh{\bnd}}_{\vei_k^*}\subseteq\ov{\wh{U}}_{\vei_k^*}$. Since $\lim_{k\to\infty}\sigma_{\vei^*_k}=0$, we may select $\vei^*\in\set{I}^*$ such that $\vx\in\ov{\wh{\bnd}}_{\vei^*}$ and $3\sigma_{\vei^*}D_0\le\frac{1}{2}\rho<3\sigma_{\ov{\vei}^*}D_0$. By property~(\ref{E:UBnds}), $\diam(\ov{\wh{\bnd}}_{\vei^*})\le\frac{1}{2}\rho$, and therefore, $\ov{\wh{\bnd}}_{\vei^*}\subseteq\bll_\rho(\vx)\cap\bnd$. By~\eqref{E:HausBnd},
\begin{align*}
    \haus^t(\bll_\rho(\vx)\cap\bnd)\ge\haus^t(\wh{\bnd}_{\vei^*})
    =&
    \lp\frac{3^t}{3^t-2}\rp\sigma^t_{\vei^*}\haus^t(\bnd)
    \ge
    \lp\frac{3^t}{3^t-2}\rp\lp\frac{\sigma_{\ov{\vei}^*}}{3L}\rp^t\haus^t(\bnd)\\
    \ge&
    \lp\frac{1}{6D_0L}\rp^t\lp\frac{1}{3^t-2}\rp\haus^t(\bnd)\rho^t.
\end{align*}
This concludes the proof.

Finally, we discuss the other claims made in \cref{E:Domains}(\ref{E:Prickly}). The domain $\dom$ is bounded by $\bnd$ and the line segment joining the points $(\pm\frac{1}{2},0)$. Let $T_0$ be the open subset of $\dom$ bounded by the curves $\ve{\gamma}_3([0,1])$ and $\ve{\gamma}_4([0,1])$. For some $\theta_0\ge 1$, this wedge-shaped region is congruent to $\dom^{\theta_0}$, as defined in \cref{E:Domains}(\ref{E:Proto}). It is clear that $\dom$ satisfies hypotheses (H1) and (H2) with $\theta_\bnd\equiv\theta_0$. Put $\ovy_0:=\ve{\gamma}_3(0)=\ve{\gamma}_4(0)\in\partial T_0$. There is, however, no approach region for the cusp point $\ovy_0$ that has both the $(\eta,\theta)$-corkscrew and the $(C,\theta)$-connectedness properties, for any $0<\eta<1\le\theta<\theta_0$ and $C\ge 1$. If there were, then we could use the symmetry of $\dom$ and an argument similar to the one used for \cref{L:Hypotheses}(\ref{L:ThinCorkscrew}), to conclude that the set $T_0$ must be a $(\eta,\theta)$-corkscrew region for $\ovy_0$, which is a contradiction. By the self-similarity of $\bnd$, this must also true for each point in $\ovy\in\Lambda:=\{\ovy_0\}\cup\bigcup_{\vei^*\in\set{I}^*}\ve{f}_{\vei^*}(\ovy_0)$. One easily sees that given any $\ovx\in\bnd$ and $\rho>0$, there exists $\ovy\in\bnd\cap\Lambda\cap\bll_\rho(\ovx)$. This implies $\dom_\rho(\ovx)$ fails to satisfy (H1) and (H2), for any $1\le\theta<\theta_0$, and thus, in particular, there is no $1$-sided NTA neighborhood of any $\ovx\in\bnd$ (see~\cref{R:HypConn}(\ref{R:H2Rem})).  We similarly conclude that there are no locally uniform neighborhoods of points in $\bnd$.

\bibliographystyle{spmpsci}      

\begin{thebibliography}{10}
\providecommand{\url}[1]{{#1}}
\providecommand{\urlprefix}{URL }
\expandafter\ifx\csname urlstyle\endcsname\relax
  \providecommand{\doi}[1]{DOI~\discretionary{}{}{}#1}\else
  \providecommand{\doi}{DOI~\discretionary{}{}{}\begingroup
  \urlstyle{rm}\Url}\fi

\bibitem{AchSabTch:06a}
Achdou, Y., Sabot, C., Tchou, N.: Diffusion and propagation problems in some
  ramified domains with a fractal boundary.
\newblock ESAIM: Mathematical Modelling and Numerical Analysis \textbf{40}(4),
  623--652 (2006).
\newblock \doi{10.1051/m2an:2006027}.
\newblock \urlprefix\url{http://www.esaim-m2an.org/10.1051/m2an:2006027}

\bibitem{AchTch:10a}
Achdou, Y., Tchou, N.: Trace theorems for a class of ramified domains with
  self-similar fractal boundaries.
\newblock SIAM Journal on Mathematical Analysis \textbf{42}(4), 1449--1482
  (2010).
\newblock \doi{10.1137/090747294}.
\newblock \urlprefix\url{http://epubs.siam.org/doi/10.1137/090747294}

\bibitem{AndMazRos:10a}
Andreu-Vaillo, F., Maz\'on, J.M., Rossi, J.D., Toledo-Melero, J.J.: Nonlocal
  diffusion problems, \emph{Mathematical Surveys and Monographs}, vol. 165.
\newblock American Mathematical Society, Providence, RI; Real Sociedad
  Matem\'atica Espa\~nola, Madrid (2010).
\newblock \doi{10.1090/surv/165}.
\newblock \urlprefix\url{https://doi-org.libproxy.unl.edu/10.1090/surv/165}

\bibitem{AzzMar:17a}
Azzam, J., Hofmann, S., Martell, J.M., Nystr\"{o}m, K., Toro, T.: A new
  characterization of chord-arc domains.
\newblock J. Eur. Math. Soc. (JEMS) \textbf{19}(4), 967--981 (2017).
\newblock \doi{10.4171/JEMS/685}.
\newblock \urlprefix\url{https://doi-org.libproxy.unl.edu/10.4171/JEMS/685}

\bibitem{BatHan:05a}
Bates, P.W., Han, J.: The {Neumann} boundary problem for a nonlocal
  {Cahn}–{Hilliard} equation.
\newblock Journal of Differential Equations \textbf{212}(2), 235--277 (2005).
\newblock \doi{10.1016/j.jde.2004.07.003}.
\newblock
  \urlprefix\url{https://linkinghub.elsevier.com/retrieve/pii/S0022039604002669}

\bibitem{Bre:02a}
Brezis, H.: How to recognize constant functions. {Connections} with {Sobolev}
  spaces.
\newblock Russian Mathematical Surveys \textbf{57}(4), 693--708 (2002).
\newblock \doi{10.1070/RM2002v057n04ABEH000533}.
\newblock
  \urlprefix\url{http://stacks.iop.org/0036-0279/57/i=4/a=R02?key=crossref.0ac4b146e61752823da0889db473fb68}

\bibitem{Cap:13a}
Capitanelli, R., Lancia, M.R., Vivaldi, M.A.: Insulating layers of fractal
  type.
\newblock Differential and Integral Equations. An International Journal for
  Theory \& Applications \textbf{26}(9-10), 1055--1076 (2013)

\bibitem{CovDup:07a}
Coville, J., Dupaigne, L.: On a non-local equation arising in population
  dynamics.
\newblock Proc. Roy. Soc. Edinburgh Sect. A \textbf{137}(4), 727--755 (2007).
\newblock \doi{10.1017/S0308210504000721}.
\newblock
  \urlprefix\url{https://doi-org.libproxy.unl.edu/10.1017/S0308210504000721}

\bibitem{DavFenMay:19a}
David, G., Feneuil, J., Mayboroda, S.: A new elliptic measure on lower
  dimensional sets.
\newblock Acta Mathematica Sinica, English Series \textbf{35}(6), 876--902
  (2019).
\newblock \doi{10.1007/s10114-019-9001-5}.
\newblock \urlprefix\url{http://link.springer.com/10.1007/s10114-019-9001-5}

\bibitem{MayGuyFen:18a}
David, G.R., Feneuil, J., Mayboroda, S.: Elliptic theory for sets with higher
  co-dimensional boundaries.
\newblock arXiv preprint arXiv:1702.05503  (2017)

\bibitem{MarMar:08a}
De~Marco, G., Mariconda, C., Solimini, S.: An elementary proof of a
  characterization of constant functions.
\newblock Advanced Nonlinear Studies \textbf{8}(3), 597--602 (2008).
\newblock \doi{10.1515/ans-2008-0306}.
\newblock
  \urlprefix\url{https://doi-org.libproxy.unl.edu/10.1515/ans-2008-0306}

\bibitem{DelLiSel:19a}
D'Elia, M., Li, X., Seleson, P., Tian, X., Yu, Y.: A review of
  local-to-nonlocal coupling methods in nonlocal diffusion and nonlocal
  mechanics.
\newblock arXiv:1912.06668 [math]  (2019).
\newblock \urlprefix\url{http://arxiv.org/abs/1912.06668}.
\newblock ArXiv: 1912.06668

\bibitem{Fau:95a}
Di~Biase, F.: Approach regions and maximal functions in theorems of Fatou type.
\newblock ProQuest LLC, Ann Arbor, MI (1995).
\newblock Thesis (Ph.D.)--Washington University in St. Louis

\bibitem{DieHar:11a}
Diening, L., Harjulehto, P., Hästö, P., Ruzicka, M.: Lebesgue and {Sobolev}
  Spaces with Variable Exponents, \emph{Lecture {Notes} in {Mathematics}}, vol.
  2017.
\newblock Springer Berlin Heidelberg, Berlin, Heidelberg (2011).
\newblock \doi{10.1007/978-3-642-18363-8}.
\newblock \urlprefix\url{http://link.springer.com/10.1007/978-3-642-18363-8}

\bibitem{Du:19a}
Du, Q.: Nonlocal modeling, analysis, and computation, \emph{CBMS-NSF Regional
  Conference Series in Applied Mathematics}, vol.~94.
\newblock Society for Industrial and Applied Mathematics (SIAM), Philadelphia,
  PA (2019).
\newblock \doi{10.1137/1.9781611975628.ch1}.
\newblock
  \urlprefix\url{https://doi-org.libproxy.unl.edu/10.1137/1.9781611975628.ch1}

\bibitem{DuGunLeh:11a}
Du, Q., Gunzburger, M., Lehoucq, R.B., Zhou, K.: A nonlocal vector calculus,
  nonlocal volume-constrained problems, and nonlocal balance laws.
\newblock Mathematical Models and Methods in Applied Sciences \textbf{23}(03),
  493--540 (2013)

\bibitem{DuMenTia:20a}
Du, Q., Mengesha, T., Tian, X.: Fractional hardy-type and trace theorems for a
  function space of nonlocal character.
\newblock preprint

\bibitem{DuzGasMin:04a}
Duzaar, F., Gastel, A., Mingione, G.: Elliptic systems, singular sets and
  {D}ini continuity.
\newblock Comm. Partial Differential Equations \textbf{29}(7-8), 1215--1240
  (2004).
\newblock \doi{10.1081/PDE-200033734}.
\newblock
  \urlprefix\url{https://doi-org.libproxy.unl.edu/10.1081/PDE-200033734}

\bibitem{Gag:57a}
{Emilio Gagliardo}: Caratterizzazioni delle tracce sulla frontiera relative ad
  alcune classi di funzioni in {$n$} variabili.
\newblock Rendiconti del Seminario Matematico della Università di Padova
  \textbf{27}, 284--305 (1957)

\bibitem{Fal:03a}
Falconer, K.: Fractal geometry, second edn.
\newblock John Wiley \& Sons, Inc., Hoboken, NJ (2003).
\newblock \doi{10.1002/0470013850}.
\newblock \urlprefix\url{https://doi-org.libproxy.unl.edu/10.1002/0470013850}.
\newblock Mathematical foundations and applications

\bibitem{FenMayZha:18a}
Feneuil, J., Mayboroda, S., Zhao, Z.: Dirichlet problem in domains with lower
  dimensional boundaries.
\newblock arXiv preprint arXiv:1810.06805 p.~84 (2018)

\bibitem{Fer:94a}
Fernau, H.: Infinite iterated function systems.
\newblock Mathematische Nachrichten \textbf{170}, 79--91 (1994).
\newblock \doi{10.1002/mana.19941700107}.
\newblock
  \urlprefix\url{https://doi-org.libproxy.unl.edu/10.1002/mana.19941700107}

\bibitem{GalGioGras:17a}
Gal, C.G., Giorgini, A., Grasselli, M.: The nonlocal {Cahn}–{Hilliard}
  equation with singular potential: Well-posedness, regularity and strict
  separation property.
\newblock Journal of Differential Equations \textbf{263}(9), 5253--5297 (2017).
\newblock \doi{10.1016/j.jde.2017.06.015}.
\newblock
  \urlprefix\url{https://linkinghub.elsevier.com/retrieve/pii/S0022039617303224}

\bibitem{GilOsh:09a}
Gilboa, G., Osher, S.: Nonlocal operators with applications to image
  processing.
\newblock Multiscale Modeling \& Simulation \textbf{7}(3), 1005--1028 (2009).
\newblock \doi{10.1137/070698592}.
\newblock \urlprefix\url{http://epubs.siam.org/doi/10.1137/070698592}

\bibitem{Giu:69a}
Giusti, E.: Precisazione delle funzioni di {$H^{1, p}$} e singolarit\`a delle
  soluzioni deboli di sistemi ellittici non lineari.
\newblock Boll. Un. Mat. Ital. (4) \textbf{2}, 71--76 (1969)

\bibitem{Hu:03a}
Hu, J.: A note on {Hajłasz}–{Sobolev} spaces on fractals.
\newblock Journal of Mathematical Analysis and Applications \textbf{280}(1),
  91--101 (2003).
\newblock \doi{10.1016/S0022-247X(03)00039-8}.
\newblock
  \urlprefix\url{https://linkinghub.elsevier.com/retrieve/pii/S0022247X03000398}

\bibitem{JerKen:82a}
Jerison, D.S., Kenig, C.E.: Boundary behavior of harmonic functions in
  nontangentially accessible domains.
\newblock Adv. in Math. \textbf{46}(1), 80--147 (1982).
\newblock \doi{10.1016/0001-8708(82)90055-X}.
\newblock
  \urlprefix\url{https://doi-org.libproxy.unl.edu/10.1016/0001-8708(82)90055-X}

\bibitem{Jon:80a}
Jones, P.W.: Extension theorems for {BMO}.
\newblock Indiana University Mathematics Journal \textbf{29}(1), 41--66 (1980).
\newblock \doi{10.1512/iumj.1980.29.29005}.
\newblock
  \urlprefix\url{https://doi-org.libproxy.unl.edu/10.1512/iumj.1980.29.29005}

\bibitem{Jon:81a}
Jones, P.W.: Quasiconformal mappings and extendability of functions in sobolev
  spaces.
\newblock Acta Mathematica \textbf{147}(0), 71--88 (1981).
\newblock \doi{10.1007/BF02392869}.
\newblock \urlprefix\url{http://projecteuclid.org/euclid.acta/1485890130}

\bibitem{Jon:94a}
Jonsson, A.: Besov spaces on closed subsets of {$\mathbb{R}^n$}.
\newblock Transactions of the American Mathematical Society \textbf{341}(1),
  355 (1994).
\newblock \doi{10.2307/2154626}.
\newblock \urlprefix\url{https://www.jstor.org/stable/2154626?origin=crossref}

\bibitem{JonWal:84a}
Jonsson, A., Wallin, H.: Function spaces on subsets of {${\bf R}^n$}.
\newblock Math. Rep. \textbf{2}(1), xiv+221 (1984)

\bibitem{KatFoi:10a}
Katkovnik, V., Foi, A., Egiazarian, K., Astola, J.: From local kernel to
  nonlocal multiple-model image denoising.
\newblock International Journal of Computer Vision \textbf{86}(1), 1--32
  (2010).
\newblock \doi{10.1007/s11263-009-0272-7}.
\newblock \urlprefix\url{http://link.springer.com/10.1007/s11263-009-0272-7}

\bibitem{Mar:18a}
Marcos, M.A.: A trace theorem for {Besov} functions in spaces of homogeneous
  type.
\newblock Publicacions Matemàtiques \textbf{62}, 185--211 (2018).
\newblock \doi{10.5565/PUBLMAT6211810}.
\newblock
  \urlprefix\url{http://mat.uab.cat/pubmat/articles/view_doi/10.5565/PUBLMAT6211810}

\bibitem{Mar:87a}
Marschall, J.: The trace of {Sobolev}-{Slobodeckij} spaces on {Lipschitz}
  domains.
\newblock Manuscripta Mathematica \textbf{58}(1-2), 47--65 (1987).
\newblock \doi{10.1007/BF01169082}.
\newblock \urlprefix\url{http://link.springer.com/10.1007/BF01169082}

\bibitem{MarSar:79a}
Martio, O., Sarvas, J.: Injectivity theorems in plane and space.
\newblock Annales Academiae Scientiarum Fennicae Series A I Mathematica
  \textbf{4}, 383--401 (1979).
\newblock \doi{10.5186/aasfm.1978-79.0413}.
\newblock
  \urlprefix\url{http://www.acadsci.fi/mathematica/Vol04/vol04pp383-401.pdf}

\bibitem{MauUrb:96a}
Mauldin, R.D., Urbański, M.: Dimensions and measures in infinite iterated
  function systems.
\newblock Proceedings of the London Mathematical Society \textbf{s3-73}(1),
  105--154 (1996).
\newblock \doi{10.1112/plms/s3-73.1.105}.
\newblock \urlprefix\url{http://doi.wiley.com/10.1112/plms/s3-73.1.105}

\bibitem{MayZih:19a}
Mayboroda, S., Zhao, Z.: Square function estimates, the {BMO} {Dirichlet}
  problem, and absolute continuity of harmonic measure on lower-dimensional
  sets.
\newblock Analysis \& PDE \textbf{12}(7), 1843--1890 (2019).
\newblock \doi{10.2140/apde.2019.12.1843}.
\newblock \urlprefix\url{https://msp.org/apde/2019/12-7/p06.xhtml}

\bibitem{Ros:16a}
Ros-Oton, X.: Nonlocal elliptic equations in bounded domains: a survey.
\newblock Publicacions Matemàtiques \textbf{60}, 3--26 (2016).
\newblock \doi{10.5565/PUBLMAT_60116_01}.
\newblock
  \urlprefix\url{http://mat.uab.cat/pubmat/articles/view_doi/10.5565/PUBLMAT_60116_01}

\bibitem{SakSot:17a}
Saksman, E., Soto, T.: Traces of {Besov}, {Triebel}-{Lizorkin} and {Sobolev}
  spaces on metric spaces.
\newblock Analysis and Geometry in Metric Spaces \textbf{5}(1), 98--115 (2017).
\newblock \doi{10.1515/agms-2017-0006}.
\newblock
  \urlprefix\url{http://content.sciendo.com/view/journals/agms/5/1/article-p98.xml}

\bibitem{ShyTad:19a}
Shvydkoy, R., Tadmor, E.: Topological models for emergent dynamics with
  short-range interactions.
\newblock arXiv:1806.01371 [math]  (2019).
\newblock \urlprefix\url{http://arxiv.org/abs/1806.01371}.
\newblock ArXiv: 1806.01371

\bibitem{Sil:00a}
Silling, S.A.: Reformulation of elasticity theory for discontinuities and
  long-range forces.
\newblock J. Mech. Phys. Solids p.~35 (2000)

\bibitem{Sil:18a}
Silling, S.A.: Peridynamics: {Introduction}.
\newblock In: G.Z. Voyiadjis (ed.) Handbook of {Nonlocal} {Continuum}
  {Mechanics} for {Materials} and {Structures}, pp. 1--38. Springer
  International Publishing, Cham (2018).
\newblock \doi{10.1007/978-3-319-22977-5_29-1}.
\newblock
  \urlprefix\url{http://link.springer.com/10.1007/978-3-319-22977-5_29-1}

\bibitem{TaoTiaDu:19a}
Tao, Y., Tian, X., Du, Q.: Nonlocal models with heterogeneous localization and
  their application to seamless local-nonlocal coupling.
\newblock Multiscale Modeling \& Simulation \textbf{17}(3), 1052--1075 (2019).
\newblock \doi{10.1137/18M1184576}.
\newblock \urlprefix\url{https://epubs.siam.org/doi/10.1137/18M1184576}

\bibitem{TiaDu:17a}
Tian, X., Du, Q.: Trace theorems for some nonlocal function spaces with
  heterogeneous localization.
\newblock SIAM J. Math. Anal. \textbf{49}(2), 1621--1644 (2017).
\newblock \doi{10.1137/16M1078811}.
\newblock \urlprefix\url{https://doi-org.libproxy.unl.edu/10.1137/16M1078811}

\end{thebibliography}

\end{document}